\newcommand{\T}{\ensuremath{\mathcal T}}
\let \Re \relax
\DeclareMathOperator{\Re}{Re}
\let \Im \relax
\DeclareMathOperator{\Im}{Im}
\newcommand{\Con}{\ensuremath{\mathscr C}}
\newcommand{\Cinf}{\ensuremath{\Con^\infty}}
\renewcommand{\S}{\ensuremath{\mathscr S}}
\DeclareMathOperator{\supp}{supp}
\newcommand{\hf}{\frac{1}{2}}
\newcommand{\est}[1]{\langle #1 \rangle}
\newcommand{\mb}[1]{\ensuremath{\mathbb{#1}}}
\newcommand{\N}{{\mb{N}}}
\newcommand{\R}{{\mb{R}}}
\newcommand{\C}{{\mb{C}}}
\newcommand{\al}{\alpha}
\newcommand{\be}{\beta}
\newcommand{\de}{\delta}
\newcommand{\eps}{\varepsilon}
\renewcommand{\d}{\ensuremath{\partial}}
\DeclareMathOperator{\op}{op}
\DeclareMathOperator{\Op}{Op}
\newcommand{\inp}[2]{\langle #1, #2 \rangle}
\def\iint{\intop\!\!\intop}
\newcommand{\nhd}{neighborhood\xspace}
\newcommand{\este}[1]{\langle #1 \rangle_\eps}
\def\VP{\varphi}
\newcommand{\tendvers}[2]{\xrightarrow[#1\rightarrow #2]{}}
\newtheorem{lemma}{Lemma}[section]
\newtheorem{theorem}{Theorem}
\newtheorem{proposition}[lemma]{Proposition}
\newtheorem{corollary}[lemma]{Corollary}
\theoremstyle{definition}
\newtheorem{remark}{Remark}
\def\keywords{
    \vspace{1ex}
    \noindent
    \if@twocolumn
      \small{\bf  Keywords}\/---$\!$    \else
      \begin{center}\small\ {\bf Keywords}\end{center}\quotation\small
    \fi}
\def\endkeywords{\vspace{0.6em}\par\if@twocolumn\else\endquotation\fi
    \normalsize\rm}
\begin{document}
\title{Carleman estimates for the Zaremba Boundary Condition and Stabilization of Waves.}

\author{Pierre Cornilleau\thanks{Teacher at Lyc\'ee du Parc des Loges, 1, boulevard des Champs-\'Elys\'ees, 91012 \'Evry, France.\newline e-mail: pierre.cornilleau@ens-lyon.org.}  \& Luc Robbiano\thanks{Laboratoire de Math\'ematiques de Versailles, Universit\'e de Versailles St Quentin,
CNRS, 45, Avenue des Etats-Unis, 78035 Versailles, France. e-mail : Luc.Robbiano@uvsq.fr}}

\maketitle
\begin{abstract}
In this paper, we shall prove a Carleman estimate for the so-called Zaremba problem.
Using some techniques of interpolation and spectral estimates, we deduce a result of stabilization for the wave equation by means 
of a linear Neumann feedback on the boundary. This extends previous results from the literature: 
indeed, our logarithmic decay result is obtained
while the part where the feedback is applied contacts the boundary zone driven by an homogeneous Dirichlet condition.
We also derive a controllability result for the heat equation with the Zaremba boundary condition.
\end{abstract}

\begin{keywords}
  \noindent Carleman estimates, Stabilization of Waves, Zaremba problem, pseudo--differential calculus, controllability. 
\end{keywords}

\tableofcontents

%
%

\section{Introduction}

\subsection{General background}

We are interested here in the stabilization of the wave equation on a bounded
connected regular open set of $\R^d$. Our stabilization will be obtained by means of a
feedback on a part of the boundary while the other part of the boundary is
submitted to an homogeneous Dirichlet condition.

\noindent Since the works of Bardos, Lebeau, Rauch (see \cite{BLR1}), the case of
stabilization for the wave equation is well understood (by the so called
{\em{Geometric Control Condition}}) for Dirichlet or Neumann boundary condition. Indeed, if the part of the
boundary driven by the homogeneous Dirichlet condition does not contact the
region where the feedback is applied, Lebeau has given a sharp sufficient condition for exponential stabilization
 of the wave equation (see \cite[Th\'eor\`eme 3]{Leb} and \cite{Lebeau:96}). 
Moreover, Lebeau and Robbiano (see \cite{LR2}) have shown that, in the case where the Neumann boundary condition is applied on the
entire boundary, a weak condition on the feedback (which does not satisfy {\em{Geometric Control Condition}}) provides logarithmic decay of regular solutions. 
\newline \noindent On the other hand, multiplier
techniques (see \cite{KZ,CLO}) give some results of exponential stabilization (even if the part of the
boundary driven by the homogeneous Dirichlet condition touches the region where the feedback is applied)
but under very strong assumptions on the form of the boundary conditions.

\noindent Our goal here is to obtain some stabilization of logarithmic type under weak
assumptions for the boundary conditions. More precisely, we will see that, for solutions driven by an homogeneous Dirichlet boundary condition on 
a part of the boundary and submitted to a feeback of the form $$\partial_{\nu} u = - a (x) \partial_t u$$ on the other part of the boundary,
where $a$ is some non-trivial non-negative function, their energy with initial data in the domain of $\mathcal{\mathcal{A} }^k$
(denoting $\mathcal{\mathcal{A} }$ the infinitesimal generator of our evolution equation) decays like $\ln (t)^{-
k}$ when $t$ goes to infinity.

To this end, we will need some Carleman estimates for the so-called Zaremba Boundary Problem
\begin{equation*}
  \left\{ \begin{array}{l}
\Delta_X u = f\\
    u = f_0\\
    \partial_{\nu} u = f_1\\
  \end{array} \begin{array}{l}
    \text{ in } X, \\
    \text{ on } \partial X_D,\\
    \text{ on } \partial X_N,\\
  \end{array} \right.
\end{equation*}
where $X$ is some regular manifold with boundary $\partial X$ splitted into $\partial X_D$ and $\partial X_N$ and normal vectorfield $\nu$. 
However, we will mainly tackle some local problem and the following model case (in $\R^n_+$ with the flat metric)
\begin{equation*}
  \left\{ \begin{array}{l}
\Delta u = f\\
    u = f_0\\
    \partial_{x_n} u = f_1\\
  \end{array} \begin{array}{l}
    \text{ in } \{x_n>0\}, \\
    \text{ on } \{x_n=0, x_1>0\},\\
    \text{ on } \{x_n=0, x_1<0\},\\
  \end{array} \right.
\end{equation*}
 should help the reader to understand the main difficulties of this problem.
\newline \noindent The Zaremba problem lies in the large class of boundary pseudodifferential operators, studied by many authors.
The first one was probably Eskin (see the monograph \cite{Esk} where pseudodifferential elliptic boundary problems are studied) but then Boutet de Monvel
- in \cite{BdeM} - raised the fundamental {\it transmission condition}. It was shown to play a key role in the resolution of such problems
(see the books of Grubb \cite{Grubb} and \cite[Chapter 10]{Grubb2} where the algebra of pseudodifferential problems is studied in details). 
\newline \noindent Unfortunately, the Zaremba problem can not be solved by this pseudodifferential calculus. Indeed, its resolution involves 
a pseudodifferential operator on the boundary that does not satisfy the transmission condition (see \cite{HS}).
It lies in the general class of operators introduced by Rempel and Schulze in \cite{RS} which allow to construct a parametrix for mixed elliptic problems
- including the Zaremba problem (see \cite{HS} and, more specifically, Section 4.1). 
However, up our knowledge, a Carleman estimate for the Zaremba problem could not be obtained so far. We should nonetheless mention Fu's work \cite{Fu} where the author proves a global Carleman inequality in a Laplace problem with mixed - but different - boundary conditions.
\newline Carleman estimates have many applications ranging from the quantification of unique continuation problems, inverse problems, to stabilization issues and control theory
(see the survey paper \cite{LL} for a general presentation of these topics). This last
application was the motivation for the proof of a suitable Carleman estimate (in the papers of either
Lebeau and Robbiano \cite{LR1} or Fursikov and Imanuvilov \cite{FI}) and is still animating nowadays a large developpement of Carleman estimates (see e.g. \cite{LRR1,LRL} were controllability of parabolic systems with non-smooth coefficients is studied).
Finally, we use the approach developped in \cite{Lebeau:96,LR2,Bu} (also used by other authors - see, e.g., \cite{Bel}) to deduce 
our stabilization result. We shall also address a controllability result for the heat equation with the Zaremba boundary condition
(based on the approach developped in \cite{LR1}).

\vspace{0.3 cm} {\bf Acknowledgements.} The authors thank J\'er\^ome Le Rousseau and Nicolas Lerner for interesting discussions related to this work.

\subsection{Stabilization of Waves}

Let $\Omega$ be a bounded connected open set of $\mathbb{R}^d$ with
$\Cinf$ boundary $\partial \Omega$. Let also $\Gamma$ a smooth hypersurface of $\partial \Omega$ which splits the boundary into the 
two non-empty open sets  $\partial \Omega_D$, $\partial\Omega_N$ so that $\partial \Omega = \partial \Omega_D \sqcup
\partial \Omega_N \sqcup \Gamma$ (see Figure 1). 
\newline \noindent We study the decay of the solution of the following
problem
\begin{equation}\label{ondesmelees}
  \left\{ \begin{array}{l}
    (\partial_t^2 - \Delta) u = 0\\
    u = 0\\
    \partial_{\nu} u + a (x) \partial_t u = 0\\
    (u, \partial_t u)_{|t = 0} = (u_0, u_1)
  \end{array} \begin{array}{l}
    \text{ in } \Omega \times \mathbb{R}^+,\\
    \text{ on } \partial \Omega_D \times \mathbb{R}^+,\\
    \text{ on } \partial \Omega_N \times \mathbb{R}^+,\\
    \text{ in } \Omega,
  \end{array} \right.
\end{equation}
where $(u_0, u_1) \in H^1 (\Omega) \times L^2 (\Omega)$ is such that $u_0 = 0$ in
$\partial \Omega_D$ and $a$ is, for some $\rho\in(0,1)$, a non-negative function of $\Con^{\rho}(\partial \Omega_N)$,
the space of H\"{o}lder continuous functions on $\partial \Omega_N$.

\par For the sake of simplicity, we here focus on the classical Laplacian $\Delta$ but all the results described below remain true with
the Laplacian associated to a smooth metric (see Section 4).

\begin{figure}[h]
\centerline{\begin{picture}(0,0)%
\includegraphics{figure.pstex}%
\end{picture}%
\setlength{\unitlength}{5594sp}%
\begingroup\makeatletter\ifx\SetFigFont\undefined%
\gdef\SetFigFont#1#2#3#4#5{%
  \reset@font\fontsize{#1}{#2pt}%
  \fontfamily{#3}\fontseries{#4}\fontshape{#5}%
  \selectfont}%
\fi\endgroup%
\begin{picture}(2126,1650)(2132,-1911)
\put(2147,-1368){\makebox(0,0)[lb]{\smash{{\color[rgb]{0,.56,0}$\partial \Omega_D $}}}}
\put(4243,-829){\makebox(0,0)[lb]{\smash{{\color[rgb]{0,0,1}$\partial \Omega_N $}}}}
\put(2677,-922){\makebox(0,0)[lb]{\smash{{\color[rgb]{1,0,0}$\Gamma $}}}}
\put(3521,-1116){\makebox(0,0)[lb]{\smash{{\color[rgb]{0,0,0}$\Omega $}}}}
\end{picture}%
}
\caption{A configuration example.}
\end{figure}
\noindent We denote $H =\{u_0 \in H^1 (\Omega) ; u_0 = 0 \text{ in } \partial \Omega_D \}
\times L^2 (\Omega)$ and define
\[ \mathcal{\mathcal{A} }= \left( \begin{array}{l}
     0\\
     \Delta
   \end{array} \begin{array}{l}
     I\\
     0
   \end{array} \right) \]
with domain
\[ \mathcal{D}(\mathcal{\mathcal{A} }) =\{(u_0, u_1)\in H ; \Delta u_0 \in L^2 (\Omega), u_1
   \in H^1 (\Omega), u_0 = 0 \text{ on } \partial \Omega_D \text{ and }
   \partial_{\nu} u_0 + a (x) u_1 = 0 \text{ on } \partial \Omega_N \}. \]
For any solution $u$ of \eqref{ondesmelees}, we define its energy by
\[ E (u, t) = \frac{1}{2} \int_\Omega | \partial_t u (x, t) |^2 + | \partial_{x} u (x, t)
   |^2 d x \]
where $\partial_x=(\partial_{x_1},...,\partial_{x_d})$.
\newline \noindent Denoting the resolvent set of $\mathcal{A}$ by
$$\rho (\mathcal{A})=\{ \mu \in \mathbb{C}; \mathcal{A}-\mu I: \mathcal{D(A)} \rightarrow  H \text{ is an isomorphism} \},$$
we will establish the following spectral estimate:

\begin{proposition}\label{spectral}
Let $\rho>1/2$ and $a\in\Con^{\rho}(\partial \Omega_N)$ a non-negative function.\\
If $a\neq 0$ and $ a(x) \tendvers{x}{\Gamma} 0$, then one has $i \mathbb{R} \subset \rho (\mathcal{A} )$ and there exists $C>0$ such that
  \[ \forall \lambda \in \mathbb{R}, \ \| (\mathcal{\mathcal{A} }- i \lambda I)^{- 1} \|_{H \rightarrow H}
     \leqslant C e^{C|\lambda |} . \]
\end{proposition}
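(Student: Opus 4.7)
The proof naturally splits into showing (i) $i\mathbb{R}\subset\rho(\mathcal{A})$ and (ii) the exponential resolvent bound. For (i), I would use dissipativity of $\mathcal{A}$ combined with unique continuation. A direct integration by parts yields
\[
\Re\langle \mathcal{A} U,U\rangle_H = -\int_{\partial\Omega_N} a(x)|u_1|^2\,dS \leq 0,
\]
so if $\mathcal{A} U=i\lambda U$ with $\lambda\in\mathbb{R}$, then $a u_1\equiv 0$ on $\partial\Omega_N$. For $\lambda\neq 0$ the relation $u_1=i\lambda u_0$ and the feedback condition $\partial_\nu u_0=-au_1$ show that $u_0$ and $\partial_\nu u_0$ both vanish on the non-empty open set $\{a>0\}\subset\partial\Omega_N$, and unique continuation (H\"ormander/Calder\'on) applied to the Helmholtz equation $(\Delta+\lambda^2)u_0=0$ forces $u_0\equiv 0$. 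The case $\lambda=0$ follows from an immediate energy identity. Surjectivity of $\mathcal{A}-i\lambda$ then follows from Fredholm theory, as $\mathcal{A}$ has compact resolvent by elliptic regularity for the Zaremba problem.

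For (ii), I would reduce the resolvent equation $(\mathcal{A}-i\lambda)U=F=(f_0,f_1)$ to the scalar problem
\[
(\Delta+\lambda^2)u_0 = f_1+i\lambda f_0 \ \text{in}\ \Omega,\quad u_0=0\ \text{on}\ \partial\Omega_D,\quad \partial_\nu u_0+i\lambda a u_0=-af_0\ \text{on}\ \partial\Omega_N,
\]
with $u_1=i\lambda u_0+f_0$. Bounding $\|u_0\|_{H^1}\leq Ce^{C|\lambda|}\|F\|_H$ then gives the stated estimate.

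To extract the $e^{C|\lambda|}$ growth from the Carleman estimate for the Zaremba problem (the main result of the paper), I would use the Lebeau--Robbiano/Burq augmentation trick: set $v(s,x)=\cosh(\lambda s)\,u_0(x)$ on the cylinder $Z=(-1,1)\times\Omega$, so that
\[
(\partial_s^2+\Delta_x)v=\cosh(\lambda s)\bigl(f_1+i\lambda f_0\bigr),
\]
with $v=0$ on $(-1,1)\times\partial\Omega_D$ and $\partial_\nu v + i\lambda a v = -\cosh(\lambda s)\,af_0$ on $(-1,1)\times\partial\Omega_N$. After multiplying by a cutoff in $s$ to localize to the interior of $Z$, I would apply the Carleman estimate for the Zaremba problem to $v$ with a weight $\varphi(s,x)$ satisfying the usual H\"ormander/sub-ellipticity conditions adapted to the Zaremba interface. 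Choosing the Carleman parameter $\tau$ proportional to $|\lambda|$ and using $\cosh(\lambda s)\leq e^{|\lambda|}$ transfers the exponential growth from $s$ to the final estimate on $u_0$, producing the desired bound.

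The main obstacle is controlling the boundary term arising from $i\lambda a v$: a brute-force bound by $|\lambda|\|a\|_\infty\|v\|$ would overwhelm the Carleman gain. This is where the hypothesis $a(x)\to 0$ as $x\to\Gamma$ and the H\"older regularity $a\in\Con^\rho$ with $\rho>1/2$ intervene, allowing the offending term to be absorbed into the leading Carleman terms through a trace/interpolation argument. The other delicate point is the choice of weight near the Zaremba corner $\Gamma$, where the usual Carleman recipes fail because the boundary operator changes type; the Carleman inequality proved earlier in the paper is precisely tailored to this geometric singularity and is what makes the whole argument possible.
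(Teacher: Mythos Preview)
Your overall architecture is right, and part (i) is fine (the paper actually uses its own interpolation inequality \eqref{continuation unique} rather than classical Calder\'on--H\"ormander unique continuation, but either works for injectivity). Part (ii), however, differs from the paper in two places and has one genuine gap.

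First, the paper does \emph{not} apply the Carleman estimate directly with parameter $\tau\sim|\lambda|$. It uses the Carleman estimate once and for all to prove the interpolation inequality of Proposition~\ref{interpolation}, which is $\lambda$-free; then it applies that interpolation inequality to $v(x_0,x)=e^{\lambda x_0}u_0(x)$ and reads off the $e^{C|\lambda|}$ factor from the exponential weights on $X$ versus $Y$. Your direct route can be made to work, but the modular route via Proposition~\ref{interpolation} is what the paper does.

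Second, and this is the real point, your choice $v=\cosh(\lambda s)u_0$ does not convert the boundary operator $\partial_\nu+i\lambda a$ into a $\lambda$-independent differential boundary operator, which is exactly the obstacle you identify. The paper's choice $v=e^{\lambda x_0}u_0$ does: since $\partial_{x_0}v=\lambda v$, the condition $(\partial_\nu+i\lambda a)u_0=-af_0$ becomes $(\partial_\nu+ia\,\partial_{x_0})v=v_1$, which is precisely the boundary condition built into Proposition~\ref{interpolation} (and into Corollaries~\ref{carlemanneumann2}, \ref{carlemanmelee}). With $\cosh$ there is no such identity, and your proposed absorption via smallness of $a$ near $\Gamma$ does not handle the region $\{a>\delta\}$ away from $\Gamma$.

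Finally, you omit a step the paper needs: Proposition~\ref{interpolation} (or the Carleman estimate in the region $\{a>\delta\}$) leaves an observation term $\|u_0\|_{L^2(\partial\Omega_N^\delta)}$ on the right-hand side. The paper closes the loop by taking the imaginary part of $\int_\Omega \overline{u_0}(\Delta+\lambda^2)u_0$, which yields $|\lambda|\int_{\partial\Omega_N}a|u_0|^2\le C\|F\|_H\|u_0\|$, and then combines this with \eqref{continuation unique} via a dichotomy/Young-inequality argument to absorb the observation term. Without this dissipation identity the estimate does not close.
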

Hence, using an
useful result of Burq (see \cite[Theorem 3]{Bu}), we get our logarithmic decay result:
\begin{theorem}\label{decroissancelog}
Let $\rho>1/2$ and $a\in\Con^{\rho}(\partial \Omega_N)$ a non-negative function.\\
If $ a(x) \tendvers{x}{\Gamma} 0$ and $a \neq 0$ then, for every $k \ge 1$, there exists $C_{k}> 0$ such that, for every $(u_{0,} u_1) \in
  \mathcal{D}(\mathcal{\mathcal{A} }^k)$ the corresponding solution $u$ of \eqref{ondesmelees}
  satisfies
  \[ \forall t \geqslant 0, \ E (u, t)^{1 / 2} \leqslant
     \frac{C_{k}}{\log (2 + t)^k} \left\| (u_0, u_1)
     \right\|_{\mathcal{D}(\mathcal{\mathcal{A} }^k)} . \]
\end{theorem}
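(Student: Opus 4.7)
The plan is to deduce Theorem \ref{decroissancelog} directly from Proposition \ref{spectral} via the abstract semigroup decay result of Burq, after checking the basic well-posedness of \eqref{ondesmelees}.

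First I would set up the semigroup framework. Equipping $H$ with the inner product $\inp{(u_0,u_1)}{(v_0,v_1)}_H = \int_\Omega \partial_x u_0 \cdot \overline{\partial_x v_0}\, dx + \int_\Omega u_1 \overline{v_1}\, dx$, a straightforward computation using Green's formula gives, for $(u_0,u_1) \in \mathcal{D}(\mathcal{A})$,
\[
\Re \inp{\mathcal{A}(u_0,u_1)}{(u_0,u_1)}_H = -\int_{\partial\Omega_N} a(x)\,|u_1|^2\, d\sigma \leq 0,
\]
so $\mathcal{A}$ is dissipative. Combined with the surjectivity of $\mathcal{A} - I$ (which follows from the standard variational resolution of the Zaremba Laplacian and Lax--Milgram in the space of $u_0 \in H^1(\Omega)$ vanishing on $\partial\Omega_D$, the dissipative boundary term contributing a non-negative form on $\partial\Omega_N$), the Lumer--Phillips theorem shows that $\mathcal{A}$ generates a contraction semigroup $(e^{t\mathcal{A}})_{t\geq 0}$ on $H$. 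In particular, for $(u_0,u_1) \in \mathcal{D}(\mathcal{A}^k)$, the solution $u$ of \eqref{ondesmelees} is such that $(u,\partial_t u) = e^{t\mathcal{A}}(u_0,u_1)$ stays in $\mathcal{D}(\mathcal{A}^k)$ with $\|(u,\partial_t u)\|_{\mathcal{D}(\mathcal{A}^k)} \leq \|(u_0,u_1)\|_{\mathcal{D}(\mathcal{A}^k)}$.

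Next I would invoke \cite[Theorem 3]{Bu}: if $\mathcal{A}$ generates a bounded semigroup on a Hilbert space, $i\mathbb{R} \subset \rho(\mathcal{A})$, and there exist $C>0$ such that
\[
\forall \lambda \in \mathbb{R}, \qquad \|(\mathcal{A}-i\lambda I)^{-1}\|_{H\to H} \leq C e^{C|\lambda|},
\]
then for every integer $k\geq 1$ there is $C_k>0$ with
\[
\|e^{t\mathcal{A}}(u_0,u_1)\|_H \leq \frac{C_k}{\log(2+t)^k}\,\|(u_0,u_1)\|_{\mathcal{D}(\mathcal{A}^k)}, \qquad t\geq 0.
\]
The hypotheses of Burq's theorem are precisely supplied by Proposition \ref{spectral} together with the dissipativity above, so the conclusion applies. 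Since $\|e^{t\mathcal{A}}(u_0,u_1)\|_H^2$ is equivalent to $2E(u,t)$, this yields the desired estimate on $E(u,t)^{1/2}$.

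There is essentially no hard step left: the semigroup generation is routine (it requires only standard properties of the Zaremba Laplacian, which are recalled for instance in \cite{HS} and follow from the variational formulation), and the passage from the resolvent estimate to logarithmic decay is black-boxed by Burq's theorem. The only point deserving care is to ensure that the spectral bound of Proposition \ref{spectral} really covers the entire imaginary axis, including a neighborhood of $\lambda=0$; this is automatic since $0 \in \rho(\mathcal{A})$ (as $a\not\equiv 0$ on $\partial\Omega_N$ rules out nontrivial harmonic functions compatible with the boundary conditions, via an energy/unique-continuation argument analogous to the one underlying the spectral estimate). The genuine mathematical content of Theorem \ref{decroissancelog} is therefore already entirely concentrated in Proposition \ref{spectral}, whose proof rests on the Carleman estimate for the Zaremba problem that is the main technical achievement of the paper.
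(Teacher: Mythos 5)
Your proposal is correct and follows essentially the same route as the paper: establish generation of a contraction semigroup (the paper uses monotonicity plus Lax--Milgram and Hille--Yosida, you use Lumer--Phillips, an immaterial difference), then combine Proposition \ref{spectral} with the abstract resolvent-to-decay result of Burq (the paper cites \cite[Theorem A]{BD} and \cite{Bu}) and the identity $\|e^{t\mathcal{A}}(u_0,u_1)\|_H^2 = 2E(u,t)$. The only genuine content beyond Proposition \ref{spectral} is this semigroup setup and the black-boxed abstract theorem, exactly as you say.
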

These results are completely analogous to the ones obtained by Lebeau and
Robbiano in \cite{LR2}. The outline of the proof is also quite similar to the one
proposed there except that the situation is a bit different here because of the
mixed character of the boundary value problem. 
\newline The key point is also to establish some Carleman estimate in a \nhd of $\Gamma$ and to obtain some interpolation inequality (see \cite[Th\'eor\`eme 3]{LR2}).
This last result concerns an abstract problem derived from the spectral problem.
\newline Defining $X = (- 1, 1) \times \Omega, \partial X_N = (- 1, 1) \times \partial \Omega_N,
\partial X_D = (- 1, 1) \times \partial \Omega_D$, we consider the corresponding problem:
\begin{equation}\label{ondesabstrait}
  \left\{ \begin{array}{l}
    \Delta_X v = v_0\\
    (\partial_{\nu} + i a (x) \partial_{x_0}) v = v_1\\
    v = 0
  \end{array} \begin{array}{l}
    \text{ in } X,\\
    \text{ on } \partial X_N,\\
    \text{ on } \partial X_D,
  \end{array} \right.
\end{equation}
for some data $v_0 \in L^2(X) $ and $v_1\in L^2(\partial X_N)$.

\noindent If $Y = (- 1/ 2, 1 / 2) \times \Omega$ and $\partial X^{\delta}_N=(-1,1)\times \{x\in \partial \Omega_N; a(x)>\delta\}$, 
we will prove the following interpolation result.

\begin{proposition}\label{interpolation} Let $\rho>1/2$ and $a\in\Con^{\rho}(\partial \Omega_N)$ a non-negative function.\\
  If  $ a(x) \tendvers{x}{\Gamma} 0$ and $a \neq 0$, there exists $\delta>0$, $C > 0$ and $\tau_0 \in (0, 1)$ such that for
  any $\tau \in [0, \tau_0]$ and for any function $v$ solution of \eqref{ondesabstrait}, the
  following inequality holds
  \[  \| v \|_{H^1 (Y)} \leqslant C \left( \| v_0\|_{L^2 (X)} + \| v_1 \|_{L^2 (\partial X_N)} 
+\|v \|_{L^2 (\partial X^{\delta}_N)}+ \|\partial_{x_0} v \|_{L^2 (\partial X^{\delta}_N)} \right)^{\tau} \| v\|^{1 - \tau}_{H^1 (X)}. \]
\end{proposition}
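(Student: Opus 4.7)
The plan is to adapt the Lebeau--Robbiano strategy \cite{LR2} to the mixed-boundary setting, combining the Carleman estimate for the Zaremba problem (derived earlier in the paper) with standard half-space Carleman estimates for pure Dirichlet/Neumann boundaries. I would cover a neighborhood of $\overline{Y}$ by finitely many charts of three types: interior charts, charts meeting only $\partial X_D$ or only $\partial X_N$, and charts adapted to the interface $\Gamma$. In each chart the appropriate Carleman estimate is applied to $\chi v$ for a suitable cutoff $\chi$; the commutator $[\Delta_X,\chi]$ and the Robin-type boundary contribution are the error terms that a propagation-of-smallness scheme must control.

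The key technical difficulty is the feedback term. Writing the condition on $\partial X_N$ as $\partial_\nu v = v_1 - i a(x)\partial_{x_0} v$, a Neumann-type Carleman estimate produces a boundary contribution of the form $\|e^{\varphi/h} a(x) \partial_{x_0}(\chi v)\|_{L^2(\partial X_N)}$. Using the hypothesis $a(x)\to 0$ as $x\to\Gamma$, I would choose $\delta>0$ and a neighborhood $U$ of $\Gamma$ so that $a\leq \delta$ on $\partial\Omega_N\cap U$: on this portion the term is absorbed by the main left-hand side of the Carleman estimate (for $\delta$ and $h$ small enough), while on the complementary region $\partial X_N^\delta$ it is controlled by $\|\partial_{x_0} v\|_{L^2(\partial X^\delta_N)}$, which appears in the observation.

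Next, following \cite{LR2,Lebeau:96}, I would arrange a finite sequence of weights $\varphi_j$ so that a telescoping scheme transports the estimate from the observation region $\partial X^\delta_N$ (together with the data $v_0, v_1$) into the interior and across $\Gamma$. Each $\varphi_j$ must satisfy H\"ormander's sub-ellipticity assumption on the relevant chart, and the level sets $\{\varphi_j>c_j\}$ are nested so that the localized estimates chain together. Summing the local inequalities yields, for $h>0$ small,
\[
\|v\|_{H^1(Y)} \leq C e^{C/h}\left(\|v_0\|_{L^2(X)} + \|v_1\|_{L^2(\partial X_N)} + \|v\|_{L^2(\partial X^\delta_N)} + \|\partial_{x_0} v\|_{L^2(\partial X^\delta_N)}\right) + C e^{-C'/h}\|v\|_{H^1(X)}.
\]
Balancing the two terms over $h$ via the standard optimization argument then produces some $\tau\in(0,1)$ and the claimed interpolation.

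The main obstacle is the geometry near $\Gamma$: the weight must be chosen so that (i) the sub-ellipticity condition underlying the Zaremba Carleman estimate of the paper holds in the chart around $\Gamma$, and (ii) its level sets continue smoothly into the adjacent pure-Dirichlet and pure-Neumann charts, so that information actually propagates across the Neumann/Dirichlet transition. Combined with the constraint $a\leq\delta$ on the part of $\partial X_N$ inside this chart, this forces a careful joint choice of the neighborhood $U$, the cutoffs $\chi_j$, and the weights $\varphi_j$. Once these are fixed, the rest is a routine Lebeau--Robbiano-type telescoping and optimization in $h$.
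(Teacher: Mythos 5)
Your overall architecture (local Carleman estimates glued by cutoffs, data plus observation on $\partial X_N^\delta$, final optimization in $h$) is the same Lebeau--Robbiano scheme as the paper, which uses a single weight $\varphi=e^{\beta\psi}$ built from $\psi_0(x_0)+d(y,\partial\Omega)$, one cutoff based on level sets of $\psi$, and a four-case partition of unity (Dirichlet, Neumann with $a$ small, $\{a>\delta\}$ with no boundary condition used, Zaremba near $\Gamma$), combined with the interior interpolation lemma of \cite{LR1}. The organizational differences (a telescoping family $\varphi_j$ instead of one weight plus the interior lemma; using a Neumann estimate on $\{a>\delta\}$ instead of the no-boundary-condition estimate of Proposition \ref{carlemansansconditionbord} with both traces placed on the right-hand side) are harmless. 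The genuine gap is in your treatment of the feedback term in the chart containing $\Gamma$. You propose to absorb $\|e^{\varphi/h}a\,\partial_{x_0}(\chi v)\|_{L^2(\partial X_N)}$ into ``the main left-hand side of the Carleman estimate'' using only $\sup a\le\delta$ there. This works for the pure Neumann charts (Corollary \ref{carlemanneumann2}), because Proposition \ref{carlemanneumann} does control the full tangential gradient trace $|h(\partial_{x'}g)e^{\varphi/h}|$ in $L^2$. It fails in the Zaremba chart: Theorem \ref{Carleman-mixte} controls only $|g e^{\varphi/h}|_{1/2}$ and $|h(\partial_{x_n}g)e^{\varphi/h}|_{-1/2}$ on the boundary, and this loss of half a derivative is sharp (Shamir's counterexample, as recalled after the theorem), so no $L^2$ bound on $\partial_{x_0}(\chi v)$ at the boundary is available to absorb against.

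The paper's way around this (Corollary \ref{carlemanmelee}) is to measure the perturbation in $H^{-1/2}_{sc}(x_1<0)$: one bounds $|a(\partial_{x_0}g)e^{\varphi/h}|_{H^{-1/2}_{sc}(x_1<0)}$ by $\|\overline a\,\chi(\cdot/\kappa)\|_{\Con^{\rho'}}\,|(\partial_{x_0}g)e^{\varphi/h}|_{H^{-1/2}_{sc}(x_1<0)}$, where $\overline a=1_{\{x_1<0\}}a$; this multiplication step is exactly where the hypothesis $\rho>1/2$ enters (boundedness of multiplication by $\Con^{\rho}$, $\rho>1/2$, on $H^{-1/2}$, via \cite{Triebel}), and the needed smallness comes not from $\sup a\le\delta$ alone but from Lemma \ref{holderpetit}, which exploits $a_{|\Gamma}=0$ to gain a factor $\kappa^{\rho-\rho'}$ as the chart radius shrinks; the resulting term is then absorbed into $|g e^{\varphi/h}|_{1/2}$, since $h\partial_{x_0}$ applied to $ge^{\varphi/h}$ is controlled in $H^{-1/2}$ by the $H^{1/2}$ trace norm. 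Note that smallness of $\sup|a|$ by itself would not suffice here, because multiplication by a merely bounded (or merely continuous) function is not bounded on $H^{-1/2}$. Your proposal never invokes the regularity $\rho>1/2$, which is the telltale sign of this missing step; without it the chaining across $\Gamma$ does not close. A secondary point to fix when implementing your scheme: near $\Gamma$ the weight must also satisfy the constraint $|\partial_{x'}\varphi(0)|\le\eps\,\partial_{x_n}\varphi(0)$ required by Theorem \ref{Carleman-mixte}, which is why the paper takes $\psi_0$ constant in $x_0$ on $[-1/2,1/2]$ and $\psi_1$ equal to the distance to $\partial\Omega$.
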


\subsection{Carleman estimates for the Zaremba Boundary Condition}
We will now present our Carleman estimates and establish first some useful notations. Let $n \ge 2$ be the dimension of the connected manifold $X$.
\subsubsection{Notations}\label{Notations}
\paragraph{Pseudodifferential operators}
We use the notation introduced in  \cite{LR1}.

\noindent First, we shall use in the sequel the notations $\est{\xi}:= (1+|\xi|^2)^\hf$ and $D_{x_j}=\frac{h}{i}\partial_{x_j}$ for $1 \le j\le n$.

\noindent Let us now introduce semi-classical $\psi$DOs. We denote by
$S^m(\R^{n}\times\R^{n})$, $S^m$ for short, the space of smooth
functions $a(x,\xi,h)$, defined for $h \in (0,h_0]$ for some
$h_0>0$, that satisfy the following property: for all $\al$, $\be$
multi-indices, there exists $C_{\al,\be}\geq 0$, such that
\begin{align*}
  \left|\d_{x}^\al \d_{\xi}^\be a (x,\xi,h) \right|\leq
  C_{\al,\be} \est{\xi}^{m-|\be|}, \quad x \in \R^{n}, \
  \xi \in \R^{n},\ h \in (0,h_0]. 
\end{align*}
Then, for all sequences $a_{m-j} \in S^{m-j}$, $j\in \N$, there exists
a symbol $a \in S^m$ such that $a \sim \sum_j h^j a_{m-j}$, in the
sense that $a - \sum_{j<N} h^j a_{m-j} \in h^N S^{m-N}$ (see for
instance \cite[Proposition 2.3.2]{Martinez:02} or \cite[Proposition
18.1.3]{Hoermander:V3}), with $a_m$ as principal symbol.  We define
$\Psi^m$ as the space of $\psi$DOs $\mathcal{A}  = \Op(a)$, for $a \in S^m$,
formally defined by
\begin{align*}
  \mathcal{A} \, u (x) = \frac{1}{(2\pi h)^{n}} \iint e^{i\inp{x-t}{\xi}/h} 
  a(x,\xi,h)\: u(t)\  d t \  d \xi, 
  \quad u \in \S'(\R^{n}).
\end{align*}

We now introduce tangential symbols and associated operators.  
\newline \noindent We set
$x=(x',x_n)$, $x'=(x_1,\dots,x_{n-1})$ and
$\xi'=(\xi_1,\dots,\xi_{n-1})$ accordingly. We denote by
$S^m_\T(\R^{n}\times\R^{n-1})$, $S^m_\T$ for short, the space of
smooth functions $b(x,\xi',h)$, defined for $h \in (0,h_0]$ for some
$h_0>0$, that satisfy the following property: for all $\al$, $\be$
multi-indices, there exists $C_{\al,\be}\geq 0$, such that

 \begin{align*}
   \left| \d_{x}^\al \d_{\xi'}^\be b (x,\xi',h) \right| \leq
  C_{\al,\be} \est{\xi'}^{m-|\be|}, \quad x\in \R^{n}, \
   \xi' \in \R^{n-1},\ h \in (0,h_0]. 
\end{align*}
As above, for all sequences $b_{m-j} \in S^{m-j}_\T$, $j\in \N$, there
exists a symbol $b \in S^m_\T$ such that $b \sim \sum_j h^j
b_{m-j}$, in the sense that $b - \sum_{j<N} h^j b_{m-j} \in h^N S^{m-N}_\T$,
with $b_m$ as principal symbol. We define $\Psi^m_\T$ as the space of
tangential $\psi$DOs $B = \op(b)$ (observe the notation we adopt is different from above to avoid confusion), for $b \in S^m_\T$, formally defined by
\begin{align*}
  B\, u (x) = \frac{1}{(2\pi h)^{n-1}} \iint e^{i\inp{x'-t'}{\xi'}/h } 
  b(x,\xi',h)\: u(t',x_n)\  d t' \  d \xi', 
  \quad u \in \S'(\R^{n}).
\end{align*}
We shall also denote the principal symbol $b_m$ by $\sigma(B)$.

\paragraph{Different norms}
We use $L^2$ and $H^s_{sc}$ semi-classical norms on $\R^n$, on $\{x_n>0\}$, on $\{x_n=0\}$ and on $\{x_n=0,\ \pm x_1 > 0\}$. 
We recall that, in this paper, we use the usual semi-classical notations, namely $D_{x_j}=\frac{h}{i}\partial_{x_j}$, and the symbols are quantified in semi-classical sense. In particular all the norms depend on $h$.

To distinguish 
these different norms, we denote by 
$$
\|u\|^2=\int_{\R^n}|u(x)|^2dx, \quad   \|u\|_{s}=\|\Op(\est{\xi}^s)u\|
$$
and
$$
\|u\|^2_{L^2(x_n>0)}=\int_{\{x\in\R^n,\ x_n>0\}}|u(x)|^2 d x, \quad 
 \|u\|^2_{H^1_{sc}(x_n>0)}= \|u\|^2_{L^2(x_n>0)}+\sum_{j=1}^{n}\|D_{x_j}u\|^2_{L^2(x_n>0)}.
$$
Finally, on $x_n=0$, we use the norms
$$
|v|^2=\int_{\R^{n-1}}|v(x')|^2dx',\quad |v|_{s}=|\op(\est{\xi'}^s)v|,
$$
and the space $H^s_{sc}(\pm x_1 > 0)$ of the restrictions of $H^s_{sc}(\R^{n-1})$ functions equipped with the norm
$$
|v|_{H^s_{sc}(\pm x_1 > 0)}
=\inf_{\genfrac{}{}{0pt}{}{w\in H^s_{sc}(\R^{n-1})}{w_{|\pm x_1 > 0}=v}}   |\op(\est{\xi'}^s)w| .
$$
In particular for $v\in H^s(\R^{n-1})$, we have 
$$
|v_{|\pm x_1 > 0}|_{H^s_{sc}(\pm x_1 > 0)}\le |v|_{s}
$$
and we write, when there is no ambiguity, $ |v|_{H^s_{sc}(\pm x_1 > 0)}$ instead of $|v_{|\pm x_1 > 0}|_{H^s_{sc}(\pm x_1 > 0)}$.

\subsubsection{Carleman estimate}
We now detail the local Carleman estimate obtained for the Zaremba boundary problem. 
\newline
\par Let $B_\kappa =\{x \in \mathbb{R}^n ; |x| \leqslant \kappa \}$ 
and $P$ a differential operator whose form is
$$ P=-\partial_{x_n}^2+R\left(x,\frac{1}{i}\partial_{x'}\right)$$
where $\partial_{x'}= (\partial_{x_1}, \ldots, \partial_{x_{n - 1}})$ and the symbol $r(x,\xi')$ of $R$ is real, homogeneous of degree 2 in $\xi'$ and satisfies
$$\left\{ \begin{array}{l} \exists c>0; \forall (x,\xi')\in B_\kappa\times \R^{n-1}, \ r(x,\xi')\ge c |\xi'|^2,\\
   \\
\forall \xi'\in \R^{n-1}, r(0,\xi')=|\xi'|^2.
  \end{array}\right. 
$$
As usual in the context of Carleman estimates, we define the conjugate $P_{\varphi} = h^2 e^{\varphi / h} \circ P \circ e^{- \varphi /
h}$ for $\varphi$ any real-valued $\Cinf$ function. Since
$$P_{\varphi}=h^2\left(\frac{1}{i}\partial_{x_n}+\frac{i}{h} \partial_{x_n} \varphi\right)^2+h^2 R\left(x,\frac{1}{i}\partial_{x'} +\frac{i}{h}\partial_{x'} \varphi \right)  ,$$
the corresponding semi-classical principal symbol satisfies
\[ p_{\varphi} (x, \xi) =  (\xi_{n} + i \partial_{x_n} \VP  (x))^2 + r (x, \xi' + i\partial_{x'} \varphi (x)). \]
 We assume that $\varphi$ is such that, for some $\kappa_0>0$,
\begin{equation}\label{nonzero}
  \forall x \in B_{\kappa_0}, \frac{\partial \varphi}{\partial x_n} (x) \neq 0
\end{equation}
and that H\"ormander pseudo-convexity hypothesis (see \cite[Paragraph 28.2,
28.3]{Hoermander:V3}) holds for $P$ on $B_{\kappa_0}$
\begin{equation}\label{hypohormander}
  \forall (x, \xi) \in B_{\kappa_0} \times \mathbb{R}^n, p_{\varphi} (x, \xi) = 0
  \Rightarrow \{\Re p_{\varphi}, \Im p_{\varphi} \}(x, \xi) > 0,
\end{equation}
where the usual Poisson bracket is defined, for $p, q$ smooth functions, by
  \[ \{p , q\} (x, \xi) = (\partial_{\xi} p. \partial_x q - \partial_x p.
     \partial_{\xi} q) (x, \xi). \]
\begin{remark}
 For instance in the model case $P=-\Delta$, we can choose $\displaystyle \VP (x_n)= x_n+\frac{a}{2}x_n^2$. We have indeed $p_\VP(x,\xi)=(\xi_n+i(1+ax_n))^2+|\xi'|^2$
thus $\{\Re p_\VP, \Im p_\VP\}(x,\xi)=4a(\xi_n^2+(1+ax_n)^2)>0$ if $x_n$ is small enough. 

In the general case, changing  $\VP$ into $e^{\beta \VP}$ for $\beta>0$ large enough, hypothesis \eqref{hypohormander}  can be satisfied (see \cite[Proposition 28.3.3]{Hoermander:V3} or \cite[Proof of Lemma 3, page 352]{LR1}).
\end{remark}
Our local Carleman estimate for the Zaremba Boundary Condition can now be stated in the following form.
\begin{theorem}\label{Carleman-mixte}
There exists $\eps>0$ such that if $\varphi$ satisfies
$$  \left( \frac{\partial \varphi}{\partial x_n} > 0 \text{ on } \{x_n=0\} \cap B_{\kappa_0} \right)\text{ and }|\partial_{x'} \VP(0)|\le \eps \partial_{x_n} \VP(0)$$ and \eqref{nonzero}, \eqref{hypohormander} hold
then,  there exists $\kappa\in(0,\kappa_0]$ and $ C, h_0 > 0$, such that, for any $h \in (0,
  h_0)$, $g_0\in H^{1/2}(x_1>0)$, $g_1 \in H^{-1/2}(x_1<0)$ and any $g \in H^1 (\R^n)$ supported in $B_\kappa$ which satisfies 
$$ P(g)\in L^2(\R^n) \text{ and } 
 \left\{ \begin{array}{l}
       g = g_0\\
       \partial_{x_n} g = g_1
     \end{array} \begin{array}{l}
       \text{ if } x_n=0 \text{ and } x_1>0,\\
       \text{ if } x_n=0 \text{ and } x_1<0,
     \end{array} \right. $$
the following inequality holds:
\begin{eqnarray*}&\| g e^{\varphi / h}\|_{H^1_{sc}(x_n>0)} + | g e^{\varphi / h}|_{1/2}  + |h (\partial_{x_n} g) e^{\varphi / h}|_{-1/2}&\\
&\le C\left(h^{-1/2}\|h^2 P(g) e^{ \varphi / h}\|_{L^2(x_n>0)}+  | g_0 e^{ \varphi / h}|_{H^{1/2}_{sc}(x_1>0)} +|h g_1 e^{ \varphi / h}|_{H^{-1/2}_{sc}(x_1<0)}\right).&
\end{eqnarray*}
\end{theorem}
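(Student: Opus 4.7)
My plan is the classical conjugation approach adapted to the Zaremba mixed condition. Set $v = g e^{\varphi/h}$; a direct computation shows $P_\varphi v = h^2(Pg)e^{\varphi/h}$ in the interior, the Dirichlet trace reads $v|_{\{x_n=0,\,x_1>0\}} = g_0 e^{\varphi/h}$, and the ``conjugated Neumann'' trace reads $(hD_{x_n}v - (\partial_{x_n}\varphi)v)|_{\{x_n=0,\,x_1<0\}} = hg_1 e^{\varphi/h}$. The target inequality rewrites entirely in semiclassical norms of $v$. One then reduces to homogeneous Zaremba data by extending $g_0 e^{\varphi/h}$ from $\{x_1>0\}$ to an element of $H^{1/2}_{sc}(\R^{n-1})$ and $hg_1 e^{\varphi/h}$ from $\{x_1<0\}$ to an element of $H^{-1/2}_{sc}(\R^{n-1})$, constructing a lift $w$ on $\{x_n>0\}$ matching these extensions and satisfying $\|w\|_{H^1_{sc}(x_n>0)} + h^{-1/2}\|P_\varphi w\|_{L^2(x_n>0)}$ bounded by the boundary-data norms appearing on the right-hand side, then passing to $v-w$.

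The core step is the Carleman estimate for $v$ with homogeneous Zaremba data: $v|_{x_1>0,\,x_n=0}=0$ and $(hD_{x_n}v-(\partial_{x_n}\varphi)v)|_{x_1<0,\,x_n=0}=0$. Following the Lebeau--Robbiano strategy of \cite{LR1}, decompose $P_\varphi = Q_2 + iQ_1$ with $Q_2, Q_1$ formally self-adjoint semiclassical operators, and expand
\[ \|P_\varphi v\|^2_{L^2(x_n>0)} = \|Q_2 v\|^2 + \|Q_1 v\|^2 + (i[Q_2,Q_1]v,v) + \mathcal{B}(v), \]
where $\mathcal{B}(v)$ collects boundary quadratic forms in $(v|_{x_n=0},\,hD_{x_n}v|_{x_n=0})$. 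Pseudoconvexity \eqref{hypohormander} makes $i[Q_2,Q_1]$ positive modulo lower order on the characteristic set of $P_\varphi$, yielding interior control of $\|v\|^2_{H^1_{sc}(x_n>0)}$ up to a boundary contribution. The transversality $\partial_{x_n}\varphi>0$ on $\{x_n=0\}$ together with the Zaremba conditions (which kill half of each factor in $\mathcal{B}$) then let us extract the trace norms $|v|_{1/2}$ and $|hD_{x_n}v-(\partial_{x_n}\varphi)v|_{-1/2}$ on the left.

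The main obstacle is analyzing $\mathcal{B}(v)$ near the crack $\{x_1=0,\,x_n=0\}$: the tangential $\psi$DOs produced by the symbolic expansion do not commute with the sharp cutoff $\mathbf{1}_{x_1>0}$, so the boundary integral cannot be cleanly decoupled between $\{x_1>0\}$ and $\{x_1<0\}$. This is precisely where the smallness hypothesis $|\partial_{x'}\varphi(0)|\le\varepsilon\,\partial_{x_n}\varphi(0)$ enters: for $\varepsilon$ small, the roots $\lambda_\pm(x,\xi')$ of $p_\varphi=0$ in $\xi_n$ near $x=0$ have strictly opposite-signed imaginary parts uniformly in a conical neighborhood of $\xi'=0$, placing us in the elliptic regime where the Dirichlet-to-Neumann operator is a positive elliptic $\psi$DO of order $1$. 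A Toeplitz-type commutator estimate in the $x_1$-variable, in the spirit of the Rempel--Schulze calculus for mixed elliptic problems (cf.\ \cite{RS,HS}), absorbs the crack contribution. Combining the core estimate with the lift $w$ yields the stated inequality; the factor $h^{-1/2}$ on the right-hand side is the standard semiclassical loss from passing the $L^2$ source through a boundary trace.
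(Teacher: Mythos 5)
Your conjugation step and your identification of the crack $\{x_1=0,\,x_n=0\}$ as the crux are both on target, but the proposal has two genuine gaps. First, the reduction to homogeneous Zaremba data via a lift $w$ with $\|w\|_{H^1_{sc}(x_n>0)}+h^{-1/2}\|P_\varphi w\|_{L^2(x_n>0)}$ controlled by $|g_0e^{\varphi/h}|_{H^{1/2}_{sc}}+|hg_1e^{\varphi/h}|_{H^{-1/2}_{sc}}$ does not exist in general: prescribing \emph{both} traces of $w$ freely is incompatible with $\|P_\varphi w\|_{L^2}=O(h^{1/2})$, because in the elliptic zone $\mu<0$ (both roots of $p_\varphi$ in $\xi_n$ with negative imaginary part, see Lemma \ref{Lemme sur les racines}) the pair of traces of any such $w$ is forced, up to $O(h)$, onto the graph of an elliptic boundary relation; equivalently, any explicit Poisson-type lift of $H^{1/2}$ Dirichlet data produces $\|P_\varphi w\|_{L^2}\sim h^{1/2}|\theta|_{3/2}$, i.e.\ a full derivative loss. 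This is precisely why the paper does not lift: it keeps the unknown half-line traces $v_0,v_1$ (supported on complementary sides of $x_1=0$) as unknowns and derives a boundary equation for them, see \eqref{eq 28, mu grand}. Second, and more importantly, the step you describe as ``a Toeplitz-type commutator estimate in the spirit of Rempel--Schulze absorbs the crack contribution'' is not an argument; it is exactly the statement to be proved, and the boundary symbol involved ($\approx i|\xi'|$) fails the transmission condition, so the standard boundary calculus cannot be invoked. The paper's proof of this step is its core: a Wiener--Hopf-type factorization $\lambda_+^{1/2}\lambda_-^{1/2}$ of the boundary symbol, the Paley--Wiener support-preservation of $\op(\lambda_\pm^{s})$ on $\{\mp x_1>0\}$, and an $\eps$-dependent Weyl--H\"ormander calculus (Appendix A) showing the conjugated boundary operator is $i\,\mathrm{Id}$ plus terms of size $O(\eps)+O_\eps(h^{1/2})$, absorbed for $\eps$, $\kappa\le\eps^2$ and $h$ small.

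Two further inaccuracies in your sketch: the smallness hypothesis $|\partial_{x'}\varphi(0)|\le\eps\,\partial_{x_n}\varphi(0)$ does \emph{not} make the roots have opposite-signed imaginary parts near $\xi'=0$ --- there $\mu<0$ and both roots lie in $\{\Im\xi_n<0\}$, which is the zone where no boundary condition is needed and the traces are estimated by a parametrix/contour-deformation argument (Section 2.1); the $\eps$-smallness is instead used at the symbol level in the zone $\mu>-(\partial_{x_n}\varphi)^2$ to make the Wiener--Hopf-reduced boundary operator a small perturbation of $i\,\mathrm{Id}$ (see \eqref{racines x=0,1}--\eqref{eq 39, mu grand}). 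Also, the quadratic boundary form $\mathcal{B}(v)$ in the $Q_2+iQ_1$ expansion does not directly yield the global half-norms $|v|_{1/2}$ and $|hD_{x_n}v|_{-1/2}$, which do not decouple across $x_1\gtrless0$; the paper obtains interior and trace control instead from the first-order factorization $(D_{x_n}-\op(\rho_2\chi_1))(D_{x_n}-\op(\rho_1\chi_1))$, inverting the elliptic factor and running a Carleman-type positivity argument (via \eqref{hypohormander} and G\aa rding) only on the scalar factor $D_{x_n}-\op(\rho_1\chi_1)$. As it stands, your outline reproduces the known strategy away from the crack but leaves the genuinely new part of the theorem unproved.
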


\vspace{0.5cm}
\begin{remark}
 The estimate in the theorem, except for the boundary terms, is the usual Carleman estimate. Let us also remind that all the norms are semi-classical: in particular $\| g e^{\varphi / h}\|_{H^1_{sc}(x_n>0)}$ is equivalent to $h\| e^{\varphi / h} \partial_x g\|_{L^2(x_n>0)}+\| e^{\varphi / h} g\|_{L^2(x_n>0)} $.
For the other norms, we refer the reader to the definitions in paragraph \ref{Notations}.
\end{remark}

\begin{remark}
The  norms  $|.|_{1/2}$ and $|.|_{-1/2}$ on the boundary $x_n=0$ of the left hand side of this inequality cannot be replaced by the norms $|.|_{1}$ and $|.|_{}$
( provided that the data $g_0$, $g_1$ are estimated in the spaces $H^{1}(x_1>0)$ and $L^2(x_1<0)$).\\
Indeed, in the special case where $P=-\Delta$,
it is well-known that the variational solution of the boundary value problem
\[ \left\{ \begin{array}{l}
       -\Delta u = f\\
       u = 0\\
      \partial_{\nu} u=0
     \end{array} \begin{array}{l}
       \text{ in } X,\\
       \text{ on } \partial X_D,\\
       \text{ on } \partial X_N,
     \end{array} \right. \]
may be, even for smooth data $f$, such that $\partial_{\nu} u\notin L^2(\partial X)$.
\\ We refer the reader to the famous two-dimensionnal conterexample of Shamir (see \cite{Sh}) where one consider, in polar coordinates, the sets
$$X=\{(r,\theta); r\in(0,1), \theta\in(0,\pi)\}, \quad \partial X_N=\{(r,\pi); r\in(0,1)\},\quad \partial X_D=\partial X\setminus\overline{\partial X_N}.$$ 
and the function
$$ u(r,\theta)=\phi(r) r^{1/2}\sin\left( \frac{\theta}{2}\right)$$
with $\phi\in \Cinf([0,1])$ a cut-off function such that $\phi=1$ in some \nhd of $0$ and $\supp(\phi)\subset [0,1)$.
\end{remark}

The paper is structured as follows: our proof of the main Carleman
estimate (Theorem \ref{Carleman-mixte}) is divided into the three subsections of Section 2. 
This will allow us to deduce the interpolation inequality of Proposition \ref{interpolation} and finally Theorem \ref{decroissancelog} in Section 3.
In Section 4, we conclude by some comments on the geometry and sketch a proof of controllability of the heat equation with the Zaremba boundary condition. 
%
%
\section{Proof of Theorem \ref{Carleman-mixte}}
We first recall some well-known facts about pseudodifferential operators. We refer the reader to \cite{Martinez:02}. For simplicity, we write in all this section $\|\cdot\|_{H^s(x_n>0)}$ instead of $\|\cdot\|_{H^s_{sc}(x_n>0)}$. Note that there will be no confusion as we do not use the classical norm on $H^s(x_n>0)$.
 \newline \par\noindent \textbf{Composition formula.} If $a \in S^m$, $b \in S^{m'}$ then $\Op (a) \circ \Op (b) = \Op (c)$ for $c \in S^{m +
m'}$ given by
$$
  c (x, \xi,h) = \left( \sum_{| \alpha | \leqslant N} \frac{(h / i)^{|
  \alpha |}}{\alpha !} \partial^{\alpha}_{\xi} a \partial^{\alpha}_x b \right)
  (x, \xi,h) + h^{N + 1} R (x, \xi,h)
$$
where
\[ R (x, \xi,h) = \frac{N + 1}{(2 \pi h)^n} \int_0^1 (1 - t)^N \sum_{|
   \alpha | = N + 1} \frac{1}{i^{| \alpha |} \alpha !} \int_{\mathbb{R}^{2 n}}
 e^{- i z. \zeta / h} \partial^{\alpha}_{\xi} a (x, \xi + \zeta,h)
   \partial^{\alpha}_x b (x + t z, \xi,h) d z d \zeta d t. \]
We will also use the composition formula for tangential operators, which is completely analogous.
\newline In the sequel, we will also need the following straightforward result.
\begin{lemma}\label{regles}
Let $a\in S^m_\T$. Then
 $$[D_{x_n},\op(a)]=\frac{h}{i}\op(\partial_{x_n} a).$$
\end{lemma}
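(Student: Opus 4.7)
The plan is to verify the identity by a direct computation from the definition of $\op$, exploiting the fact that the tangential quantization uses a phase $e^{i\inp{x'-t'}{\xi'}/h}$ that does \emph{not} depend on $x_n$. So when we differentiate $\op(a)u$ in $x_n$, the only contributions come from the symbol $a(x,\xi',h)$ and from $u(t',x_n)$, via the product rule.

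Concretely, I would start from
\[
\op(a)\,u(x) = \frac{1}{(2\pi h)^{n-1}} \iint e^{i\inp{x'-t'}{\xi'}/h} a(x,\xi',h)\, u(t',x_n)\, dt'\, d\xi',
\]
apply $D_{x_n}=\tfrac{h}{i}\partial_{x_n}$, and differentiate under the integral sign (justified by the symbol estimates in $S^m_\T$ together with the standard oscillatory integral regularisation). The product rule gives two terms: one in which $\partial_{x_n}$ lands on $a$, producing $\op\!\left(\tfrac{h}{i}\partial_{x_n}a\right)u=\tfrac{h}{i}\op(\partial_{x_n}a)u$, and one in which $\partial_{x_n}$ lands on $u(t',x_n)$, producing $\op(a)(D_{x_n}u)$. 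Subtracting the second term yields exactly
\[
[D_{x_n},\op(a)]\,u = D_{x_n}\op(a)u - \op(a)D_{x_n}u = \frac{h}{i}\op(\partial_{x_n}a)\,u.
\]

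There is essentially no obstacle here: the result is purely formal once one observes that $x_n$ plays the role of a parameter in the tangential quantization. The only mild care needed is the justification of differentiating under the oscillatory integral, which is standard and covered by the references (e.g.\ Martinez \cite{Martinez:02}) already cited in the paper. This is why the authors label the lemma as ``straightforward.''
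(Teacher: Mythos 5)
Your computation is correct and is exactly the standard argument the paper has in mind: since the tangential quantization's phase involves only $(x',t',\xi')$, the variable $x_n$ enters only through the symbol $a$ and through $u(t',x_n)$, and Leibniz's rule gives the stated commutator identity. The paper offers no proof (it labels the lemma ``straightforward''), so your direct verification, including the routine justification of differentiation under the oscillatory integral, is precisely what is intended.
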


Next, we use the same notations as in \cite{LR2} and put \begin{eqnarray}
& p_\VP(x,\xi)=\xi_n^2+2i(\partial_{x_n}\VP)\xi_n+q_2(x,\xi')+2iq_1(x,\xi')\label{pphi}\\
&\textrm{where }  q_2(x,\xi')=-(\partial_{x_n}\VP)^2+r(x,\xi')-r(x,\partial_{x'}\VP(x)) \textrm{ and }q_1(x,\xi')=\tilde r(x,\partial_{x'}\VP(x),\xi').\nonumber
\end{eqnarray}
Here we denote by $\tilde r(x,.,.)$ the bilinear form associated to $r(x,.)$ (i.e. such that $r(x,\xi')=\tilde r(x,\xi',\xi')$ for all $\xi'\in \mathbb{R}^{n-1}$).

\noindent We also define $$\mu(x,\xi'):=q_2(x,\xi')+\frac{q_1(x,\xi')^2}{(\partial_{x_n}\VP(x))^2}.$$ 
The sign of $\mu$ is of great importance to localize the roots of $p_{\VP}$ in $\xi_n$. 
We may explain this from the model case presented in the introduction. In this framework, one has $P=-\Delta$ and we may choose $\varphi=\varphi(x_n)$ (more precisely of the form $\varphi(x_n)=x_n+a x_n^2/2$ for some $a>0$)  so that
$$p_\varphi(x,\xi')=(\xi_{n} + i \partial_{x_n} \VP  (x))^2+|\xi'|^2,$$
$$  q_2(x,\xi')=-(\partial_{x_n}\VP(x))^2+|\xi'|^2, \quad q_1(x,\xi')=0 \ \text{ and } \ \mu(x,\xi')=|\xi'|^2-(\partial_{x_n}\VP(x))^2.$$
Moreover, the roots of $p_{\VP}$ in $\xi_n$ are given by
$$\rho_{1}(x, \xi')=  -i(\partial_{x_n}\VP(x) - |\xi'|),  \quad \rho_{2}(x, \xi')=  -i(\partial_{x_n}\VP(x) + |\xi'|)$$
and satisfy
$$ \mu(x, \xi')>0 \Rightarrow \Im(\rho_{1}(x, \xi')) >0 > \Im(\rho_{2}(x, \xi'))$$
whereas
$$ \mu(x, \xi')<0 \Rightarrow \Im(\rho_{1,2}(x, \xi')) <0 .$$
In the microlocal zone $\mu <0$, the operator $p_\VP$ is elliptic and since its roots in $\xi_n$ have negative imaginary part, one will be able to estimate directly the traces of $g$ in terms of the interior data $P(g)$.
On the contrary, in the microlocal zone $\mu>0$, even is $p_\VP$ is elliptic, only one of its root in $\xi_n$ has negative imaginary part and elliptic estimates would only get an equation on the traces of $g$. 
In our general framework,  we prove several analogous properties presented in Lemma \ref{Lemme sur les racines} (which are very close to the ones of \cite[Lemme 3]{LR2}) and the case $\mu>0$ will in fact be treated in section 2.2. 
\newline \par Our proof of Theorem \ref{Carleman-mixte} is consequently divided in two main parts. In the first one, we establish a microlocal Carleman
inequality concentrated where $\mu<0$ and, in the second one, we focus on the microlocal region $\mu>-(\partial_{x_n} \VP)^2$. We will finally gather the results of these two parts in a short concluding section.
\newline
\par \noindent \textbf{Notations:} In the sequel, we set, for $w$ a function defined on $\R^n$, 
\begin{align*}
\underline{w}=\left\lbrace 
\begin{array}l
w \\
0 
\end{array}\begin{array}{l}
       \text{ if } x_n>0,\\
       \text{ if } x_n<0.
     \end{array}
\right.
\end{align*}
We also denote, for $z\in \C/\R_-$ and $s\in \R$,
$$z^s=\exp\left(s\log(z)\right)$$
where $\log$ is defined as an holomorphic function on $\mathbb{C} \backslash
\mathbb{R}_-$. Moreover, we use the notation $\sqrt{z}:=z^{1/2}$.
\subsection{Estimates in zone $\mu<0$}

We remind that we have denoted $v=e^{\VP/h}g$. We also define the set
$${\cal E}_{\alpha}=\{(x,\xi')\in\R^n\times\R^{n-1},\ \mu(x,\xi') \le -\alpha (\partial_{x_n}\VP)^2\}$$
where $\alpha>0 $ is a  sufficiently small parameter to be fixed later.

The proof we give essentially follows that of Lemma~4 in \cite{LR2} and Proposition~2.2 in \cite{LRR1}.

Let $\chi_-$ supported in ${\cal E}_{2\alpha}$ and satisfying $\chi_-=1$ in a \nhd of ${\cal E}_{3\alpha}$. Obviously $\chi_-\in S^0_\T$ because $\chi_-=0$ when $|\xi'|$ is large enough. 
\noindent If $u=\op(\chi_-)v$, one has
\begin{align}\label{estimation second membre}
& P_\VP u=\op(\chi_-)P_\VP v+[P_\VP,\op(\chi_-)]v=f_1 \textrm{ where }\nonumber\\
& \|f_1\|_{L^2(x_n>0)} \le C\|P_\VP v\|_{L^2(x_n>0)}+Ch\|v\|_{H^1(x_n>0)}.
\end{align}
Denoting $\delta^{(j)} = (d / d x_n)^j \delta_{|x_n = 0}$, straightforward computation show that we
have
\begin{equation}\label{extension} 
P_{\varphi} \underline{u} = \underline{f_1} - h^2   \gamma_0 (u) \delta' - i h (\gamma_1 (u) + 2 i \partial_{x_n} \varphi (x',
   0) \gamma_0 (u)) \delta 
\end{equation}
where $\gamma_0(u):=u_{|x_n = 0^+}$ and $\gamma_1(u):=D_{x_n}u_{|x_n = 0^+}=-ih\partial_{x_n}u_{|x_n = 0^+}$ are the first semi-classical traces.
\newline We now construct a local parametrix for $P_{\varphi}$. 
\newline \noindent Let $\chi (x,\xi) \in S^0$ such that $\chi = 1$ for sufficiently large $|\xi|$ as well as in a \nhd of $\supp(\chi_-)$ with moreover
$$\supp( \chi) \cap p^{-1}_{\varphi} (\{0\})=\emptyset.$$
Note that it is indeed possible because the real null set $p^{- 1}_{\varphi} (\{0\})$ is bounded in $\xi$ and, using Lemma \ref{Lemme sur les racines}, 
the roots of $p_{\VP}$ in $\xi_n$ are not real. 

\noindent We define
\[ e_0 (x, \xi) = \frac{\chi(x, \xi)}{p_{\varphi} (x, \xi)} \in
   S^{- 2}. \]
One may find $e_1 \in S^{-3}$ such that  $E = \Op (e_0+he_1)$ satisfies, for some $R_2 \in S^{-2}$,
\[ E \circ P_{\varphi} = \Op( \chi) + h^2 R_2 . \]
Indeed, by symbolic calculus, one may verify that $e_1=\chi \frac{\partial_x p_{\VP}.\partial_\xi p_{\VP}}{p^3_{\VP}}$. In the sequel, we shall denote $e:=e_0+he_1$.
\newline \noindent We set the new quantities
\begin{equation}\label{definitionsw}
  w_1 := \gamma_0 (u),\ w_0 := \gamma_1 (u) + 2 i \partial_{x_n} \varphi (x',
  0) \gamma_0 (u)
\end{equation}
and we apply our parametrix $E$ to the equation \eqref{extension} which may be written now in the form 
$$P_{\varphi} \underline{u} = \underline{f_1}+\frac{h}{i}w_0\delta -h^2 w_1 \delta'.$$ 
One computes the action of $E$ on $w_0$ and $w_1$ and finds
$$E\left(\frac{h}{i} w_0 \delta \right)(x',x_n)=\frac{1}{(2\pi h)^{n-1}} \iint e^{i (x'-y').\xi'/h} \hat{t_0}(x_n,x',\xi')w_0(y') d y' d \xi', $$
$$E\left(-h^2 w_1 \delta' \right)(x',x_n)=\frac{1}{(2\pi h)^{n-1}} \iint e^{i (x'-y').\xi'/h} \hat{t_1}(x_n,x',\xi')w_1(y') d y' d \xi', $$
where
$$\hat{t_0}(x_n,x',\xi')=\frac{1}{2i\pi}\int_{\R} e^{ix_n \xi_n/h} e(x,\xi) d \xi_n,$$
$$\hat{t_1}(x_n,x',\xi')=\frac{1}{2i\pi}\int_{\R} e^{ix_n \xi_n/h} \xi_n e(x,\xi) d \xi_n.$$
We note that the integral defining $\hat{t_0}$ is absolutely converging but that the integral defining $\hat{t_1}$ has to be understood is the sense of the
oscillatory integrals (see for instance \cite[Section 7.8]{Hoermander:V3}).
\newline \noindent Using the fact that $e(x,\xi', \xi_n)$ is holomorphic for large $|\xi_n|$ and actually a rational function with respect to $\xi_n$, we can change the contour $\R$ into the contour  defined by $\gamma = [- C \est{\xi'}, C \est{\xi'}] \cup \{\xi_n \in \mathbb{C};
| \xi_n | = C \est{\xi'} , \Im(\xi_n)>0\}$  oriented counterclockwise where $C>0$ is chosen sufficiently large so that $\chi=1$ if $|\xi_n|\ge C \est{\xi'}$.

Doing so, we get
\[ \underline{u} = E ( \underline{f_1}) + T_0 w_0 + T_1 w_1 + r_1 \]
where 
\begin{equation}\label{definition reste1} r_1 = (I - \Op ( \chi)) \underline{u} + h^2 R_2\underline{u}\end{equation}
and, if $j = 0, 1$ and $x_n > 0$, the tangential operators $T_j$ of symbols
\begin{equation}\label{symboletangentiel}
 \hat{t_j} (x, \xi') = \frac{1}{2 i \pi} \int_{\gamma} e^{i x_n (\xi_n / h)} e(x', x_n, \xi', \xi_n) \xi^j_n d \xi_n, 
\end{equation}
\newline \noindent The symbols $1-\chi$ and $\chi_-$ are not in the same symbol class but it is known (see Lebeau-Robbiano \cite{LR1} and Le Rousseau-Robbiano \cite[Lemma 2.2]{LRR1}) that, 
since $\supp (1-\chi)\cap\supp \chi_-=\emptyset$, we have
$$(I-\Op(\chi)) \op(\chi_-) \in \bigcap_{N\in\mathbb{N}} h^N \Psi^{-N}.$$
Consequently, recalling \eqref{definition reste1}, one has the estimate
\begin{equation}\label{reste1}\| r_1 \|_2 \leqslant C h \|\underline{v}\| = C h \| v\|_{L^2(x_n>0)}\le C h \| v\|_{H^1(x_n>0)}.\end{equation}
\par We now choose $\chi_1(x,\xi')\in S^0_\T$  so that $\supp( \chi_-)\subset \{\chi_1=1\}$, $\chi_1$ 
is supported in $\mathcal{E}_{\alpha}$ and   $\chi=1$ in a \nhd of $\supp( \chi_1)$.
\newline \noindent We set $t_j=\hat{t_j} \chi_1$ for $j=0,1$ which allows us to get
\begin{equation}\label{resolutionu}\underline{u}=E(\underline{f_1})+\op(t_0)w_0+\op(t_1)w_1+r_1+r_2,\end{equation}
where 
$$
r_2=\op((1-\chi_1)\hat{t_0})w_0+\op((1-\chi_1)\hat{t_1})w_1.
$$
One now notes that $|p_{\varphi} (x, \xi) | \geqslant c \est{\xi}^2$ on
$\supp ( \chi)$. Consequently, one obtains
\begin{equation}\label{estimationelliptique}
 \| E ( \underline{f_1})\|_1 \leqslant C \| \underline{f_1} \| = C \| f_1\|_{L^2(x_n>0)} .
\end{equation}
Moreover, using \eqref{symboletangentiel}, one may obtain
$$\forall l\in \mathbb{N}, \alpha\in \mathbb{N}^{n-1}, \beta\in \mathbb{N}^{n-1}, \  |\partial_{x_n}^l \partial^{\alpha}_{x'} \partial^{\beta}_{\xi'} \hat{t_j}|\le C h^{-l}\est{\xi'}^{-1+j+l-|\beta|}.$$
Consequently, noting that $r_2$ does not involve derivations with respect to $x_n$ and that 
$\supp(1-\chi_1)\cap \supp(\chi_{-|x_n=0})=\emptyset$, one obtains
\begin{equation}\label{reste2}
  \| r_2 \|_1 \leqslant C h ( \| v \|_{H^{1}(x_n>0)}+ | D_{x_n}v_{|x_n=0} |_{-1/2})
\end{equation}
from the composition of tangential operators and using the following trace formula (see \cite[page 486]{LR2})
\[|\psi_{|x_n = 0} | \leqslant C h^{- 1 / 2} \| \psi\|_{H^1(x_n>0)} . \]
\par Regarding the two last terms, we use that $\mu(x,\xi')<0$  for $(x,\xi')\in \supp(\chi_1)$. Hence, by Lemma \ref{Lemme sur les racines}, $p_{\VP}(x,\xi',\xi_n)^{-1}$ is an holomorphic function of $\xi_n$ on $\{ \Im(\xi_n) \ge 0 \}$ for $(x,\xi')\in \supp(\chi_1)$. 
\newline \noindent Recalling the form of $e$, one consequently has, for $j=0,1$,
\begin{eqnarray*}
(t_j)(x,\xi')&=& \frac{1}{2i\pi}\chi_1(x,\xi')\left( \int_\gamma e^{ix_n\xi_n/h}\frac{\xi_n^j}{ p_{\VP} (x,\xi',\xi_n)} d\xi_n+h \int_\gamma e^{ix_n\xi_n/h}\frac{\xi_n^j (\partial_x p_{\VP}.\partial_\xi p_{\VP})(x,\xi',\xi_n)}{ p^3_{\VP} (x,\xi',\xi_n)} d\xi_n\right)\\
&=&0.
\end{eqnarray*}
\par We shall now address the traces terms. We take the first two traces at $x_n=0^+$ of \eqref{resolutionu} which consequently gives, for $j=0,1$,
$$\gamma_j(u)= \gamma_j(E(\underline{f_1}))+\gamma_j(r_1)+\gamma_j(r_2).$$
Summing up equations \eqref{reste1}, \eqref{estimationelliptique} and \eqref{reste2}, one now deduces by trace formula
$$
h^{1/2}  |\gamma_0(u)|_{1/2}+h^{1/2}|\gamma_1(u)|_{-1/2} \leq C (\| f_1\|_{L^2(x_n>0)} +  h \|v\|_{H^1(x_n>0)}+ h| D_{x_n}v_{|x_n=0} |_{-1/2}).
$$ 
Using \eqref{resolutionu} again, one may deduce 
$$\|u\|_{H^1(x_n>0)}+h^{1/2}  |\gamma_0(u)|_{1/2}+h^{1/2}|\gamma_1(u)|_{-1/2}\le C( \|f_1\|_{L^2(x_n>0)} + h\|v\|_{H^1(x_n>0)} + h| D_{x_n}v_{|x_n=0} |_{-1/2}).$$
We finally come back to the original unknowns.  One has 
$$\gamma_0(u)=\op(\chi_-) v_{|x_n=0} \ \text{ and } \ \op(\chi_-) D_{x_n} v_{|x_n=0}=\gamma_1(u)-[D_{x_n},\op(\chi_-)]v_{|x_n=0}$$
which, using Lemma \ref{regles} and \eqref{estimation second membre}, allows us to get
\begin{eqnarray}\label{traces elliptiques}
&\|\op(\chi_-)v\|_{H^1(x_n>0)}+h^{1/2}|\op(\chi_-) v_{|x_n=0}|_{1/2}+h^{1/2}|\op(\chi_-) D_{x_n} v_{|x_n=0}|_{-1/2}&\\
&\le C (\| P_{\VP} v\|_{L^2(x_n>0)} + h \|v\|_{H^1(x_n>0)}+ h| D_{x_n}v_{|x_n=0} |_{-1/2}).&\nonumber
\end{eqnarray}

\subsection{Estimates in zone $\mu>-(\partial_{x_n}\varphi)^2$}

We denote by $v=e^{\VP/h}g$ and
\begin{align}
 v_0&=v_{|x_n=0}-\left(  e^{\VP/h} \right) _{|x_n=0}g_0\in H^{1/2}(x_n=0),\nonumber\\
v_1&=(D_{x_n}v)_{|x_n=0}+i(\partial_{x_n}\VP)_{|x_n=0}v_0+\left(  e^{\VP/h} \right) _{|x_n=0}(ihg_1+i(\partial_{x_n}\VP)_{|x_n=0}g_0)\in H^{-1/2}(x_n=0).\label{eq 2 : mu grand}
\end{align}
We have $\supp v_0\subset\{x'\in\R^{n-1},\  x_1\ge0\}$ and $\supp v_1\subset\{x'\in\R^{n-1},\  x_1\le0\}$. We consider $v_0$ and $v_1$ as unknown in the problem and in the sequel the goal is to obtain an equation on $v_0$ and $v_1$.
\newline \noindent The boundary conditions take the following form
\begin{align}
 v_{|x_n=0}&=v_0+G_0,\nonumber\\
(D_{x_n} v)_{|x_n=0}&= v_1-i(\partial_{x_n}\VP)_{|x_n=0}v_0+G_1,\label{eq 3 : mu grand}
\end{align}
where, following \eqref{eq 2 : mu grand}, we have
\begin{align}
 |G_0|_{1/2}&\le|e^{\VP/h}g_0|_{1/2},\nonumber \\
|G_1|_{-1/2}&\le h|e^{\VP/h}g_1|_{-1/2}+C|e^{\VP/h}g_0|_{1/2}.\label{eq 4 : mu grand}
\end{align}

We remark that if $g_0$ is fixed on $x_1<0$ and $g_1$ is fixed on $x_1>0$ we can extend $g_0$ and $g_1$ on $\R^{n-1}$ such that 
$$|e^{\VP/h}g_0|_{H^{1/2}(x_1<0)}\le |e^{\VP/h}g_0|_{1/2}\le 2|e^{\VP/h}g_0|_{H^{1/2}(x_1<0)}$$ 
and  $$|e^{\VP/h}g_1|_{H^{-1/2}(x_1>0)}\le |e^{\VP/h}g_1|_{-1/2}\le 2|e^{\VP/h}g_1|_{H^{-1/2}(x_1>0)}.$$ These extensions depend on  $h$.

Let $${\cal F}_\alpha=\{((x,\xi')\in\R^n\times\R^{n-1},\ \mu(x,\xi')\ge -(1-\alpha)(\partial_{x_n}\VP)^2\}$$ where $\alpha $ is small enough.
\newline \noindent Let $\chi_+(x,\xi')$ supported in ${\cal F}_{2\alpha}$ and satisfying $\chi_+=1$ in a \nhd of ${\cal F}_{3\alpha}$. Obviously $\chi_+\in S^0_\T$ because $\chi_+=1$ when $|\xi'|$ is large enough. 
\newline \noindent Let $u=\op(\chi_+)v$. We have
\begin{align}
& P_\VP u=\op(\chi_+)P_\VP v+[P_\VP,\op(\chi_+)]v=f_1 \textrm{ where }\nonumber\\
& \|f_1\|_{L^2(x_n>0)}\le C\|P_\VP v\|_{L^2(x_n>0)}+Ch\|v\|_{H^1(x_n>0)}.\label{eq 6, mu grand}
\end{align}
Let $\chi_1$ supported in ${\cal F}_{\alpha}$ such that $\chi_1=1$ on a \nhd  of ${\cal F}_{2\alpha}$, in particular on  $\supp\chi_+$.
When the roots $\rho_j$ are well defined (see the Lemma \ref{Lemme sur les racines}), we have by \eqref{pphi}  $\rho_1+\rho_2=-2i(\partial_{x_n}\VP)  $ and $\rho_1\rho_2=q_2+2iq_1  $. 
Hence, we obtain
\begin{align}
 (D_{x_n}-\op(\rho_2\chi_1))(D_{x_n}-\op(\rho_1\chi_1))u&=D_{x_n}^2u-D_{x_n}\op(\rho_1\chi_1)u-\op(\rho_2\chi_1)D_{x_n}u\nonumber\\
&\quad +\op(\rho_2\chi_1)\op(\rho_1\chi_1)u\nonumber\\
&=D_{x_n}^2u+\op(2i(\partial_{x_n}\VP)\chi_1 )D_{x_n}u+\op((q_2+2iq_1)\chi_1^2)u\nonumber\\
&\quad -[D_{x_n},\op(\rho_1\chi_1)]u+\op(R_1)u\label{eq 7, mu grand}
\end{align}
where $R_1\in hS^1_\T$ is given by symbolic calculus. 
\newline \noindent By Lemma \ref{regles}, the symbol of $[D_{x_n},\op(\rho_1\chi_1)]$ belongs to $hS^1_\T$. Thus, we have 
\begin{equation}\label{eq 8, mu grand}
(D_{x_n}-\op(\rho_2\chi_1))(D_{x_n}-\op(\rho_1\chi_1))u=f_2
\end{equation}                                
where 
\begin{align}
f_2= &P_\VP u-\op(2i(\partial_{x_n}\VP)(1-\chi_1))D_{x_n}\op(\chi_+)v-\op((q_2+2iq_1)(1-\chi_1^2))\op(\chi_+)v& \label{eq 8', mu grand}\\
& -[D_{x_n},\op(\rho_1\chi_1)]u+\op(R_1)u.&\nonumber
\end{align}
By  \eqref{eq 6, mu grand}, \eqref{eq 7, mu grand} and \eqref{eq 8', mu grand},  using that $(1-\chi_1)\chi_+=0$, $(1-\chi_1^2)\chi_+=0$ and $\|u\|_{H^1 }\le C\|v\|_{H^1 }$,
we obtain by symbolic calculus
\begin{align}
 \|f_2\|_{L^2(x_n>0)} \le C\|P_\VP v\|_{L^2(x_n>0)} +Ch\|v\|_{H^1(x_n>0)}.
\label{eq 9, mu grand}
\end{align}

\subsubsection{Estimate of $(D_{x_n}-\op(\rho_1\chi_1))u$}
Denoting $z=(D_{x_n}-\op(\rho_1\chi_1))u\in L^2(x_n>0)$ (since $g\in H^1(\R^n)$), we have by \eqref{eq 8, mu grand}
\begin{align}
 (D_{x_n}-\op(\rho_2\chi_1))\underline{z}=\underline{f_2}-ihz_{|x_n=0}\delta_{x_n=0}.\label{eq 11, mu grand}
\end{align}
Let $\chi\in S^0$ such that $\chi=1$ if $|\xi|$ is large, $\chi=1$ in a \nhd of $\supp \chi_+\times \R_{\xi_n}$ and $$\supp \chi\cap \{(x,\xi)\in \R^n\times\R^n,\ \xi_n-\rho_2(x,\xi')=0\}=\emptyset.$$
This is indeed possible. If $|\xi'|$ is large enough then $q_2(x,\xi')\ge C|\xi'|^2$ and $\Im \rho_2<-C|\xi'|$, $\mu(x,\xi')\ge 0$ and in this region $\xi_n-\rho_2(x,\xi')\not= 0$.
If $|\xi'|$ is bounded, $\rho_2(x,\xi')$ is also bounded and if $|\xi_n|$ large, $\xi_n-\rho_2(x,\xi')\not= 0$. Now, if $|\xi|$ is bounded then, on the support of $\chi_+$,
$\Im \rho_2<-\partial_{x_n}\VP(x)$ by Lemma~\ref{Lemme sur les racines} and $\xi_n-\rho_2(x,\xi')\not= 0$. 
\newline Moreover, by the same arguments, we obtain that $|\xi_n-\rho_2\chi_1|\ge c \est{\xi}$ on $\supp\chi$.

Observe now that $\xi_n-\rho_2\chi_1\in S\left(\est{\xi},dx^2+\frac{d\xi'^2}{\est{\xi'}^2}+\frac{d\xi_n^2}{\est{\xi}^2}\right)$.
The metric $\tilde g=dx^2+\frac{d\xi'^2}{\est{\xi'}^2}+\frac{d\xi_n^2}{\est{\xi}^2}$ is slowly varying, semi-classical $\sigma$-temperate,
the weights $\est{\xi}$, $\est{\xi'}$ are $\tilde g$-continuous and semi-classical $\sigma, \tilde g$-temperate (see definitions in Appendix A and Lemma~\ref{lem: bonne metrique bon poids} with $\eps=1$ with a change of variables and dimension).
\newline \noindent We set $$q(x,\xi) = \frac{\chi(x,\xi)}{\xi_n-(\rho_2\chi_1)(x,\xi')}\in S\left(\est{\xi}^{-1},dx^2+\frac{d\xi'^2}{\est{\xi'}^2}+\frac{d\xi_n^2}{\est{\xi}^2}\right).$$
We have, by symbolic calculus, 
$$\Op(q)\Op(\xi_n-\rho_2\chi_1)= \chi + R$$ where $R\in h  S\left(\est{\xi'}^{-1},dx^2+\frac{d\xi'^2}{\est{\xi'}^2}+\frac{d\xi_n^2}{\est{\xi}^2}\right)$. 
\newline \noindent Moreover, we can improve this by $R\in h S\left(\est{\xi}^{-1}, dx^2+\frac{d\xi'^2}{\est{\xi'}^2}+\frac{d\xi_n^2}{\est{\xi}^2}\right)$.
Indeed, the symbol of $\Op(q)\Op(\xi_n)$ is $q\xi_n$ and, using that $\rho_2\chi_1\in S\left(\est{\xi'},dx^2+\frac{d\xi'^2}{\est{\xi'}^2}+\frac{d\xi_n^2}{\est{\xi}^2}\right)$,
the symbol of $\Op(q)\Op(-\rho_2\chi_1)$ is $-q\rho_2\chi_1+R$ where  $R\in  h  S\left(\est{\xi}^{-1},dx^2+\frac{d\xi'^2}{\est{\xi'}^2}+\frac{d\xi_n^2}{\est{\xi}^2}\right)$.

\noindent Applying $\Op(q)$ in Formula \eqref{eq 11, mu grand}, we obtain

\begin{align}
 \underline{z}=\Op(q)\underline{f_2}-ih\Op(q)(z_{|x_n=0}\delta_{x_n=0})
+\Op(1-\chi)\underline{z}-\Op(R) \underline{z}.
\label{eq 12, mu grand}
\end{align}

In the sequel, we estimate each terms in the previous equality.
\newline \noindent First, we have
\begin{align}
&\|\Op(R)\underline{z}\|_{H^1(x_n>0)}\le \|\Op(R)\underline{z}\|_{1}\le Ch \|\underline{z}\|
 \le C h\|z\|_{L^2(x_n>0)},\nonumber\\
&\| \Op(q)\underline{f_2}\|_{H^1(x_n>0)}\le C\|\Op(q)\underline{f_2}\|_{1}\le C \| {f_2}\|_{L^2(x_n>0)}.\label{eq 13, mu grand}
\end{align}

\noindent Moreover, using Lemma \ref{regles}, we have
\begin{align}
 z=[D_{x_n}-\op(\rho_1\chi_1)]\op(\chi_+)v=\op(\chi_+)[D_{x_n}-\op(\rho_1\chi_1)]v+h\op(R_0)v
\label{eq 14, mu grand}
\end{align}
where $R_0\in S^0_\T$.

\noindent Let $y=[D_{x_n}-\op(\rho_1\chi_1)]v\in L^2(x_n>0)$. 
\newline \noindent We have, by \eqref{eq 14, mu grand}, $\underline{z}=\op(\chi_+)\underline{y}+h\underline{\op(R_0)v}$. Thus, we obtain
\begin{align}
 \Op(1-\chi)\underline{z}=\Op(1-\chi)\op(\chi_+)\underline{y}+h\Op(1-\chi)\underline{\op(R_0)v}.
\label{eq 15, mu grand}
\end{align}
Moreover, since $\supp(1-\chi)\cap \supp(\chi_+)=\emptyset$, one can apply \cite[Lemma 2.2]{LRR1} and get that 
$$\Op(1-\chi) \op(\chi_+) \in \bigcap_{N\in\mathbb{N}} h^N \Psi^{-N}$$
and, consequently,
\begin{align}
\|\Op(1-\chi)\op(\chi_+)\underline{y}\|_{1}\le h\|y\|_{L^2(x_n>0)}\le Ch\|v\|_{H^1(x_n>0)}.
\label{eq 16, mu grand}
\end{align}
We remark that $ D_{x_n}\Op(1-\chi) \in S^0$ because $\chi=1$ when $|\xi|$ large enough. Since $R_0$ is a tangential symbol, we get
\begin{align}
 \| D_{x_n}\Op(1-\chi)\underline{\op(R_0)v}\|_{L^2(x_n>0)}&\le \| D_{x_n}\Op(1-\chi)\underline{\op(R_0)v}\|\nonumber\\
&\le C \|\underline{\op(R_0)v}\|\le C\|v\|_{L^2(x_n>0)}.
\label{eq 16.1, mu grand}
\end{align}
 Following \eqref{eq 15, mu grand}, \eqref{eq 16, mu grand} and \eqref{eq 16.1, mu grand}, we have
\begin{align}
\|\Op(1-\chi)\underline{z}\|_{H^1(x_n>0)} \le \|\Op(1-\chi)\underline{z}\|_{1}\le Ch \|v\|_{H^1(x_n>0)}.
\label{eq 17, mu grand}
\end{align}

\noindent We have also
\begin{align}
 -ih\Op(q)(z_{|x_n=0}\delta_{x_n=0})&=\frac{1}{(2\pi h)^{n-1}}\int e^{ix'\xi'/h}\left(
\frac{1}{2i\pi}\int e^{ix_n\xi_n/h}q(x,\xi',\xi_n)d\xi_n,
 \right) \hat z_{|x_n=0}(\xi')d\xi'\nonumber\\
&=\op(t)(z_{|x_n=0})
\label{eq 19, mu grand}
\end{align}
where $\hat z_{|x_n=0}(\xi')$ is the Fourier transform with respect to $x'$ taken at $x_n=0$, the formula should be understood as an oscillating integral
and we have set
$$
 t(x,\xi')=\frac{1}{2i\pi}\int e^{ix_n\xi_n/h}q(x,\xi',\xi_n)d\xi_n.
$$
If one also requires $\chi=1$ for $|\xi_n|\ge C\est{\xi'}$ and  $\xi_n\in\C$ (which is compatible with the definition of $\chi$ on $\R$), we get
\begin{align}
 t(x,\xi')=\frac{1}{2i\pi}\int_\gamma e^{ix_n\xi_n/h}\frac{\chi(x,\xi',\xi_n)}{\xi_n-(\rho_2\chi_1)(x,\xi')}d\xi_n
\label{eq 20, mu grand}
\end{align}
where we integrate on the new contour $\gamma = [- C \est{\xi'}, C \est{\xi'}] \cup \{\xi_n \in \mathbb{C};
| \xi_n | = C \est{\xi'} , \Im(\xi_n)>0\}$.
\newline \noindent By \eqref{eq 20, mu grand}, we obtain that, for all $l\in\N$, all $\alpha,\ \beta\in \N^{n-1}$, there exists $C>0$ such that
\begin{align}
 |\partial_{x_n}^l\partial_{x'}^\alpha\partial_{\xi'}^\beta t(x,\xi')|\le Ch^{-l}\est{\xi'}^{l-|\beta|}.
\label{eq 21, mu grand}
\end{align}
Let now $\chi_2(x',\xi')\in\Cinf(\R^{n-1}\times \R^{n-1})$ a cut-off function such that $(\chi_+)_{|x_n=0}$ is supported in the interior of $\{ \chi_2=1\}$ and $(\chi_1)_{|x_n=0}=1$ on a \nhd of the support of $\chi_2$.
\newline \noindent One may write
\begin{align}
 \op(t)(z_{|x_n=0})=\op(t\chi_2)(z_{|x_n=0})+\op((1-\chi_2)t)(z_{|x_n=0}).
\label{eq 22, mu grand}
\end{align}
First, we get
\begin{align}
\op((1-\chi_2)t)(z_{|x_n=0})&=\op((1-\chi_2)t)\left[\left(D_{x_n}-\op(\rho_1\chi_1)\op(\chi_+)\right)v
\right]_{|x_n=0}\nonumber\\
&=\op((1-\chi_2)t)\op((\chi_+)_{|x_n=0})          \left( (D_{x_n}-\op(\rho_1\chi_1))v \right)_{|x_n=0} \nonumber\\
&\quad+\op((1-\chi_2)t)\left( \left[ 
D_{x_n}-\op(\rho_1\chi_1),\op(\chi_+)
\right] v
\right) _{|x_n=0}.
\label{eq 22.1, mu grand}
\end{align}
By symbolic calculus and as $\supp (1-\chi_2)\cap \supp\chi_+=\emptyset$, the asymptotic expansion of the symbols of $\op((1-\chi_2)t)\op((\chi_+)_{|x_n=0}) $ and $\op((1-\chi_2)t)\left[ 
D_{x_n}-\op(\rho_1\chi_1),\op(\chi_+)\right]_{|x_n=0}$ are null (taking account that $\left[ 
D_{x_n}-\op(\rho_1\chi_1),\op(\chi_+)\right]$ is a tangential operator). Hence by trace formula, we have
\begin{align}
\|\op(\est{\xi'}) \op((1-\chi_2)t)(z_{|x_n=0})\|_{L^2(x_n>0)}\le Ch\|v\|_{H^1(x_n>0)}+Ch|D_{x_n}v_{|x_n=0}|_{-1/2}.
\label{eq 23, mu grand}
\end{align}

\noindent On the support of $\chi_2$ we have $\chi_{|x_n=0}=0$ and, following \eqref{eq 20, mu grand}, we deduce
\begin{align}
(t\chi_2)(x,\xi')= \frac{1}{2i\pi}\chi_2(x,\xi')\int_\gamma e^{ix_n\xi_n/h}\frac{1}{\xi_n-\rho_2(x,\xi')}d\xi_n=0
\label{eq 24, mu grand}
\end{align}
by residue formula and since, by Lemma \ref{Lemme sur les racines}, $\Im \rho_2<0$ on the support of $\chi_2$.

To estimate the $L^2$ norm of $\d_{x_n}\op(t)(z_{|x_n=0})=\op(\d_{x_n}t)(z_{|x_n=0})$, we proceed in the same way. 
Actually, $\d_{x_n}t\in h^{-1}S_\T^1$ and we have to use \eqref{eq 22.1, mu grand}.
\newline \noindent By the same support argument used to obtain \eqref{eq 23, mu grand}, we get
\begin{align}
\|\d_{x_n} \op((1-\chi_2)t)(z_{|x_n=0})\|_{L^2(x_n>0)}\le Ch\|v\|_{H^1(x_n>0)}+Ch|D_{x_n}v_{|x_n=0}|_{-1/2}.
\label{eq 24.1, mu grand}
\end{align}
Analogously, the equation \eqref{eq 24, mu grand} become
\begin{align}
(\d_{x_n}t\chi_2)(x,\xi')=\frac{ h^{-1}}{2\pi}\chi_2(x,\xi')\int_\gamma e^{ix_n\xi_n/h}\frac{\xi_n}{\xi_n-\rho_2(x,\xi')}d\xi_n=0.
\label{eq 26.1, mu grand}
\end{align}
Following \eqref{eq 22, mu grand},  \eqref{eq 23, mu grand},  \eqref{eq 24, mu grand}, \eqref{eq 24.1, mu grand} and \eqref{eq 26.1, mu grand}, we deduce
\begin{align}
\| \op(t)(z_{|x_n=0})\|_{H^1(x_n>0)}\le Ch\|v\|_{H^1(x_n>0)}+Ch|D_{x_n}v_{|x_n=0}|_{-1/2}.
  \label{eq 25, mu grand}
\end{align}

\noindent Finally, using \eqref{eq 12, mu grand}, \eqref{eq 13, mu grand},  \eqref{eq 17, mu grand},  
\eqref{eq 19, mu grand}, 
\eqref{eq 25, mu grand}, and for all $h$ small enough, we obtain
\begin{eqnarray}\label{eq 26, mu grand}
 \|\ z\|_{H^1(x_n>0)}&\le& C\|f_2\|_{L^2(x_n>0)}+Ch\|v\|_{H^1(x_n>0)}+Ch
|(D_{x_n}v)_{|x_n=0}|_{-1/2}\nonumber \\  
&\le& C\|P_\VP v\|_{L^2(x_n>0)}+Ch\|v\|_{H^1(x_n>0)}+Ch
|(D_{x_n}v)_{|x_n=0}|_{-1/2}
\end{eqnarray}
where we have used \eqref{eq 9, mu grand}.

\subsubsection{Estimates of $v_0$ and $v_1$}
The goal is now to find an equation on $v_0$ and $v_1$ (see their definitions in \eqref{eq 2 : mu grand}) . We remind that $z=[D_{x_n}-\op(\rho_1\chi_1)]u $ and, since $u=\op(\chi_+)v$, we have then
\begin{align}
 &(D_{x_n}v)_{|x_n=0}-(\op(\rho_1\chi_1)v)_{|x_n=0}=f_3\nonumber\\
&\textrm{where } f_3=z_{|x_n=0}+(1-\op(\chi_+))(D_{x_n}v)_{|x_n=0}-\op(\rho_1\chi_1)(1-\op(\chi_+))v_{|x_n=0}.
\label{eq 27, mu grand}
\end{align}
Following \eqref{eq 3 : mu grand}, we have 
\begin{align}
 &v_1-\op((\rho_1+i(\partial_{x_n}\VP)_{|x_n=0})\chi_1)v_0=f_4\nonumber\\
&\textrm{where }f_4=f_3-G_1+\op((\rho_1\chi_1)_{|x_n=0})G_0+\op(i(\partial_{x_n}\VP)_{|x_n=0}(1-\chi_{1 |x_n=0}))v_0.
\label{eq 28, mu grand}
\end{align}
\begin{remark}
We may now explain the main difficulty faced to solve this equation. Coming back to our model case detailled in the beginning of section 2, one has
$$\rho_1+i (\partial_{x_n}\VP)_{|x_n=0}= i |\xi'|,$$
so that equation \eqref{eq 28, mu grand} takes the form, up to some remainder term and where $f$ is some data,
$$v_1-i \op(|\xi'|)v_0=f.$$
Since the symbol $|\xi'|$ does not satisfy the transmission condition, one cannot use the usual algebra of pseudodifferential operators.
We will overcome this problem writing a factorization of the form
$$|\xi'|=(\xi_1+i |\xi''|)^{1/2} (\xi_1-i |\xi''|)^{1/2} $$
and using that the operators of symbols $(\xi_1\pm i |\xi''|)^{1/2}$ preserves functions with support in $\{\mp x_1>0\}$.
\end{remark}
Coming back to our remainder estimates, one has
$$
 |D_{x_n}v_{|x_n=0}|_{-1/2}\le C|v_0|_{1/2}+C|v_1|_{-1/2}+C|G_1|_{-1/2}
$$
and, since $1-\chi_{1 |x_n=0}=(1-\chi_{1 |x_n=0})( 1-\chi_+)$,
\begin{equation}\label{difference}
 \op(1-\chi_{1 |x_n=0})-\op((1-\chi_{1 |x_n=0}))\op( 1-\chi_+)\in h \Psi^0_\T
\end{equation}
which gives
$$
 |\op{(1-\chi_{1 |x_n=0}})v_0|\le C|\op{(1-\chi_+)v_0}|+Ch|v_0|.
$$
Consequently, by \eqref{eq 26, mu grand}, 
\eqref{eq 27, mu grand},
 \eqref{eq 28, mu grand} 
and trace formula, we obtain

\begin{align}
 h^{1/2}|f_4|_{-1/2}&\le Ch^{1/2}|G_1|_{-1/2}+Ch^{1/2}|G_0|_{1/2}+C\|P_\VP v\|_{L^2(x_n>0)}+Ch\|v\|_{H^1(x_n>0)}+Ch|v_1|_{-1/2}
\nonumber\\
&\quad+Ch|v_0| +Ch^{1/2}|\op{(1-\chi_+)}D_{x_n}v_{|x_n=0}|_{-1/2}+Ch^{1/2}|\op{(1-\chi_+)}v_{|x_n=0}|_{1/2}.
\label{eq 29, mu grand}
\end{align}

\noindent Let now
\begin{align}
 &\lambda_-^s(\xi')=\left( \xi_1+i\sqrt{|\xi''|^2+(\eps \partial_{x_n}\VP(0))^2} \right)^s\nonumber,\\
&\lambda_+^s(\xi')=\left( \xi_1-i\sqrt{|\xi''|^2+(\eps \partial_{x_n}\VP(0))^2} \right)^s\nonumber.
\end{align}
Since $\lambda_-^s$ is holomorphic function in $\Im \xi_1>0$ and using an adapted version of the Paley-Wiener theorem (see Theorem 7.4.3 in \cite{Hoermander:V3}),
one gets that $\lambda_-^s$ is the Fourier transform of a distribution supported on $\{x_1\le0\}$ and, for analogous reason, 
$\lambda_+^s$ is the Fourier transform of a distribution supported on $\{x_1\ge0\}$. This justifies the indices $+$ and $-$.
\par We set $z_1=\op(\lambda_-^{-1/2})v_1\in L^2(x_n=0)$ and  $z_0=\op(\lambda_+^{1/2})v_0\in L^2(x_n=0)$. 
We have $\supp z_1\subset\{x_1\le0\}$ and $\supp z_0\subset\{x_1\ge0\}$. Moreover, by  \eqref{eq 28, mu grand},
$$
z_1-\op(\lambda_-^{-1/2})\op(((\rho_1+i\partial_{x_n}\VP)\chi_1)_{|x_n=0})\op(\lambda_+^{-1/2})z_0=f_5
$$
where 
\begin{equation}\label{eq 33, mu grand}h^{1/2}|f_5|\le C_\eps h^{1/2} |f_4|_{-1/2}.\end{equation}
As $z_1$ supported in $\{x_1\le0\}$, we have
\begin{align}
 r_{x_1>0}\op(\lambda_-^{-1/2})\op(((\rho_1+i\partial_{x_n}\VP)\chi_1)_{|x_n=0})\op(\lambda_+^{-1/2})z_0=-r_{x_1>0}f_5
\label{eq 34, mu grand}
\end{align}
where we denote here $r_{x_1>0}z:=z_{|x_1>0}$ the restriction of $z$ to $\{x_1>0\}$.

\noindent Following the notations introduced in Appendix \ref{Appendix A}, we note that $\lambda_\pm^{-1/2}\in S(\este{\xi}^{-1/2}, g)$.
\newline Moreover, one has $(\rho_1+i(\partial_{x_n}\VP)_{|x_n=0})\chi_1\in S(\este{\xi}, g)$. Consequently, by symbolic calculus, we have
\begin{equation}\label{eq 35, mu grand}
\op(\lambda_-^{-1/2})\op(((\rho_1+i\partial_{x_n}\VP)\chi_1)_{|x_n=0})\op(\lambda_+^{-1/2})
=\op(a)+h\op(b)
\end{equation}
where 
$$\left \{ \begin{array}{l} a=\lambda_-^{-1/2}((\rho_1+i\partial_{x_n}\VP)\chi_1)_{|x_n=0}\lambda_+^{-1/2},\\
   b\in S(\est{\xi'}\este{\xi'}^{-1}\este{\xi''}^{-1},g).
  \end{array}
\right.
$$

\begin{remark} To guide the reader, we shall also explain what happens at this milestone in our model case. As explained above, one has
$$(\rho_1+i\partial_{x_n}\VP)=i |\xi'|$$ 
and consequently, in some sense precised below,
$$a=i+O(\eps).$$
The very simple form of this operator explain that one should now get the estimates desired on $z_0$ and then on $v_0$ and $v_1$.
\end{remark}

\noindent The estimates on $b$ implies that $|b(x',\xi')|\le C/\eps^2$ (because 
$\est{\xi'}\este{\xi'}^{-1}\este{\xi''}^{-1}\le \eps^{-2}$) and $b\in S(1,g) $ 
with semi-norms depending on $\eps$. We can apply the Lemma \ref{lemme estimation norme} to get
\begin{align}\forall \eps>0,\ \exists C_\eps>0;\ |\op(b)z_0|_{L^2(x_1>0)}\le |\op(b)z_0|\le C_\eps |z_0|.
\label{eq 37, mu grand}
\end{align}

\noindent Second, we have $a=a_{|x'=0}+c$ and $a\in S(\est{\xi'}\este{\xi'}^{-1},g)$ which imply that $a\in S(1,g) $ with semi-norms depending on $\eps$ and $|\partial_{x'}c(x',\xi')|\le C\eps^{-1}$
 (because $\est{\xi'}\este{\xi'}^{-1}\le \eps^{-1}$).\\
We can apply Lemma \ref{estimation symbole nul en 0}
to $c$ and obtain, if the radius satisfies $\kappa\le \eps^2$ (see the hypotheses of Theorem 2) and since $v_0$ is supported in $B_\kappa$, 
\begin{align}
 |\op(c)z_0|_{L^2(x_1>0)}\le C\eps|z_0|+C_\eps h^{1/2}|v_0|_{1/2}.
\label{eq 38, mu grand}
\end{align}
Next, we use the assumption $|\partial_{x'}\VP(0)|\le \eps \partial_{x_n}\VP(0)$, the change of variables found in Lemma \ref{Lemme forme normale} and  Lemma \ref{Lemme sur les racines}. 
\newline \noindent Writing $V=\frac{\partial_{x'}\VP(0)}{\partial_{x_n}\VP(0)}$ and $\eta'=\frac{\xi'}{\partial_{x_n}\VP(0)}$, we have by \eqref{racines x=0,1}
$$\rho_1(0,\xi')+i\partial_{x_n}\VP(0)=i \partial_{x_n}\VP(0) \rho(\eta')$$
with
$$ \rho(\eta')=\sqrt{|\eta'|^2-|V|^2+2i\eta'.V} \ .$$
One may write
\begin{equation}\label{eq 39, mu grand}
a_{|x'=0}(\xi')=i\frac{ \partial_{x_n}\VP(0) \ \rho(\eta') \ (\chi_1)_{|x=0}}{\sqrt{|\xi'|^2+(\eps\partial_{x_n}\VP(0))^2}}
=i(\chi_1)(0,\xi')+id(\xi')
\end{equation}
where $d(\xi')=\tilde{d}(\eta')\times \chi_1(0,\xi')$ and
 $$\tilde{d}(\eta')=\frac{\rho(\eta')}{\sqrt{|\eta'|^2+\eps^2}}-1=
\frac{\rho(\eta')-\sqrt{|\eta'|^2+\eps^2}}{\sqrt{|\eta'|^2+\eps^2}}.$$
We remark that
$$\forall \eta', \ |\eta'|\le \sqrt{|\eta'|^2+\eps^2}\le |\eta'|+\eps$$
and, using now \eqref{racines x=0,2}, we thus obtain, since $|V|\le \eps$ and for $\eps$ sufficiently small,
$$ |\tilde{d}(\eta')|\le C \eps$$
since, by \eqref{racines x=0,3}, $|\eta'|\ge \delta$ on $\supp(\chi_1)\subset \mathcal{F}_{\alpha}$. 
\newline \noindent Finally, one has
$$|d(\xi')|\le C\eps \text{  and } d(\xi')\in S(1,g)$$
and we can apply Lemma \ref{lemme estimation norme} to deduce
\begin{align}
 |\op(d)z_0|_{L^2(x_1>0)}\le |\op(d)z|_0\le  (C\eps+C_\eps h^{1/2}) |z_0|.
\label{eq 40, mu grand}
\end{align}

\noindent Following \eqref{eq 34, mu grand}, \eqref{eq 35, mu grand} and \eqref{eq 39, mu grand}, we obtain
\begin{align}
 ir_{x_1>0}z_0+r_{x_1>0}\op(hb+c+id (\chi_1)_{|x_n=0}+(1-(\chi_1)_{|x_n=0}))z_0=-r_{x_1>0}f_5.
\label{eq 41, mu grand}
\end{align}
On the other hand, using \eqref{difference} again, we have
\begin{eqnarray*}
|\op(1-(\chi_1)_{|x_n=0})z_0|&\le& C|\op(1-\chi_+)z_0|+Ch |z_0|\\
& \le & C|\op(1-\chi_+)v_0|_{1/2}+Ch |v_0|_{1/2}
\end{eqnarray*}
and, following \eqref{eq 37, mu grand}, \eqref{eq 38, mu grand}, \eqref{eq 40, mu grand} and \eqref{eq 41, mu grand}, we deduce
\begin{eqnarray*}
 |z_0|_{L^2(x_1>0)}&\le& (C\eps+C_\eps h^{1/2})|z_0|+C_\eps h^{1/2}|v_0|_{1/2}+C|\op(1-\chi_+)v_0|_{1/2}+|f_5|\\
&\le& C\eps|z_0|+C_\eps h^{1/2}|v_0|_{1/2}+C|\op(1-\chi_+)v_0|_{1/2}+|f_5|.
\end{eqnarray*}
\noindent It is clear that $ |z_0|_{L^2(x_1>0)}= |z_0|$. Taking $\eps$ small enough, we have then
$$
\forall h\in (0,h_0], \ |z_0|\le C h^{1/2}|v_0|_{1/2}+C|\op(1-\chi_+)v_0|_{1/2}+C|f_5|.
$$
Using $z_0=\op(\lambda_+^{1/2})v_0$ and for $h_0$ small enough, we deduce 
$$
 |v_0|_{1/2}\le C|\op(1-\chi_+)v_0|_{1/2}+C |f_5|.
$$
Following \eqref{eq 29, mu grand} and \eqref{eq 33, mu grand} we have, using \eqref{eq 3 : mu grand} and for $h_0 $ small enough,
\begin{align}
 h^{1/2} |v_0|_{1/2}&\le Ch^{1/2}|G_1|_{-1/2}+Ch^{1/2}|G_0|_{1/2}+C\|P_\VP v\|_{L^2(x_n>0)}+Ch\|v\|_{H^1(x_n>0)}+Ch|v_1|_{-1/2}
\nonumber\\
&\quad +Ch^{1/2}|\op{(1-\chi_+)}D_{x_n}v_{|x_n=0}|_{-1/2}+Ch^{1/2}|\op{(1-\chi_+)}v_{|x_n=0}|_{1/2}.
\label{eq 45, mu grand}
\end{align}

\noindent Using \eqref{eq 45, mu grand} in \eqref{eq 28, mu grand} and by \eqref{eq 29, mu grand}, we obtain also, for $h_0$ small enough,

\begin{align*}
h^{1/2} |v_1|_{-1/2}&\le Ch^{1/2}|G_1|_{-1/2}+Ch^{1/2}|G_0|_{1/2}+C\|P_\VP v\|_{L^2(x_n>0)}+Ch\|v\|_{H^1(x_n>0)}+Ch|v_1|_{-1/2}
\nonumber\\
&\quad +Ch^{1/2}|\op{(1-\chi_+)}D_{x_n}v_{|x_n=0}|_{-1/2}+Ch^{1/2}|\op{(1-\chi_+)}v_{|x_n=0}|_{1/2}.
\end{align*}
One now remarks that the term $|v_1|$ on the right-hand side of this inequality can be absorbed if $h_0$ is sufficiently small.
Recalling now \eqref{eq 3 : mu grand} and \eqref{eq 4 : mu grand}, one gets, provided $h_0$ is small enough,
\begin{align}
h^{1/2} |v_{|x_n=0}|_{1/2}&\le Ch^{3/2}|e^{\VP/h}g_1|_{-1/2}+Ch^{1/2}|e^{\VP/h}g_0|_{1/2}+C\|P_\VP v\|_{L^2(x_n>0)}+Ch\|v\|_{H^1(x_n>0)}
\nonumber\\
&\quad +Ch^{1/2}|\op{(1-\chi_+)}D_{x_n}v_{|x_n=0}|_{-1/2}+Ch^{1/2}|\op{(1-\chi_+)}v_{|x_n=0}|_{1/2}
\label{eq 46, mu grand}
\end{align}
and
\begin{align}
h^{1/2} |D_{x_n}v_{|x_n=0}|_{-1/2}&\le Ch^{3/2}|e^{\VP/h}g_1|_{-1/2}+Ch^{1/2}|e^{\VP/h}g_0|_{1/2}+C\|P_\VP v\|_{L^2(x_n>0)}+Ch\|v\|_{H^1(x_n>0)}
\nonumber\\
&\quad +Ch^{1/2}|\op{(1-\chi_+)}D_{x_n}v_{|x_n=0}|_{-1/2}+Ch^{1/2}|\op{(1-\chi_+)}v_{|x_n=0}|_{1/2}.
\label{eq 47, mu grand}
\end{align}

\subsubsection{Estimate of $u$ in $H^1(x_n>0)$}
To estimate the $H^1$-norm of $u$ we use the Carleman technique.
\\First, we have
\begin{eqnarray}
 \|(D_{x_n}-\op(\rho_1\chi_1))u\|_{L^2(x_n>0)}^2&=&
\|(D_{x_n}-\op(\Re\rho_1\chi_1))u\|_{L^2(x_n>0)}^2+
\|\op(\Im\rho_1\chi_1)u\|_{L^2(x_n>0)}^2\nonumber\\
&+&2\Re((D_{x_n}-\op(\Re\rho_1\chi_1))u|-i\op(\Im\rho_1\chi_1)u)_{L^2(x_n>0)}\nonumber\\
&=&\|(D_{x_n}-\op(\Re\rho_1\chi_1))u\|_{L^2(x_n>0)}^2
+\|\op(\Im\rho_1\chi_1)u\|_{L^2(x_n>0)}^2\nonumber\\
& +&\Re(i[\op(\Im\rho_1\chi_1),D_{x_n}-\op(\Re\rho_1\chi_1)]u|u) \nonumber   \\
&-&h\Re(\op(\Im\rho_1\chi_1)u|u)_0\nonumber\\
&+&h \left(\big(L_0(D_{x_n}-\op(\Re\rho_1\chi_1))+L_1\op(\Im\rho_1\chi_1)\big)u|u\right),
\label{Carleman sur u}
\end{eqnarray}
where $(.,.)$ (resp. $(.,.)_0$) denotes the standard scalar product on $\{x_n>0\}$ (resp. $\{x_n=0\}$) and $L_j$ are tangential operators of orders 0.
\\Moreover, one has
\begin{equation}\label{Terme erreur Carleman} | (\op(\Im\rho_1\chi_1)u|u)_0 |\le C|u_{|x_n=0}|^2_{1/2} \end{equation}
and, for any $C'>0$,
\begin{align}
&h\left| \big((L_0(D_{x_n}-\op(\Re\rho_1\chi_1))+L_1\op(\Im\rho_1\chi_1)\big)u|u)\right|\nonumber \\
&\quad \quad\  \ \le \frac{1}{C'}\big(  
\|(D_{x_n}-\op(\Re\rho_1\chi_1))u\|_{L^2(x_n>0)}^2+
\|\op(\Im\rho_1\chi_1)u\|_{L^2(x_n>0)}^2
\big) +Ch^2\|u \|_{L^2(x_n>0)}^2.\label{Terme 2 erreur Carleman}
\end{align}
The commutator term $ i[\op(\Im\rho_1\chi_1),D_{x_n}-\op(\Re\rho_1\chi_1)]$ is a tangential 1-order operator and has $h\{\Im\rho_1\chi_1,\xi_n- \Re\rho_1\chi_1\}$ for principal symbol. 
\\Taking account that $p_\varphi=(\xi_n-\rho_1)(\xi_n-\rho_2)$, we have
\begin{align*}
 \{ \Re p_\varphi, \Im p_\varphi \}&=\frac1{2i}\{\bar p_\varphi ,p_\varphi\}\\
&=|\xi_n-\rho_2|^2\{\Im \rho_1,\xi_n-\Re (\rho_1)\}+O(\xi_n-\rho_1)+O(\xi_n-\bar \rho_1).
\end{align*}
Hence by hypothesis \eqref{hypohormander} we obtain that $$\Im \rho_1=0, \ \xi_n=\Re \rho_1 \ \Rightarrow \ \{\Im \rho_1,\xi_n-\Re (\rho_1)\}>0.$$ 
Furthermore, noting that $\{\Im \rho_1,\xi_n-\Re (\rho_1)\}$ is in fact independent of $\xi_n$, we have
$$\Im \rho_1=0 \ \Rightarrow \ \{\Im \rho_1,\xi_n-\Re (\rho_1)\}>0$$ 
and this classically implies that there exists $C_1>0$ and $C_2>0$ such that, on the support of $\chi_1$, 
\begin{align*}
 \{\Im \rho_1,\xi_n-\Re (\rho_1)\}+C_1\langle \xi'\rangle^{-1}|\Im \rho_1|^2\ge C_2\langle \xi'\rangle.
\end{align*}
Using the standard G\aa rding inequality (see e.g. \cite[Theorem 3.5.8]{Martinez:02}), we deduce
\begin{eqnarray}\label{Commutateur positif}&C_2 \| \op(\langle \xi'\rangle^{1/2})\op(\chi_1)u  \|_{L^2(x_n>0)}^2-Ch\|u\|_{L^2(x_n>0)}^2&\\
\le &\Re(i[\op(\Im\rho_1\chi_1),D_{x_n}-\op(\Re\rho_1\chi_1)]u|u)
+C_1\|\op(\Im \rho_1\chi_1)u\|_{L^2(x_n>0)} ^2.&   \nonumber 
\end{eqnarray}

\noindent On the other hand, as $\chi_1=1$ on the support of $\chi_+$, we have $\op(\chi_+)=\op(\chi_1)\op(\chi_+)+h\op(r_{-1})$ where $r_{-1}\in S^{-1}_\T$.
Thus
\begin{equation}\label{Rapport u et v}
 \|\op(\langle \xi'\rangle^{1/2})u  \|_{L^2(x_n>0)}^2\le C\|\op(\langle \xi'\rangle^{1/2}) \op(\chi_1)u  \|_{L^2(x_n>0)}^2+Ch\|v\|_{L^2(x_n>0)}^2.
\end{equation}
From \eqref{Carleman sur u}, \eqref{Terme erreur Carleman}, \eqref{Terme 2 erreur Carleman}, \eqref{Commutateur positif} and \eqref{Rapport u et v}, we get
\begin{eqnarray}\label{Carleman basses frequences}
&\frac12\|\op(\Im\rho_1\chi_1)u\|_{L^2(x_n>0)}^2+C_2 \| u  \|_{H^{1/2}(x_n>0)}^2 &\\
&\le \|(D_{x_n}-\op(\rho_1\chi_1))u\|_{L^2(x_n>0)}^2+Ch|u_{|x_n=0}|_{1/2}^2+
Ch\|v\|_{L^2(x_n>0)}^2\nonumber& 
\end{eqnarray}
for $C'$ chosen sufficiently large.
\\ \noindent Let now $C> 0$ given by Lemma~\ref{Lemme sur les racines} such that $\Im \rho_1(x,\xi')\ge \delta |\xi'|$ for $|\xi'|\ge C$. Let also $\chi_H\in \Con^\infty(\R^{n-1})$ such that 
$$\chi_H(\xi')=\begin{cases}
                1 \text{ if } |\xi'|\ge C+1,\\
0 \text{ if } |\xi'|\le C.
               \end{cases}
$$
Note that in particular $\chi_1=1$ on the support of $\chi_H$ so that
$$\op(\langle \xi'\rangle)\op(\chi_H)- \op(\chi_H \langle \xi'\rangle/(\Im \rho_1))\op((\Im \rho_1)\chi_1)\in h \Psi^0_\T.$$
Using symbolic calculus, we consequently obtain
\begin{align}\label{Hautes frequences}
 \| \op(\langle \xi'\rangle)\op(\chi_H)u\|_{L^2(x_n>0)}\le C\|\op((\Im \rho_1)\chi_1))u\|_{L^2(x_n>0)}+Ch\|u\|_{L^2(x_n>0)}.
\end{align}
If $h_0$ is small enough, using \eqref{Carleman basses frequences}, \eqref{Hautes frequences} and that $1-\chi_H$ is compactly supported, we get
\begin{align}\label{Carleman tangentiel}
 \| \op(\langle \xi'\rangle)u\|_{L^2(x_n>0)}&\le   \| \op(\langle \xi'\rangle)\op(\chi_H)u\|_{L^2(x_n>0)}+ \| \op(\langle \xi'\rangle)(1-\op(\chi_H))u\|_{L^2(x_n>0)}\nonumber\\
& \le C \|(D_{x_n}-\op(\rho_1\chi_1))u\|_{L^2(x_n>0)}+Ch^{1/2}|u_{|x_n=0}|_{1/2}+Ch^{1/2}\|v\|_{L^2(x_n>0)}.
\end{align}
We have also
\begin{align}\label{Estimation normale}
 \| D_{x_n}u\|_{L^2(x_n>0)}&\le  \| (D_{x_n}-\op(\rho_1\chi_1))u\|_{L^2(x_n>0)}+ \| \op(\rho_1\chi_1)u\|_{L^2(x_n>0)}\nonumber\\
&\le \| (D_{x_n}-\op(\rho_1\chi_1))u\|_{L^2(x_n>0)}+ C\|\op(\langle \xi'\rangle)u\|_{L^2(x_n>0)}.
\end{align}
Then, by \eqref{Carleman tangentiel} and \eqref{Estimation normale}, one gets
$$
 \| \op(\chi_+)v\|_{H^1(x_n>0)}\le C \|(D_{x_n}-\op(\rho_1\chi_1))u\|_{L^2(x_n>0)}+Ch^{1/2}|u_{|x_n=0}|_{1/2}+Ch^{1/2}\|v\|_{L^2(x_n>0)}.
$$
Using now \eqref{eq 26, mu grand}, we finally deduce
\begin{align}\label{estimation interieur +}
&\| \op(\chi_+)v\|_{H^1(x_n>0)}\nonumber \\
&\qquad \le C \left(\|P_\VP v\|_{L^2(x_n>0)}+h^{1/2}\|v\|_{H^1(x_n>0)}+h^{1/2}| v_{|x_n=0}|_{1/2}+h | (D_{x_n} v)_{|x_n=0}|_{-1/2}\right).
\end{align}
\subsection{End of the proof}
We now collect the results of Sections 2.1 and 2.2.\\
We first note that is possible to choose $\alpha$ so small that $\chi_-=1$ on $\supp(1-\chi_+)$.
\newline \noindent Doing so, one has $ \op{(1-\chi_+)}-\op{(1-\chi_+)}\op{(\chi_-)} \in h \Psi^0_\T$
which gives
\begin{eqnarray*}&|\op{(1-\chi_+)}D_{x_n}v_{|x_n=0}|_{-1/2}+|\op{(1-\chi_+)}v_{|x_n=0}|_{1/2}& \\
& \le C \left(|\op{(\chi_-)}D_{x_n}v_{|x_n=0}|_{-1/2}+|\op{(\chi_-)}v_{|x_n=0}|_{1/2} + h|D_{x_n}v_{|x_n=0}|_{-1/2}+h|v_{|x_n=0}|_{1/2}\right).&
\end{eqnarray*}
Summing up \eqref{traces elliptiques} and \eqref{eq 46, mu grand}, \eqref{eq 47, mu grand}, we now deduce the final trace estimate
\begin{eqnarray}\label{traces v}
&|D_{x_n}v_{|x_n=0}|_{-1/2}+|v_{|x_n=0}|_{1/2}&\nonumber\\
&\le C\left(h|e^{\VP/h}g_1|+|e^{\VP/h}g_0|_{1}+h^{-1/2}\|P_\VP v\|_{L^2(x_n>0)}+h^{1/2}\|v\|_{H^1(x_n>0)}\right)&
\end{eqnarray}
provided $h_0$ is sufficiently small.
\par Proceeding in the same way, one can deduce that, provided $h$ is sufficiently small,
\begin{eqnarray*}
\|v\|_{H^1(x_n>0)}\le C \left(\|\op(\chi_-) v\|_{H^1(x_n>0)}+\|\op(\chi_+)v\|_{H^1(x_n>0)}\right).
\end{eqnarray*}
Consequently, using \eqref{traces elliptiques}, \eqref{estimation interieur +} and \eqref{traces v}, one gets the desired result
$$
\|v\|_{H^1(x_n>0)}+ |D_{x_n}v_{|x_n=0}|_{-1/2}+|v_{|x_n=0}|_{1/2}\le C\left(|e^{\VP/h}g_0|_{1/2}+h|e^{\VP/h}g_1|_{-1/2}+h^{-1/2}\|P_\VP v\|_{L^2(x_n>0)}\right).
$$

%
%

\section{Proof of Theorem \ref{decroissancelog} }
\subsection{Different Carleman estimates}
We define $X = (- 1, 1) \times \Omega$ (where $(-1,1)$, diffeomorphic to $\R$, is considered as a manifold without boundary) and we split its boundary into $\partial X_N = (- 1, 1) \times \partial \Omega_N$
and $\partial X_D = (- 1, 1) \times \partial \Omega_D$.
\par Since $\Omega$ is a relatively compact smooth open set of $\R^d$, there exists some smooth function $f$ defined in a \nhd of $\partial \Omega$ such that, near $\partial \Omega$,
$$\left\{\begin{array}{l}
y \in \partial \Omega \Leftrightarrow f(y)=0, \\
y \in \Omega \Leftrightarrow f(y)>0. 
\end{array}
\right.$$
Moreover, near any point $y^*\in \partial \Omega$, one has $df\neq 0$. We may apply Lemma \ref{Lemme forme normale} with $p(y,\xi)=|\xi|^2$, transport $y^*$ to $0$ and get that, in new coordinates,
\begin{equation}\label{bord}
\left\{\begin{array}{l}
y \in \Omega \Leftrightarrow y_d>0, \\
y \in \partial \Omega \Leftrightarrow y_d=0,\\
\end{array}
\right. 
\end{equation}
with moreover $p(y,\xi)=\xi_d^2+r(y,\xi')$.
\newline \noindent The operator $-\Delta$ consequently takes, in some neighbourhood of $0$, the form $- \partial^2_{y_d} +l(y)\partial_{y_d}- Q \left( y, \partial_{y'} \right) $
with $\partial_{y'} = (\partial_{y_1}, \ldots, \partial_{y_{d - 1}})$ and $Q$ a smooth elliptic differential operator of order $2$.
\newline As in \cite{LR2}, one can find some function $e (y', y_d)$ normalised by $e (y', 0) = 1$ such
that the operator $P= - e \circ (\partial_{x_0}^2+ \Delta) \circ 1 / e$ takes, in some neighbourhood of $0$ and in
coordinates $x = (x_0, y)$, the form
\begin{equation}\label{formeoperateur}
  P = - \partial^2_{x_d} + R \left( x,  \frac{1}{i} \partial_{x'} \right)
\end{equation}
where $\partial_{x'} = (\partial_{x_0}, \ldots, \partial_{x_{d - 1}})$, the principal symbol of $R$ is real and satisfy, for some $c>0$,
\begin{eqnarray}\label{formeoperateur2}
 \forall (x,\xi'),\ \left\{\begin{array}{l}
    r (x, \xi') \geqslant c| \xi'|^2,\\
r(x,\xi')=|\xi_0|^2+r_0(x,\xi'_0), \end{array} \right.
\end{eqnarray}
with $\xi'_0 = (\xi_1, \ldots, \xi_{d - 1})$. 
\newline \noindent Moreover,  since $\Gamma$ is a relatively compact smooth submanifold of $\partial \Omega$, there exists some smooth function $k$ defined in a \nhd of $\Gamma$ such that, near $\Gamma$,
$$\left\{\begin{array}{l}
y \in \partial \Omega_D \Leftrightarrow f(y)=0 \text{ and } k(y)>0, \\
y \in \partial \Omega_N \Leftrightarrow f(y)=0 \text{ and } k(y)<0. 
\end{array}
\right .$$
Hence, if one works near $y^*\in \Gamma$, one has $dk(y^*)\neq 0$ and we additionally obtain that
\begin{equation}\label{borddubord}
\left\{\begin{array}{l}
y \in \partial \Omega_D \Leftrightarrow y_d=0 \text{ and } y_1>0,\\
y \in \partial \Omega_N \Leftrightarrow y_d=0 \text{ and } y_1<0,
\end{array}
\right. \ \text{ that is }  
\left\{\begin{array}{l}
x \in \partial X_D \Leftrightarrow x_d=0 \text{ and } x_1>0, \\
x \in \partial X_N \Leftrightarrow x_d=0 \text{ and } x_1<0, \\
\end{array}
\right. 
\end{equation}
and, along with \eqref{formeoperateur}, \eqref{formeoperateur2}, that $$r(0,\xi')=|\xi'|^2.$$
\par On the other hand, if $v$ satisfies \eqref{ondesabstrait}, then $\tilde{v}=e \times v$ satisfies the following equations
\begin{equation} \label{probleme e}
\left\{ \begin{array}{l}
       P\tilde{v}=e v_0 \\
       \tilde{v} = 0\\
       \partial_{\nu} \tilde{v} -i a \partial_{x_0} \tilde{v}+b \tilde{v} = v_1
     \end{array} \begin{array}{l}
       \text{ in } X,\\
       \text{ on } \partial X_D,\\
       \text{ on } \partial X_N,
     \end{array} \right.
\end{equation}
where, as in \cite{LR2}, we have used the notation $b =- \partial_{\nu} (1 /e)_{|\partial X}$.
\newline \noindent To localize and use the local form of our operator, we choose some  cut-off function $\theta$ with sufficiently small compact support. Let
$g =\theta \tilde{v}$ satisfying different problems, depending on the localization chosen.
\newline The first problem is a problem without boundary conditions (if the support of the cut-off is away from $\partial X$).
Moreover, if the support of the cut-off function intersects the boundary, we have three different cases to consider: one where the only boundary condition 
is of Dirichlet type, one where the only boundary condition is of Neumann type and the last one is a Zaremba boundary problem.
\vspace{0.2 cm}
\newline In each situation, we need some adapted Carleman estimates. In the three first situations, these results were obtained by Lebeau and Robbiano: it is Proposition 2 of \cite{LR1} and Proposition 1, Proposition
2 of \cite{LR2}, recalled below.
\newline
\par As before, we consider $B_\kappa:=\{x\in \R^{d+1};|x|\le \kappa\}$ and $\VP$ a $\Cinf$ function. We note $\Cinf_0(B_\kappa)$ the set of $\Cinf$ functions supported in $B_\kappa$.
\begin{proposition}\label{carlemansansconditionbord}
  Assume that \eqref{nonzero}, \eqref{hypohormander} hold. 
  Then, there exists $C, h_0 > 0$ such that for any $h \in (0, h_0)$ and for
  any $g \in \Cinf_0 (B_\kappa)$, the following inequality holds
 $$ \| g e^{\varphi / h}\|_{L^2(x_d>0)}  +\| h(\partial_{x} g) e^{\varphi / h}\|_{L^2(x_d>0)} \le C\left(h^{-1/2} \|h^2 P(g) e^{ \varphi / h}\|_{L^2(x_d>0)}+  |g e^{ \varphi / h}|+  |h (\partial_{x_d} g) e^{\varphi / h})|)\right).$$
\end{proposition}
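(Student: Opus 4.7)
The plan is the classical Carleman approach, adapted to the half-space $\{x_d>0\}$ so that the boundary $\{x_d=0\}$ produces explicit trace terms on the right-hand side. Set $v=e^{\varphi/h}g$, so $v\in\Cinf_0(B_\kappa)$, and consider the conjugated operator $P_\varphi=h^2 e^{\varphi/h}\circ P\circ e^{-\varphi/h}$ whose principal symbol is $p_\varphi(x,\xi)=(\xi_d+i\partial_{x_d}\varphi)^2+r(x,\xi'+i\partial_{x'}\varphi)$. The estimate to be proved is then equivalent to
\[
\|v\|_{L^2(x_d>0)}+\|hD_x v\|_{L^2(x_d>0)}
\leq C\bigl(h^{-1/2}\|P_\varphi v\|_{L^2(x_d>0)}+|v_{|x_d=0}|+|hD_{x_d}v_{|x_d=0}|\bigr).
\]

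First, I would split $P_\varphi=Q_2+iQ_1$ into its (formally) self-adjoint and skew-adjoint parts, with semi-classical principal symbols $q_2=\Re p_\varphi$ and $q_1=\Im p_\varphi$. Expanding
\[
\|P_\varphi v\|_{L^2(x_d>0)}^2=\|Q_2 v\|_{L^2(x_d>0)}^2+\|Q_1 v\|_{L^2(x_d>0)}^2+(i[Q_2,Q_1]v,v)_{L^2(x_d>0)}+\mathcal B(v),
\]
where $\mathcal B(v)$ gathers all the boundary contributions produced when integrating by parts in the cross term and in the $\|Q_jv\|^2$ terms. Because $Q_1,Q_2$ are second-order differential operators polynomial in $\partial_{x_d}$, $\mathcal B(v)$ is a sum of products of the two traces $v_{|x_d=0}$ and $(hD_{x_d}v)_{|x_d=0}$ with tangential operators of order at most one applied to them, and can therefore be controlled by $Ch(|v_{|x_d=0}|_1^2+|hD_{x_d}v_{|x_d=0}|^2)$, then downgraded (by standard trace interpolation) to what the statement asks, after we absorb regular powers of $h$.

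Next, I would use H\"ormander's hypothesis \eqref{hypohormander}: the pseudo-convexity inequality $\{q_2,q_1\}>0$ on the characteristic set of $p_\varphi$, combined with the ellipticity of $q_2^2+q_1^2$ outside that set, yields a lower bound of the form $\{q_2,q_1\}+C_0(q_2^2+q_1^2)\geq c\est{\xi}^2$ on a neighborhood of the support of $v$, after shrinking $\kappa$. Applying a sharp G\aa rding inequality to the commutator $i[Q_2,Q_1]$, which has principal symbol $h\{q_2,q_1\}$, yields
\[
(i[Q_2,Q_1]v,v)_{L^2(x_d>0)}\geq ch\|v\|_{H^1_{sc}(x_d>0)}^2-C_0(\|Q_2v\|^2+\|Q_1v\|^2)-C h^2\|v\|^2-\text{boundary terms},
\]
where the new boundary terms come again from integration by parts and are of the same type as $\mathcal B(v)$.

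Plugging this back and absorbing the $\|Q_jv\|^2$ terms in the left-hand side gives the desired interior estimate with an overall factor $h$, leading after division by $h$ to the announced $h^{-1/2}$ power on $\|P_\varphi v\|$. The main obstacle is the careful bookkeeping of the boundary contribution $\mathcal B(v)$: one must verify that the traces of $v$ and $hD_{x_d}v$ at $\{x_d=0\}$ enter exactly with the $|\cdot|$ norm claimed (and not in a higher-order Sobolev norm), which requires exploiting the special structure $P=-\partial_{x_d}^2+R(x,D_{x'})$ so that only these two traces appear after integrating by parts twice in $x_d$, and then interpolating between the naive $|\cdot|_1$ bound and the remaining factors of $h$ supplied by the commutator computation. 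This is exactly the argument of \cite[Proposition~2]{LR1}, to which we can otherwise refer.
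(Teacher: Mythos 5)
Your overall route (conjugation, splitting $P_\varphi=Q_2+iQ_1$, expanding $\|P_\varphi v\|^2$, pseudo-convexity plus sharp G{\aa}rding for the commutator, boundary terms from the integrations by parts in $x_d$) is the classical one, and it is consistent with the paper, which in fact gives no proof of Proposition~\ref{carlemansansconditionbord} at all but simply recalls it from Lebeau--Robbiano (\cite{LR1}, Proposition~2, and \cite{LR2}); so your closing appeal to that reference matches the paper's treatment. However, as an actual proof sketch your argument has a genuine gap at exactly the delicate point you yourself flag. The boundary form $\mathcal B(v)$ contains a term of the type $h\,\Re(\op(\beta_2)\gamma_0 v,\gamma_0 v)$ with $\beta_2$ a \emph{second-order} tangential symbol (essentially $2\partial_{x_d}\varphi\,q_2(x',0,\xi')$ plus lower order), so the honest bound is $|\mathcal B(v)|\le Ch\bigl(|v_{|x_d=0}|_1^2+|hD_{x_d}v_{|x_d=0}|^2\bigr)$, and the statement to be proved only allows $|v_{|x_d=0}|$ (the $L^2$ norm) on the right. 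Your claim that the $|\cdot|_1$ bound can be ``downgraded by standard trace interpolation, after absorbing regular powers of $h$'' is not a valid step: $|hD_{x'}\gamma_0 v|$ is not controlled by $|\gamma_0 v|$ uniformly in the tangential frequency (take $\gamma_0 v$ oscillating at frequency $R/h$ with $R$ large but independent of $h$), and no spare factor of $h$ from the commutator computation repairs this, since the loss is in the frequency variable, not in $h$.

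What actually closes this gap is one of the following. Either one uses a sign hypothesis $\partial_{x_d}\varphi>0$ on $\{x_d=0\}$, under which the dangerous order-two boundary term has a \emph{favorable} sign at large tangential frequencies and can simply be dropped --- but Proposition~\ref{carlemansansconditionbord} assumes only \eqref{nonzero} and \eqref{hypohormander}, with no sign condition at the boundary. Or, as in \cite{LR1} and in the spirit of Section~2 of this paper, one splits microlocally in the tangential frequency: on the region of bounded $|\xi'|$ one has $|\gamma_0 v|_1\le C|\gamma_0 v|$ so nothing is lost, while at large $|\xi'|$ the conjugated operator factorizes as $(hD_{x_d}-\op(\rho_2))(hD_{x_d}-\op(\rho_1))$ with $\Im\rho_1>0>\Im\rho_2$, and elementary first-order energy estimates in $x_d$ (using that \emph{both} traces are available on the right-hand side) give $|hD_{x'}\gamma_0 v|\lesssim |hD_{x_d}v_{|x_d=0}|+h^{-1/2}\|P_\varphi v\|_{L^2(x_d>0)}+|\gamma_0 v|$, which is exactly what is needed to reduce the boundary contribution to the $L^2$ norms appearing in the statement. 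Without one of these two mechanisms the sketch does not yield the proposition as stated; with the citation to \cite{LR1} it is of course acceptable, but then the intermediate ``interpolation'' step should be removed rather than asserted.
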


\begin{proposition}\label{carlemandirichlet}
  Assume that \eqref{nonzero}, \eqref{hypohormander} hold and that
  \[ \frac{\partial \varphi}{\partial x_d} > 0 \text{ on } \{x_d=0\} \cap B_\kappa. \]
  Then there exists $C, h_0 > 0$ such that for any $h \in (0, h_0)$, $g_0\in \Cinf(\{x_d=0\})$, and for
  any $g \in \Cinf_0 (B_\kappa)$ such that
  \[ g = 0 \text{ on } \{x_d=0\}, \]
  the following inequality holds:
 $$\| g e^{\varphi / h}\|_{L^2(x_d>0)}  +\| h(\partial_{x} g) e^{\varphi / h}\|_{L^2(x_d>0)} \le C h^{-1/2} \|h^2 P(g) e^{ \varphi / h}\|_{L^2(x_d>0)}.$$
\end{proposition}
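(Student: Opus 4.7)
The plan is to follow the classical positive-commutator Carleman scheme: conjugate the operator, decompose it into its formally self-adjoint and anti-self-adjoint parts, exploit the pseudo-convexity hypothesis \eqref{hypohormander} to obtain a positive-commutator estimate, and use the Dirichlet condition together with $\partial_{x_d}\varphi>0$ to eliminate the boundary contributions. Set $v=e^{\varphi/h}g$, so that $v\in\Cinf_0(B_\kappa)$ is supported in $\{x_d\ge 0\}$, satisfies $v_{|x_d=0}=0$, and $P_\varphi v= h^2 e^{\varphi/h}P(g)$ in $\{x_d>0\}$, where $P_\varphi=h^2 e^{\varphi/h}Pe^{-\varphi/h}$. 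Since $\|h\partial_x g\,e^{\varphi/h}\|_{L^2(x_d>0)}\le C(\|h\partial_x v\|_{L^2(x_d>0)}+\|v\|_{L^2(x_d>0)})$, the announced estimate reduces to
\[ h\,\|v\|_{H^1_{sc}(x_d>0)}^2 \le C\,\|P_\varphi v\|_{L^2(x_d>0)}^2. \]
Decompose $P_\varphi=Q_2+iQ_1$ where $Q_2,Q_1$ are formally self-adjoint with respective principal symbols $\Re p_\varphi$ and $\Im p_\varphi$ as read off \eqref{pphi} (the lower-order corrections needed to symmetrise are $O(h)$ and harmless).

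On the half-space $\{x_d>0\}$, expanding $(Q_2v+iQ_1v,Q_2v+iQ_1v)_{L^2(x_d>0)}$ and moving operators by integration by parts in $x_d$ yields
\[
\|P_\varphi v\|_{L^2(x_d>0)}^2=\|Q_2v\|^2+\|Q_1v\|^2+\bigl(i[Q_2,Q_1]v,v\bigr)_{L^2(x_d>0)}+\mathcal{B},
\]
where $\mathcal{B}$ is a bilinear form in the traces $v_{|x_d=0}$ and $h(\partial_{x_d}v)_{|x_d=0}$. The commutator $i[Q_2,Q_1]$ is semi-classically of size $h$ with principal symbol $h\{\Re p_\varphi,\Im p_\varphi\}$; by \eqref{hypohormander} this Poisson bracket is strictly positive on $\{p_\varphi=0\}\cap(B_{\kappa_0}\times\R^{d+1})$. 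A standard convexification (adding a large positive multiple of $|\Re p_\varphi|^2+|\Im p_\varphi|^2$) followed by G\aa rding's inequality yields, for $\kappa$ small enough and some $c>0$,
\[
\bigl(i[Q_2,Q_1]v,v\bigr)_{L^2(x_d>0)}+C\bigl(\|Q_2v\|^2+\|Q_1v\|^2\bigr)\ge c\,h\,\|v\|_{H^1_{sc}(x_d>0)}^2-Ch^2\|v\|_{L^2(x_d>0)}^2.
\]

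The Dirichlet hypothesis $v_{|x_d=0}=0$ annihilates every summand of $\mathcal{B}$ involving $v_{|x_d=0}$, and a direct computation shows that the coefficient in front of $|h(\partial_{x_d}v)_{|x_d=0}|^2$ is a strictly positive constant times $\partial_{x_d}\varphi(x',0)$; by hypothesis, this is positive, so $\mathcal{B}\ge 0$ and may be discarded. Combining the two displays and absorbing the $h^2\|v\|^2$ error for $h\le h_0$ sufficiently small produces $h\|v\|_{H^1_{sc}(x_d>0)}^2\le C\|P_\varphi v\|^2_{L^2(x_d>0)}$, which is the desired inequality.

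The main technical obstacle is the passage from pointwise positivity of $\{\Re p_\varphi,\Im p_\varphi\}$ on the characteristic set to the coercive estimate on $v$; the convexification/G\aa rding step requires the support of $v$ to sit in a sufficiently small ball, possibly forcing a shrinking of $\kappa$ below $\kappa_0$. The bookkeeping of the boundary form $\mathcal{B}$ is elementary but tedious, and relies crucially on the combination of the Dirichlet condition with $\partial_{x_d}\varphi>0$ to eliminate every potentially unsigned boundary contribution.
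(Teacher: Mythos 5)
Your proposal is the classical positive--commutator proof, and its skeleton is sound; note first that the paper itself does not prove Proposition \ref{carlemandirichlet} at all: it is recalled verbatim from Lebeau--Robbiano (Proposition 2 of \cite{LR1}, Propositions 1--2 of \cite{LR2}), so the only comparison available is with those proofs and with the related computations the paper does carry out in Section 2.2.3. Your reduction to $h\|v\|^2_{H^1_{sc}(x_d>0)}\le C\|P_\varphi v\|^2_{L^2(x_d>0)}$, the exact expansion $\|P_\varphi v\|^2=\|Q_2v\|^2+\|Q_1v\|^2+(i[Q_2,Q_1]v,v)+\mathcal B$, the use of \eqref{hypohormander} on the characteristic set, and the observation that under the Dirichlet condition the boundary form collapses to a positive multiple of $\int_{x_d=0}\partial_{x_d}\varphi\,|h\partial_{x_d}v|^2\,dx'\ge 0$ are all correct and are exactly the ingredients of the cited proofs.

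The one step you state too quickly is the passage ``convexification $+$ G\aa rding''. The commutator $i[Q_2,Q_1]$ has a symbol of degree $3$ in $\xi$ (including $\xi_d^3$ and $\xi_d^2$ terms), and $v$ has a nonvanishing Neumann trace; so you can neither apply the semiclassical G\aa rding inequality in all variables on the half-space directly, nor replace $\|Q_jv\|^2$ by $(Q_j^2v,v)$ to argue at the symbol level, since that second integration by parts creates new boundary terms involving $(D_{x_d}^2v)_{|x_d=0}$ which are not controlled. The standard ways to close this are: (i) carry out the full integration-by-parts identity with purely differential manipulations, so that the positivity is checked on a quadratic form in $(v,h\partial_x v)$ with explicitly computed boundary contributions (the Fursikov--Imanuvilov route \cite{FI}, see also \cite{LL}); or (ii) first use $\|Q_1v\|^2$ and $\|Q_2v\|^2$ to eliminate the powers of $\xi_d$ and then apply only the \emph{tangential} G\aa rding inequality at fixed $x_d$, which is in essence what Lebeau--Robbiano do and what this paper does for the first-order factor $D_{x_n}-\op(\rho_1\chi_1)$ in \eqref{Carleman sur u}--\eqref{Commutateur positif}--\eqref{Carleman basses frequences}, where the microlocal factorization of $p_\varphi$ guarantees that no second normal derivative ever reaches the boundary. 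With one of these devices inserted, your argument is the standard proof of the quoted proposition; without it, the displayed lower bound $(i[Q_2,Q_1]v,v)+C(\|Q_2v\|^2+\|Q_1v\|^2)\ge ch\|v\|^2_{H^1_{sc}}-Ch^2\|v\|^2$ is asserted rather than proved.
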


\begin{proposition}\label{carlemanneumann}
  Assume that $a$ is smooth, \eqref{nonzero}, \eqref{hypohormander} hold and that the following hypotheses are fulfilled
  \[ \frac{\partial \varphi}{\partial x_d} > 0 \text{ on } \{x_d=0\} \cap B_\kappa, \]
  \[ 1 > a^2 \text{ on } \{x_d=0\} \cap B_\kappa, \]
  \[ (1 - a^2) \left( \frac{\partial \varphi}{\partial x_d} \right)^2
     > a^2 \left[ r \left( x, \frac{\partial \varphi}{\partial x'}
     \right) - a^2 \left( \frac{\partial \varphi}{\partial x_0}
     \right)^2 \right] \text{ on } \{x_d=0\} \cap B_\kappa. \]
  Then there exists $C, h_0 > 0$ such that for any $h \in (0, h_0)$, $g_1\in \Cinf(\{x_d=0\})$, and
  any $g \in \Cinf_0 (B_\kappa)$ such that
  \[ \partial_{x_d} g + i a \partial_{x_0} g - b g = g_1 \text{ on } \{x_d=0\}, \]
  the following inequality holds:
$$\| g e^{\varphi / h}\|_{L^2(x_d>0)}  +\| h(\partial_{x} g) e^{\varphi / h}\|_{L^2(x_d>0)}+| h(\partial_{x'} g) e^{\varphi / h}|
\le C\left(h^{-1/2}\|h^2 P(g) e^{ \varphi / h}\|_{L^2(x_d>0)}+ |h g_1 e^{ \varphi / h}|\right).$$
\end{proposition}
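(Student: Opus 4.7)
The plan is to follow the classical symmetric/antisymmetric splitting used by Lebeau--Robbiano (cf.\ the proof of Proposition~1 in \cite{LR2}). First, pass to the conjugated unknown $v=e^{\varphi/h}g$; the interior equation $P_\varphi v = h^2 e^{\varphi/h}P g$ has principal symbol $p_\varphi$ as written in \eqref{pphi}, and the boundary condition transforms into
\[
\partial_{x_d} v + ia\,\partial_{x_0} v - \bigl(h^{-1}(\partial_{x_d}\varphi + ia\,\partial_{x_0}\varphi) + b\bigr) v \;=\; e^{\varphi/h} g_1
\qquad\text{on }\{x_d=0\}.
\]
Write $P_\varphi = A + iB$ where $A$ (resp.\ $B$) is the formally self-adjoint operator with principal symbol $\Re p_\varphi = \xi_d^2 + q_2$ (resp.\ $h^{-1}\Im p_\varphi = 2(\partial_{x_d}\varphi)\xi_d + 2q_1$).

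Second, compute $\|P_\varphi v\|_{L^2(x_d>0)}^2 = \|Av\|^2 + \|Bv\|^2 + i\bigl([A,B]v,v\bigr) + \mathcal B(v)$, where the last term $\mathcal B(v)$ is a boundary quadratic form at $x_d=0$ produced by the integration by parts. The interior commutator $i[A,B]$ has principal symbol $h\{\Re p_\varphi, \Im p_\varphi\}$, which by hypothesis \eqref{hypohormander} satisfies a lower bound of the G{\aa}rding type once the elliptic region is added back; this classically yields
\[
\|Av\|^2 + \|Bv\|^2 + C h\, \|v\|_{H^1_{sc}(x_d>0)}^2 \;\leq\; \|P_\varphi v\|_{L^2(x_d>0)}^2 - \mathcal B(v) + O(h^2)\|v\|_{L^2}^2.
\]

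Third, and this is where the Neumann-type hypotheses enter, I would analyse $\mathcal B(v)$. Write $v_0 := v|_{x_d=0}$ and $v_1 := \partial_{x_d} v|_{x_d=0}$; the transformed boundary condition expresses $v_1$ as $-i a\, \partial_{x_0} v_0 + h^{-1}(\partial_{x_d}\varphi + i a\, \partial_{x_0}\varphi) v_0 + (\text{lower order}) + e^{\varphi/h} g_1$. Substituting into $\mathcal B$ produces (after grouping principal terms) a Hermitian quadratic form in $v_0$ whose principal symbol is
\[
(1-a^2)(\partial_{x_d}\varphi)^2 - a^2\bigl[r(x,\partial_{x'}\varphi) - a^2(\partial_{x_0}\varphi)^2\bigr],
\]
evaluated modulo positive contributions of $r(x,\xi')$ and the data $g_1$. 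The third assumption in the proposition is exactly the strong Lopatinski positivity of this form, so a sharp G{\aa}rding inequality on the boundary gives $\mathcal B(v) \geq c\, |v_0|_1^2 - C\,|hg_1 e^{\varphi/h}|^2 - Ch\,|v_0|_{1/2}^2$, the last term being absorbed when $h$ is small.

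The main obstacle is the bookkeeping in Step~3: one must identify the precise boundary contributions arising from the integrations by parts involving $A$ and $B$, rewrite them consistently in semiclassical norms, and check that the leading homogeneous part is precisely the quadratic form whose positivity is encoded in the third hypothesis. Once $\mathcal B(v)$ is controlled below, combining it with the interior estimate from Step~2 gives a bound on $\|v\|_{H^1_{sc}(x_d>0)} + |v_0|_1$. Finally, one recovers $\|g e^{\varphi/h}\|_{L^2} + \|h(\partial_x g) e^{\varphi/h}\|_{L^2} + |h(\partial_{x'} g)e^{\varphi/h}|$ from the definition of $v$ and the estimate just derived, using the trace formula $|\psi_{|x_d=0}| \leq C h^{-1/2}\|\psi\|_{H^1_{sc}(x_d>0)}$ to handle the $h^{-1/2}$ loss in the right-hand side.
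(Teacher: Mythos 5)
First, note that the paper does not actually prove Proposition \ref{carlemanneumann}: it is recalled verbatim from Lebeau--Robbiano (\cite{LR2}, Proposition 2), and the relevant proof there is the microlocal one whose scheme Section 2 of this paper adapts to the Zaremba condition. Measured against that, your Steps 1--2 are the standard generic preliminaries, but Step 3 -- the heart of your argument -- contains a genuine gap. After substituting the transformed boundary condition, the boundary quadratic form $\mathcal B(v)$ is \emph{not} positive for all tangential frequencies, and its principal symbol is not the $\xi'$-independent quantity you wrote: the integration by parts produces, besides $2(\partial_{x_d}\varphi)|v_1|^2$, the term $-2(\partial_{x_d}\varphi)\,q_2(x,\xi')|v_0|^2$ (plus $q_1$ cross terms), so the full tangential symbol $r(x,\xi')$ enters with an \emph{unfavourable} sign. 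In the model case $P=-\Delta$, $a=0$, $\varphi=\varphi(x_d)$, one finds
\begin{equation*}
\mathcal B(v)\;\simeq\;2\varphi'\bigl(|v_1|^2-(|\xi'|^2-\varphi'^2)|v_0|^2\bigr),\qquad v_1=-i\varphi' v_0+O(\mathrm{data}),
\end{equation*}
so the symbol is $2\varphi'(2\varphi'^2-|\xi'|^2)$, which is negative for $|\xi'|^2>2\varphi'^2$ -- exactly the high-frequency region where the claimed control of $|v_0|_1$ must come from, and where $p_\varphi$ has one root $\rho_1$ with positive imaginary part, so the traces are not recoverable from interior ellipticity either. Note also that for $a=0$ your third hypothesis is vacuous, whereas positivity of the boundary form certainly is not: the third hypothesis, being independent of $\xi'$, cannot be ``exactly the strong Lopatinski positivity of this form'', and no sharp G{\aa}rding inequality on the boundary yields $\mathcal B(v)\ge c|v_0|_1^2-\dots$.

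What the third hypothesis really encodes is the non-vanishing (ellipticity of order $1$) of the reduced boundary symbol, and the proof in \cite{LR2} uses it microlocally, in the spirit of Section 2 of the present paper: split according to the sign of $\mu$; where $\mu<0$ both roots of $p_\varphi$ in $\xi_d$ lie in $\{\Im\xi_d<0\}$ (Lemma \ref{Lemme sur les racines}) and a parametrix/contour-deformation argument bounds both traces by the interior data; where $\mu>-(1-\alpha)(\partial_{x_d}\varphi)^2$, factor $P_\varphi$ as $(D_{x_d}-\op(\rho_2\chi_1))(D_{x_d}-\op(\rho_1\chi_1))$, invert the $\rho_2$-factor to obtain the trace relation $v_1\approx\op(\rho_1)v_0$ modulo controlled terms, and plug in the boundary condition: the resulting tangential operator has symbol essentially $\rho_1+i(\partial_{x_d}\varphi+ia\,\partial_{x_0}\varphi)+a\xi_0$, whose ellipticity is precisely what the displayed inequality on $(1-a^2)(\partial_{x_d}\varphi)^2$ guarantees; this gives $|v_0|_1$, and the interior $H^1$ bound follows from a positive-commutator (Carleman) estimate on the remaining first-order factor, as in Section 2.2.3. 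Without this microlocal splitting and boundary reduction, your argument does not close; with it, the boundary-form positivity you invoke is never needed globally.
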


\vspace{0.2 cm}
\noindent However, due to the low regularity of the function $a$ here, we cannot apply this result directly and shall derive the following suitable result by a perturbation argument.
\begin{corollary}\label{carlemanneumann2}
 Assume that \eqref{nonzero} and \eqref{hypohormander} hold. If $a$ is a function such that $a(x) \tendvers{x}{0} 0$,
$$\frac{\partial \varphi}{\partial x_d} > 0 \text{ on } \{x_d=0\} \cap B_\kappa $$ 
then, there exists \ $\kappa, C, h_0 > 0$, such that, for any $h \in (0,
  h_0)$, $g_1\in L^{2}(\{x_d=0\})$ and any $g \in H^1 (\R^n)$ supported in $B_\kappa$ which satisfies
  \[ P(g)\in L^2(\R^n) \text{ and } \partial_{x_d} g +i a \partial_{x_0} g- b g = g_1  \text{ on }  \{x_d=0\} , \]
  the following inequality holds:
$$
\| g e^{\varphi / h}\|_{L^2(x_n>0)}  +\| h(\partial_{x} g) e^{\varphi / h}\|_{L^2(x_n>0)}\le C\left(h^{-1/2}\|h^2 P(g) e^{ \varphi / h}\|_{L^2(x_d>0)}+  |h g_1 e^{ \varphi / h}|\right).
$$
\end{corollary}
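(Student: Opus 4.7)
The approach is a perturbation argument based on applying Proposition~\ref{carlemanneumann} with the trivial coefficient $a \equiv 0$, treating the genuine $i a \partial_{x_0} g$ contribution as a small perturbation. First, I would observe that when $a$ is replaced by $0$, the convexity hypotheses of Proposition~\ref{carlemanneumann} collapse to $1>0$ and to $(\partial\varphi/\partial x_d)^2 > 0$, both trivially satisfied given \eqref{nonzero} and the assumption $\partial \varphi/\partial x_d >0$ on $\{x_d=0\} \cap B_\kappa$. Hence the Carleman estimate for Neumann data with $a \equiv 0$ is available on some $B_{\kappa_0}$, and in particular it controls the tangential trace term $|h(\partial_{x'} g)e^{\varphi/h}|$.

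Next, for smooth $g$ supported in $B_\kappa$ satisfying $\partial_{x_d} g + ia\partial_{x_0} g - bg = g_1$ on $\{x_d=0\}$, I would set $\tilde g_1 := g_1 - ia\partial_{x_0} g$, view $g$ as solving the Neumann problem $\partial_{x_d} g - bg = \tilde g_1$, and apply the $a=0$ version of the estimate. This yields
\[
\| g e^{\varphi/h}\|_{L^2(x_d>0)} + \|h(\partial_x g)e^{\varphi/h}\|_{L^2(x_d>0)} + |h(\partial_{x'}g)e^{\varphi/h}| \le C\bigl(h^{-1/2}\|h^2 Pg\, e^{\varphi/h}\|_{L^2(x_d>0)} + |h\tilde g_1 e^{\varphi/h}|\bigr).
\]
Since $a(x) \to 0$ as $x\to 0$, for any $\varepsilon>0$ I can shrink $\kappa\le \kappa_0$ so that $\|a\|_{L^\infty(B_\kappa\cap\{x_d=0\})} < \varepsilon$; because $g$ is supported in $B_\kappa$,
\[
|h\tilde g_1 e^{\varphi/h}| \le |hg_1 e^{\varphi/h}| + \varepsilon \, |h(\partial_{x_0} g)e^{\varphi/h}| \le |hg_1 e^{\varphi/h}| + \varepsilon\,|h(\partial_{x'}g)e^{\varphi/h}|.
\]
Taking $\varepsilon$ small enough that $C\varepsilon < 1/2$ allows the $|h(\partial_{x'}g)e^{\varphi/h}|$ contribution to be absorbed by the left-hand side, yielding exactly the desired inequality for smooth $g$ and smooth $g_1$.

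Finally, to reach the full generality of the statement ($g\in H^1$ with $Pg\in L^2$, $g_1\in L^2$), I would conclude by a density/regularization argument. Approximating $g$ by a sequence of smooth compactly supported functions obtained by a suitable mollification (carried out so as to remain supported near $B_\kappa$), applying the estimate to each approximation, and passing to the limit works since every norm occurring in the inequality is continuous with respect to the relevant $H^1$, $L^2$, and trace topologies, and the Neumann boundary datum $\partial_{x_d} g - bg + ia\partial_{x_0} g$ converges in $L^2(\{x_d=0\})$ because $a\in L^\infty$ near $0$ and $\partial_{x_0} g_n \to \partial_{x_0} g$ in $H^1$-trace sense.

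The main obstacle is controlling the perturbation cleanly: one needs the $a$-term to appear against a tangential derivative already estimated on the left-hand side of the $a=0$ inequality, which is why it is essential to use the full strength of Proposition~\ref{carlemanneumann} (including the boundary norm $|h(\partial_{x'}g)e^{\varphi/h}|$) and why the smallness $\|a\|_{L^\infty(B_\kappa)}\to 0$ is crucial; without it, no absorption is possible, and any residual boundary term involving $\partial_{x_0} g$ would prevent closing the estimate.
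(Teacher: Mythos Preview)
Your proposal is correct and follows essentially the same route as the paper: apply Proposition~\ref{carlemanneumann} with $a\equiv 0$ (where its extra hypotheses become trivial), move the term $i a\,\partial_{x_0}g$ into the Neumann datum, use the smallness of $\sup_{B_\kappa}|a|$ to absorb the resulting boundary contribution against the tangential trace $|h(\partial_{x'}g)e^{\varphi/h}|$ on the left, and close by density. The paper's proof is in fact slightly terser (it invokes ``a standard approximation argument'' without detail), so your explicit remarks on the regularization step are a welcome addition.
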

\begin{proof}
 We apply Proposition \ref{carlemanneumann} with $\overline{a}:=0$ and $\overline{g_1}:=g_1-i a \partial_{x_0} g$ to get, after a standard approximation argument,
\begin{eqnarray*}
\| g e^{\varphi / h}\|_{L^2(x_n>0)}  +\| h(\partial_{x} g) e^{\varphi / h}\|_{L^2(x_n>0)}+| h(\partial_{x'} g) e^{\varphi / h}|&\le& Ch^{-1/2}\|h^2 P(g) e^{ \varphi / h}\|_{L^2(x_d>0)}\\
&+& C \left(|h g_1 e^{ \varphi / h}| +|h a \partial_{x_0} g e^{ \varphi / h}|\right)\\
&\le& C h^{-1/2}\|h^2 P(g) e^{ \varphi / h}\|_{L^2(x_d>0)}\\
&+& C|h g_1 e^{ \varphi / h}| + h \ C  \sup_{B_\kappa} |a| \ |\partial_{x_0} g e^{ \varphi / h}|.
\end{eqnarray*}
Choosing now $\kappa$ sufficiently small so that $\displaystyle{C \sup_{B_\kappa} |a|} \leq 1/2$, we get the desired estimate.
\end{proof}

We finally deduce from Theorem \ref{Carleman-mixte} the analog of the Carleman estimates of Propositions \ref{carlemansansconditionbord}
, \ref{carlemandirichlet} and Cororally \ref{carlemanneumann2}.
\begin{corollary}\label{carlemanmelee}
Assume that \eqref{nonzero} and \eqref{hypohormander} hold. If $\rho>1/2$, $a\in \Con^{\rho} (\{x_n=0, \ x_1<0\})$ such that $a(x) \tendvers{x_1}{0} 0$,
$$  \left( \frac{\partial \varphi}{\partial x_d} > 0 \text{ on } \{x_d=0\} \cap B_\kappa \right)\text{ and }|\partial_{x'} \VP(0)| \le \eps \partial_{x_d} \VP(0)$$ 
then, for any $\eps$ sufficiently small, there exists \ $\kappa, C, h_0 > 0$, such that, for any $h \in (0,
  h_0)$, $g_1\in L^{2}(x_1<0)$ and any $g \in H^1 (\R^n)$ supported in $B_\kappa$ which satisfies
  \[ P(g)\in L^2(\R^n) \text{ and }\left\{ \begin{array}{l}
       g = 0\\
       \partial_{x_d} g +i a \partial_{x_0} g- b g = g_1
     \end{array} \begin{array}{l}
       \text{ on }  \{x_d=0, x_1>0\} ,\\
       \text{ on }  \{x_d=0, x_1<0\},
     \end{array} \right. \]
  the following inequality holds:
$$
\| g e^{\varphi / h}\|_{L^2(x_n>0)}  +\| h(\partial_{x} g) e^{\varphi / h}\|_{L^2(x_n>0)}\le C\left(h^{-1/2}\|h^2 P(g) e^{ \varphi / h}\|_{L^2(x_d>0)}+  |h g_1 e^{ \varphi / h}|_{L^{2}(x_1<0)}\right).
$$
\end{corollary}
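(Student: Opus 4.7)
The plan is to deduce Corollary \ref{carlemanmelee} from Theorem \ref{Carleman-mixte} by a perturbation argument, in the same spirit as the derivation of Corollary \ref{carlemanneumann2} from Proposition \ref{carlemanneumann}. Concretely, I apply Theorem \ref{Carleman-mixte} to $g$ with Dirichlet data $g_0 := 0$ on $\{x_d=0,\,x_1>0\}$ and modified Neumann-type data
\[
\tilde g_1 := \partial_{x_d} g = g_1 - i a\,\partial_{x_0} g + b g \quad \text{on }\{x_d=0,\,x_1<0\},
\]
moving the low-regularity term $i a \partial_{x_0} g$ and the smooth correction $bg$ from the boundary condition into the right-hand side. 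The task then reduces to controlling $|h\tilde g_1\, e^{\varphi/h}|_{H^{-1/2}_{sc}(x_1<0)}$ by $|h g_1 e^{\varphi/h}|_{L^2(x_1<0)}$ plus terms absorbable into the left-hand side of Theorem \ref{Carleman-mixte}.

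The contribution of $g_1$ to $\tilde g_1$ is dominated by $|h g_1 e^{\varphi/h}|_{L^2(x_1<0)}$ via the continuous embedding $L^2 \hookrightarrow H^{-1/2}_{sc}$. The $bg$ contribution, with $b$ smooth, is bounded by $Ch|ge^{\varphi/h}|_{L^2(x_d=0)} \le Ch|ge^{\varphi/h}|_{1/2}$, which is absorbed for $h_0$ small. The critical term is $|h a\,\partial_{x_0} g\, e^{\varphi/h}|_{H^{-1/2}_{sc}(x_1<0)}$. Using the identity $h\partial_{x_0} g\cdot e^{\varphi/h} = iD_{x_0}(ge^{\varphi/h}) - (\partial_{x_0}\varphi)\,ge^{\varphi/h}$, where $D_{x_0}=(h/i)\partial_{x_0}$ is a tangential semi-classical operator of order $1$, together with the $C^\rho$-multiplier estimate on $H^{-1/2}_{sc}$ (which is valid for $\rho>1/2$, by duality from the standard $C^\rho$-multiplier property of $H^{1/2}$), I obtain
\[
|h a\,\partial_{x_0} g\, e^{\varphi/h}|_{H^{-1/2}_{sc}(x_1<0)} \le C\,M_a\,|g e^{\varphi/h}|_{1/2},
\]
where $M_a$ denotes the multiplier norm on $H^{-1/2}_{sc}$ of the restriction $a|_{B_\kappa\cap\{x_d=0\}}$.

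The main obstacle is to arrange that $C M_a$ can be made arbitrarily small, so as to be absorbed into the term $|g e^{\varphi/h}|_{1/2}$ from the LHS of Theorem \ref{Carleman-mixte}. This is where the conjunction of $a(x)\to 0$ as $x\to\Gamma$ and the Hölder regularity $a\in C^\rho$ with $\rho>1/2$ becomes crucial: for $x\in B_\kappa\cap\{x_1\le 0\}$ one has
\[
|a(x)| \le [a]_{C^\rho}\,|x_1|^\rho \le [a]_{C^\rho}\,\kappa^\rho.
\]
A paraproduct decomposition of $au$ (or, equivalently, an interpolation between the sharp $L^\infty$- and $C^\rho$-multiplier estimates on $H^{-1/2}$) yields a bound of the form
\[
M_a \le C\bigl(\|a\|_{L^\infty(B_\kappa)} + \kappa^{\rho-1/2}\,[a]_{C^\rho}\bigr),
\]
which tends to $0$ as $\kappa\to 0$ precisely because $\rho>1/2$. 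Choosing $\kappa$ small enough that $CM_a\le 1/2$, and then $h_0$ small enough to absorb the $Ch$-remainder, completes the absorption step and yields the stated Carleman estimate.
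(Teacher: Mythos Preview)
Your strategy is exactly that of the paper: apply Theorem~\ref{Carleman-mixte} with $g_0=0$ and $\tilde g_1=g_1-i a\,\partial_{x_0}g+bg$, then absorb the extra boundary terms. The $bg$ and $g_1$ parts are handled identically. Two points in your treatment of the $a\,\partial_{x_0}g$ term need tightening. First, $a$ is only defined on $\{x_1<0\}$, so to speak of its multiplier norm on $H^{-1/2}_{sc}$ you must first extend it; the paper sets $\overline a=\mathbf 1_{\{x_1<0\}}a$ and uses $a\to 0$ on $\Gamma$ together with $a\in\Con^\rho$ to check that $\overline a\in\Con^\rho(\R^{n-1})$, which is what allows one to pass from the half-space norm $|\,\cdot\,|_{H^{-1/2}_{sc}(x_1<0)}$ to the full $|\,\cdot\,|_{-1/2}$. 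Second, your claimed bound $M_a\le C(\|a\|_{L^\infty(B_\kappa)}+\kappa^{\rho-1/2}[a]_{\Con^\rho})$ is not justified as written: there is no ``$L^\infty$-multiplier estimate on $H^{-1/2}$'' to interpolate from (multiplication by merely bounded functions is \emph{not} bounded on $H^{-1/2}$), and the exponent $\rho-1/2$ is the forbidden endpoint. The paper instead fixes $\rho'\in(1/2,\rho)$, uses the $\Con^{\rho'}$-multiplier bound on $H^{-1/2}$ (Triebel), and invokes Lemma~\ref{holderpetit} to obtain $\|\overline a\,\chi(\cdot/\kappa)\|_{\Con^{\rho'}}\le C\kappa^{\rho-\rho'}\|a\|_{\Con^\rho}\to 0$. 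With these two fixes your argument coincides with the paper's.
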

\begin{proof} 
One applies Theorem \ref{Carleman-mixte} (with $n=d+1$) with $\overline{g_0}:=0$, $\overline{g_1}:=g_1-i a\partial_{x_0} g+b g$  
and $g$ such that
 \[ \left\{ \begin{array}{l}
       g = \overline{g_0}\\
       \partial_{x_d} g = \overline{g_1}
     \end{array} \begin{array}{l}
       \text{ on } \{x_d=0, x_1>0\},\\
       \text{ on } \{x_d=0, x_1<0\}.
     \end{array} \right. \]
We get that $\| g e^{\varphi / h}\|_{H^1(x_d>0)}  + | g e^{\varphi / h}|_{1/2} +|h (\partial_{x_d}g) e^{\varphi / h}|_{-1/2}$ is bounded by

\begin{eqnarray*}
 C \left(h^{-1/2}\|h^2 P(g) e^{ \varphi / h}\|_{L^2(x_d>0)}+ |h g_1 e^{ \varphi / h}|_{H^{-1/2}_{sc}(x_1<0)}\right)\\
+C\left(|h a (\partial_{x_0} g) e^{ \varphi / h}|_{H^{-1/2}_{sc}(x_1<0)}+|h b g e^{ \varphi / h}|_{H^{-1/2}_{sc}(x_1<0)}\right).
\end{eqnarray*}
Moreover, we have
\begin{equation}\label{perturbation1}|h b g e^{ \varphi / h}|_{H^{-1/2}(x_1<0)}\le|h b g e^{ \varphi / h}|_{L^{2}(x_1<0)}\le h  \sup_{B_\kappa }|b| \ |g e^{ \varphi / h}|_{H^{1}(x_1<0)}.\end{equation}
On the other hand, one has $(\partial_{x_0} g) e^{\VP/h} \in H^{-1/2}(x_1<0)$ so there exists $u\in H^{-1/2}(\R^{n-1})$ such that $u_{|x_1<0}=(\partial_{x_0} g) e^{\VP/h}$.\\
Denoting  $\overline{a}:=1_{\{x_1<0\}} a \in \Con^{\rho}(\R^{n-1})$, one now has, provided $\rho>1/2$,
\begin{equation}\label{continuite-1/2} \overline{a} u \in H^{-1/2}(\R^{n-1}),\end{equation}
using standard continuity result (see e.g. \cite[Corollary p. 143]{Triebel}).\\
Since $\overline{a} u =1_{\{x_1<0\}} a (\partial_{x_0} g) e^{ \varphi / h}=\overline{a}(\partial_{x_0} g) e^{ \varphi / h}$ and  according to the definition of the $H^{-1/2}_{sc}(x_1<0)$ norm, one consequently has 
$$| a (\partial_{x_0} g) e^{ \varphi / h}|_{H^{-1/2}_{sc}(x_1<0)}\le |\overline{a} (\partial_{x_0} g) e^{ \varphi / h}|_{-1/2}.$$
Using \cite[Corollary p. 143]{Triebel} again, we deduce, for $1/2<\rho'<\rho$,
\begin{equation}\label{continuite-1/2bis}| a (\partial_{x_0} g) e^{ \varphi / h}|_{H^{-1/2}_{sc}(x_1<0)}\le |\overline{a} (\partial_{x_0} g) e^{ \varphi / h}|_{-1/2} \le C\|\overline{a} \chi(./\kappa)\|_{\Con^{\rho'}} \ | (\partial_{x_0} g) e^{ \varphi / h}|_{H^{-1/2}_{sc}(x_1<0)},\end{equation}
where $\chi$ is some regular cut-off function supported in $B_2$ such that $\chi=1$ on $B_1$ and, for any $x\in \R^{n-1}$, $\chi(./\kappa)(x)=\chi(x/\kappa)$.\\
One may now apply Lemma \ref{holderpetit} and get that
\begin{equation}\label{perturbation2}| a (\partial_{x_0} g) e^{ \varphi / h}|_{H^{-1/2}_{sc}(x_1<0)}\le C \kappa^{\rho-\rho'} \|a\|_{\Con{^\rho}} | (\partial_{x_0} g) e^{ \varphi / h}|_{H^{-1/2}_{sc}(x_1<0)}.
\end{equation}
Summing up \eqref{perturbation1} and \eqref{perturbation2}, we deduce that, for $h$ and $\kappa$ sufficiently small,
\begin{eqnarray*}
& \| g e^{\varphi / h}\|_{H^1(x_d>0)}  + | g e^{\varphi / h}|_{1/2}  +  |h (\partial_{x'} g) e^{\varphi / h}|_{-1/2}&\\
&\le C \left(h^{-1/2}\|h^2 P(g) e^{ \varphi / h}\|_{L^2(x_d>0)}+  |h g_1 e^{ \varphi / h}|_{H^{-1/2}_{sc}(x_1<0)} \right).&
\end{eqnarray*}
Finally, writing $g=(g e^{\VP/h})e^{-\VP/h}$, we have
$$ \| g e^{\varphi / h}\|_{L^2(x_d>0)} + \|h (\partial_x g) e^{\varphi / h}\|_{L^2(x_d>0)}\le C\| g e^{\varphi / h}\|_{H^1(x_d>0)}$$
and the sought result is proved.
\end{proof}
\subsection{Proof of Proposition \ref{interpolation}}
We follow the method of Lebeau and Robbiano (see \cite[Paragraph 3.B]{LR1}).
We here simply insist on the points that differ in our context. 
\paragraph{Interpolation inequality away from the boundary}
Defining $X_d = (- 3 / 4, 3 / 4) \times \Omega_d$ with $\Omega_d =\{x \in
\Omega ; d (x, \partial \Omega) \geqslant d\}$, we first recall an
interpolation inequality for system \eqref{ondesabstrait} away from the boundary.

\begin{lemma}\label{interpolationDirichlet}
  There exists $C > 0$ and $\tau_0 \in (0, 1)$ such that for
  any $\tau \in [0, \tau_0]$ and for any function $v$ solution of \eqref{ondesabstrait}, the
  following inequality holds
  \[  \| v \|_{H^1 (X_d)} \leqslant C \left( \| \Delta_X v
     \|_{L^2 (X)} + \|v \|_{L^2 (\partial X_N^\delta)} + \| \partial_{\nu} v \|_{L^2 (\partial X_N^\delta)} \right)^{\tau} \| v\|^{1 - \tau}_{H^1 (X)}. \]
\end{lemma}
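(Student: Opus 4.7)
The plan is to follow the Lebeau--Robbiano strategy of \cite[Paragraph 3.B]{LR1}, as indicated by the authors. The idea is to propagate the Cauchy-type data $(v,\partial_\nu v)_{|\partial X_N^\delta}$ into the interior region $X_d$ via a chain of local Carleman estimates, in three main steps.

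\textbf{Step 1 (Local Carleman-to-interpolation mechanism).} Given a weight $\varphi$ satisfying \eqref{nonzero} and \eqref{hypohormander} in a ball $B_R$, I would choose levels $c_0<c_1<c_2<c_{\max}=\max_{B_R}\varphi$ and a cut-off $\chi\in\Cinf_0(B_R)$ with $\chi=1$ on $\{\varphi\ge c_1\}$ and $\supp\nabla\chi\subset\{c_0\le\varphi\le c_1\}$. Applying the relevant local Carleman estimate to $\chi v$ and using $P(\chi v)=\chi Pv+[P,\chi]v$, where the commutator is supported where $\nabla\chi\ne 0$, one obtains schematically
\[
e^{c_2/h}\,\|v\|_{H^1(\{\varphi\ge c_2\})}\le C\,e^{c_{\max}/h}\bigl(\|\Delta_X v\|_{L^2(X)}+\text{local boundary data}\bigr)+C\,e^{c_1/h}\,\|v\|_{H^1(X)}.
\]
Optimizing over $h\in(0,h_0]$ by balancing the two competing exponentials yields a local interpolation inequality near the center of $B_R$ with some exponent $\tau\in(0,1)$.

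\textbf{Step 2 (Two kinds of local estimates).} For a ball centered at an interior point of $X$, apply Proposition~\ref{carlemansansconditionbord} in the scheme of Step 1; only $\|\Delta_X v\|_{L^2(X)}$ appears in the ``local data''. For a ball centered at a point $x_0\in\partial X_N^\delta$, note first that $x_0$ lies away from $\Gamma$, since $a\ge\delta>0$ on $\partial X_N^\delta$ while $a(x)\tendvers{x}{\Gamma}0$. In a neighborhood of $x_0$ one flattens $\partial X_N^\delta$ and uses a Carleman estimate for the Cauchy problem, obtained from Proposition~\ref{carlemansansconditionbord} by integration by parts in the standard way, keeping both traces $v_{|\partial X_N^\delta}$ and $\partial_\nu v_{|\partial X_N^\delta}$ as independent source terms on the right-hand side. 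The corresponding local interpolation thus contains $\|v\|_{L^2(\partial X_N^\delta)}$ and $\|\partial_\nu v\|_{L^2(\partial X_N^\delta)}$, and no boundary condition on $v$ is assumed.

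\textbf{Step 3 (Chain propagation).} Since $X$ is connected, $\partial X_N^\delta$ is non-empty (as $a\not\equiv 0$ and by continuity), and $X_d$ is compact, every point of $X_d$ can be joined to some point of $\partial X_N^\delta$ by a continuous curve lying in $X\setminus(\partial X_D\cup\Gamma)$. Cover such a curve by finitely many balls, the first one centered at a point of $\partial X_N^\delta$ (where Step 2 injects the Cauchy data terms) and the successors strictly interior (using the interior variant of Step 2). Iterating the local interpolations of Step 1 along the chain, and using the compactness of $X_d$ to extract finitely many such chains, delivers the claimed global estimate on $X_d$ with a uniform exponent $\tau\in(0,1)$.

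The main obstacle is the construction, at each link of the chain, of a weight $\varphi$ satisfying the pseudo-convexity hypothesis \eqref{hypohormander} while its sublevel sets carry the information forward. I would handle this by the classical substitution $\varphi=e^{\beta\psi}$ with $\beta>0$ large, $\psi$ having its critical points placed to steer the propagation (cf.\ \cite[Proposition 28.3.3]{Hoermander:V3} or \cite[Proof of Lemma 3, page 352]{LR1}). A secondary technicality is to ensure uniformly in $x^\star\in X_d$ that the chain avoids $\Gamma$: this is exactly where $\delta>0$ and the assumption $a(x)\tendvers{x}{\Gamma}0$ are used, providing a positive buffer between the controlled zone $\partial X_N^\delta$ and the singular locus $\Gamma$.
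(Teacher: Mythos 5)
Your proposal is correct and is essentially the same approach as the paper: the authors' own proof simply invokes the interpolation inequality of \cite[Paragraph 3.B, equation (45)]{LR1} (in the mildly generalized form with a cut-off $\theta$ carrying the traces, then choosing $\theta$ supported in $\partial X_N^\delta$), and the proof of that inequality is exactly the mechanism you reconstruct. Namely, local Carleman estimates (interior, and near $\partial X_N^\delta$ with both traces kept as source terms, i.e.\ Proposition \ref{carlemansansconditionbord}) turned into local interpolation inequalities by optimization in $h$ with weights $e^{\beta\psi}$, then propagated along finite chains of balls from $\partial X_N^\delta$ into $X_d$, staying away from $\Gamma$ thanks to $a\ge\delta$ on $\partial X_N^\delta$.
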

\begin{proof} We first have, for any function $v$ solution of \eqref{ondesabstrait},
\[  \| v \|_{H^1 (X_d)} \leqslant C \left( \| \Delta_X v
     \|_{L^2 (X)} + \| \theta v \|_{L^2 (\partial X)} + \| \theta
     \partial_{\nu} v \|_{L^2 (\partial X)} \right)^{\tau} \| v\|^{1 - \tau}_{H^1 (X)} \]
for $\theta \in \Cinf_0(\partial X)$ any non-trivial function.
\newline \noindent Indeed, if $v=0$ on $\partial X$, this result is contained in \cite[Paragraph 3.B, equation (45)]{LR1}.
The proof of this slightly more general interpolation result is
absolutely identical and we shall not detail it here (see also \cite[Paragraph 3.B, Lemme 3]{LR1} for a local version of this estimate).
\newline \noindent It suffices now to choose $\theta$ supported in $\partial X_N^\delta$ to obtain the Lemma.
\end{proof}
\paragraph{Interpolation method near the boundary}

We now use the Carleman estimates described in Section 3.1 to prove estimates
near the boundary. 

\par First of all, we begin by the definition of the phase function inspired by
\cite{LR1}. We put $\VP=f(\psi)$ where $f(t)=e^{\beta t}$  and, writing $x=(x_0,y)$,
\[ \forall x\in X, \ \psi (x) = \psi_0 (x_0)+\psi_1 (y)\]
where, for some $d>0$, $\psi_1$ is such that $\partial_{\nu} \psi_1 (y) > 0$ if $d (y, \partial
\Omega) < 3 d$ (for $\partial_{\nu}$ a vector field defined in some
neighbourhood of $\partial \Omega$ which extends the normal derivative on
$\partial \Omega$) and
\[ \psi_1 (y) = \left\{ \begin{array}{l}
     d(y,\partial \Omega)\\
     3 d
   \end{array} \begin{array}{l}
     \text{ if } d (y, \partial \Omega) \leqslant 2 d,\\
     \text{ if } d (y, \partial \Omega) \geqslant 3 d,
   \end{array} \right. \]
and, for some $\eps>0$, $\psi_0$ is an even function such that $\psi'_0 (x_0) < 0$ if $x_0 > 1/ 2$ with
\[ \psi_0(x_0) = \left\{ \begin{array}{l}
     0\\
     \eps (1  -2 x_0)
   \end{array} \begin{array}{l}
     \text{ if } x_0 \in [0, 1 / 2],\\
     \text{ if } x_0 \in [3 / 4, 1).
   \end{array} \right. \]
\par We will now show that we can apply our Carleman estimates to our function
$\varphi$.
\newline \noindent First, it is classical that the function $\varphi=f(\psi)$ satisfies H\"{o}rmander
hypoellipticity condition \eqref{hypohormander} for some $\beta > 0$ large enough. We refer the reader to Lemme 3 in \cite[Paragraph 3.B]{LR1} for a proof.
\newline \noindent On the other hand, since $\partial_{\nu}\VP=f'(\psi)\partial_{\nu}\psi $ and $\partial_{x'}\VP=f'(\psi) \partial_{x'}\psi $, it is clear that
$$|\partial_{x'}\VP|\le C \eps |\partial_{\nu}\VP| \text{ on } \partial X$$
and the hypotheses of Theorem \ref{Carleman-mixte} are fullfilled for $\eps$ sufficiently small.
\par A finite partition of unity on $\partial \Omega$ combined
with Propositions \ref{carlemansansconditionbord}, \ref{carlemandirichlet} and Corollaries \ref{carlemanneumann2}, \ref{carlemanmelee} may now show that
\begin{eqnarray}\label{carlemanglobal}
& \| e^{\varphi / h} w
  \|^2_{L^2 (X)} + \| e^{\varphi / h} h \partial_x w \|^2_{L^2 (X)}&\\
& \le C \left(h^3 \| e^{\varphi / h} P w \|^2_{L^2 (X)} + h^2 \| e^{\varphi / h}
 (\partial_{\nu} - i a \partial_{x_0}+b) w \|^2_{L^2 (\partial X_N)} \right.&\nonumber \\
& \left. + \|e^{ \varphi / h}w\|_{L^2(\partial X_N^\delta)} + \|he^{ \varphi / h}\partial_\nu w\|_{L^2(\partial X_N^\delta)} \right)\nonumber &
\end{eqnarray}
for any $w \in H^1(X)$ supported in some small neighbourhood $W$ of $\partial X$ which also satisfies $P w\in L^2(X)$, $w = 0$ on $\partial X_D$ and $\partial_\nu w\in L^2(\partial X_N)$. 
\newline \noindent Indeed, one first chooses the
partition of unity $(\theta_i)$ on some \nhd of $\partial \Omega$ such that any element of this partition $\theta$ lies in one of the following cases:
\begin{enumerate}
\item $\supp (\theta) \cap \partial \Omega \subset \partial \Omega_D$,
\item $ \supp (\theta) \cap \partial \Omega \subset \{y \in \partial \Omega_N
   ; a (y) < 2 \delta\}$,
\item $\supp (\theta) \cap \partial \Omega \subset \{y \in \partial \Omega_N
   ; a (y) > \delta\}$,
\item $\supp (\theta) \cap \partial \Omega \subset \partial \Omega_D \cup
   \{y \in \partial \Omega_N ; a (y) < 2 \delta\}$ and $\theta$ supported in a \nhd of $\Gamma$.
\end{enumerate}
Next, for $\delta$ and $\supp(\theta)$ chosen sufficiently small, one defines $g=\theta \tilde{v}$. 
Working in local coordinates such that \eqref{bord} and \eqref{borddubord} hold, we may apply to function $g$
\begin{itemize}\item Proposition \ref{carlemandirichlet} in case 1, 
 \item Corollary \ref{carlemanneumann2} in case 2, 
\item Proposition \ref{carlemansansconditionbord} in case 3, 
\item Corollary \ref{carlemanmelee} in case 4, \end{itemize}
and, summing up these inequalities, we directly get the estimate \eqref{carlemanglobal}. Note in particular that the estimates of Propositions 
\ref{carlemandirichlet}, \ref{carlemanneumann} and \ref{carlemansansconditionbord} can be applied to $w$, using a standard approximation argument.
\par To get now our interpolation inequality, we
define, for $r_1 < r'_1 < r_2 < r_2' < r_3 < r'_3$, the sets (see Figure 2) 
$$
  V =\{x \in X ; r_1 \leq \psi (x) \leq r'_3 \}$$
and, for $j=1,2,3$,
$$  V_j =\{x \in X ; r_j \leq \psi(x) \leq r_j' \} . $$
As in \cite{LR1}, we choose $r_1 = - 2 d$, $r'_1 =  - d$, $r_2 = 0$, $r'_2
= d$, $r_3 = 3 / 2  d$, $r'_3 = 2  d$ so that,
using the definition of $\psi$,
\begin{equation}\label{inclusions}
  (- 1 / 2, 1 / 2) \times (\Omega \backslash \Omega_d) \subset V_2,\qquad V_3
  \subset (- 1, 1) \times \Omega_d . 
\end{equation}
Moreover, using that $\inf \psi_0=-\eps$ and $\sup \psi_1=3d$, one has $V \varsubsetneq X$ for $d$ sufficiently small so that $ - 5 d > - \eps$.

\begin{figure}[h!]
\centerline{\begin{picture}(0,0)%
\includegraphics{interp-Zar.pstex}%
\end{picture}%
\setlength{\unitlength}{3947sp}%
\begingroup\makeatletter\ifx\SetFigFontNFSS\undefined%
\gdef\SetFigFontNFSS#1#2#3#4#5{%
  \reset@font\fontsize{#1}{#2pt}%
  \fontfamily{#3}\fontseries{#4}\fontshape{#5}%
  \selectfont}%
\fi\endgroup%
\begin{picture}(2505,3260)(1711,-2475)
\put(2701,-1716){\makebox(0,0)[lb]{\smash{{\color[rgb]{0,0,0}$V_3$}
}}}
\put(2742,-2050){\makebox(0,0)[lb]{\smash{{\color[rgb]{0,0,0}$V_2$}
}}}
\put(1871,638){\makebox(0,0)[lb]{\smash{{\color[rgb]{0,0,0}$x_0$}
}}}
\put(3000,326){\makebox(0,0)[lb]{\smash{{\color[rgb]{0,0,0}$r_3'$}
}}}
\put(2845,296){\makebox(0,0)[lb]{\smash{{\color[rgb]{0,0,0}$r_3$}
}}}
\put(3601,-136){\makebox(0,0)[lb]{\smash{{\color[rgb]{0,0,0}$W$}
}}}
\put(4201,-1261){\makebox(0,0)[lb]{\smash{{\color[rgb]{0,0,0}$x_n$}
}}}
\put(1726,-286){\makebox(0,0)[lb]{\smash{{\color[rgb]{0,0,0}$V_1$}
}}}
\put(2476,239){\makebox(0,0)[lb]{\smash{{\color[rgb]{0,0,0}$r_2$}
}}}
\put(2176,239){\makebox(0,0)[lb]{\smash{{\color[rgb]{0,0,0}$r_1$}
}}}
\put(2626,314){\makebox(0,0)[lb]{\smash{{\color[rgb]{0,0,0}$r_2'$}
}}}
\put(2326,314){\makebox(0,0)[lb]{\smash{{\color[rgb]{0,0,0}$r_1'$}
}}}
\end{picture}%
}
\caption{Interpolation sets $V_1$, $V_2$, $V_3$ and level sets associated to $(r_i)_{1\le i\le 3}$, $(r'_i)_{1\le i\le 3}$ .}
\end{figure}

\par  We now apply the interpolation method (see Lemme 3 in \cite[Paragraph 3.B]{LR1}).
We choose $\chi$ a $\Cinf$ function supported in $V$ and $W$ such that
$\chi = 1$ for $\psi (x) \in [r'_1, r_3]$ and we apply the Carleman estimate \eqref{carlemanglobal} to $w = \chi \tilde v$ where $\tilde v$ is the solution of \eqref{probleme e}.

\noindent We write $P w = \chi P \tilde v + [P, \chi] \tilde v$ and we note that $[P, \chi]$ is a differential operator of order $1$ supported in $V_1 \cup V_3$.
Hence,
$$\|e^{\VP/h} P w\|_{L^2( X)}\le \|e^{\VP/h} v_0\|_{L^2(V)}+C\|e^{\VP/h} \tilde v\|_{H^1(V_1)}+C\|e^{\VP/h} \tilde v\|_{H^1(V_3)}.$$
Analogously, using trace estimates on $V_1$ and $V_3$, we have
$$\|e^{\VP/h} (\partial_\nu -i a\partial_{x_0}+b)w\|_{L^2(\partial X_N)}\le \|e^{\VP/h} v_1\|_{L^2(\partial X_N \cap V)}+C\|e^{\VP/h} \tilde v\|_{H^1(V_1)}+C\|e^{\VP/h} \tilde v\|_{H^1(V_3)}.$$
Summing up, we obtain by $\eqref{carlemanglobal}$
\begin{eqnarray*}
 \| e^{\varphi / h} \tilde v \|_{L^2 (V_2)} + \|
   e^{\varphi / h} \partial_x \tilde v \|_{L^2 (V_2)} &\leqslant& C \left( \|
   e^{\varphi / h} v_0 \|_{L^2 (V)} + \| e^{\varphi / h}
   v_1 \|_{L^2 (\partial X_N\cap V)} \right.\\
&+&   \| e^{\varphi / h} \tilde v \|_{L^2 (\partial
   X^{\delta}_N\cap V)}+\| e^{\varphi / h} \partial_{\nu} \tilde v
   \|_{L^2 (\partial X^{\delta}_N\cap V)} \\
&+& \| e^{\varphi / h} \tilde v
   \|_{L^2 (V_1)} + \| e^{\varphi / h} \partial_x \tilde v \|_{L^2 (V_1)} \\
&+& \left. \| e^{\varphi / h} \tilde v \|_{L^2 (V_3)} + \| e^{\varphi / h} \partial_x \tilde v
   \|_{L^2 (V_3)} \right)
 \end{eqnarray*}
so that, using the definition of $\VP$,
\begin{eqnarray*}
&e^{f(r_2) / h} \| \tilde v \|_{H^1 (V_2)} \leqslant C e^{f(r'_1) / h}\|  \tilde v \|_{H^1 (V_1)}&\\
&+ C e^{f(r'_3) / h}\left( \| v_0 \|_{L^2 (X)} + \| v_1 \|_{L^2 (\partial X_N)}+\| \tilde v \|_{L^2 (\partial X^{\delta}_N)}+\|\partial_{\nu} \tilde v \|_{L^2 (\partial X^{\delta}_N)} + \|  \tilde v \|_{H^1 (V_3)} \right)&\\
 \end{eqnarray*}
and, using \eqref{inclusions} and coming back to the solution $v$ of \eqref{ondesabstrait}, 
\begin{eqnarray*}
&e^{f(r_2) / h} \|v \|_{H^1 (Y/X_d)} \leqslant C e^{f(r'_1) / h}\|v \|_{H^1 (X)}&\\
&+ C e^{f(r'_3) / h}\left( \| v_0 \|_{L^2 (X)} + \| v_1 \|_{L^2 (\partial X_N)}+\|v \|_{L^2 (\partial X^{\delta}_N)}+\|\partial_{\nu} v \|_{L^2 (\partial X^{\delta}_N)} + \|v \|_{H^1 (X_d)} \right).&\\
 \end{eqnarray*}
In the same way as it was done in \cite[Paragraph 3.B]{LR1} and since $f(r'_1)<f(r_2)<f(r'_3)$, one may deduce after an optimization in $h\in(0,h_0)$ that there exists $\tau_0\in(0,1)$ such that
\begin{eqnarray*}& \| v  \|_{H^1 (Y \backslash X_d)}&\\
 &\leqslant C \left(
   \|v_0 \|_{L^2 (X)} + \|v_1 \|_{L^2 (\partial X_N)} +
   \| v \|_{L^2 (\partial X^{\delta}_N)} + \|
   \partial_{\nu} v \|_{L^2 (\partial X^{\delta}_N)} +
   \| v \|_{H^1 (X_d)} \right)^{\tau_0} \| v  \|_{H^1 (X)}^{1-\tau_0}.&
\end{eqnarray*}
If one combines this result with Lemma \ref{interpolationDirichlet} and taking account that
$\partial_{\nu}v=-ia(x)\partial_{x_0}v+v_1$ on $\partial X_N^\delta$, we get the sought result of Proposition \ref{interpolation}.

\subsection{End of the proof}

\subsubsection{ Preliminary settings}

First of all, let us recall that the system \eqref{ondesmelees} possesses a unique solution.
We define the Hilbert space $H = H^1_D (\Omega) \times L^2 (\Omega)$ (where
$H^1_D (\Omega) =\{u \in H^1 (\Omega) ; u = 0 \text{ on } \partial \Omega_D \}$)
equipped with the norm
\[ \left. \| (u_0, u_1) \right\|^2_H = \int_{\Omega} | \partial_{x} u_0 |^2 + |u_1
   |^2 d x. \]
We also define the unbounded operator $\mathcal{A} $ on $H$ by
\[ \mathcal{A}  = \left( \begin{array}{l}
     0\\
     \Delta
   \end{array} \begin{array}{l}
     I\\
     0
   \end{array} \right) \]
with domain
\[ \mathcal{D}(\mathcal{A} ) =\{u = (u_0, u_1) \in H ; \mathcal{A}  u \in H \text{ and }
   \partial_{\nu} u_0 + a u_1 = 0\}. \]
It is clear that system \eqref{ondesmelees} can be rewritten in terms of the abstract problem
\[ \left\{ \begin{array}{l}
     \partial_t U (t) = \mathcal{A}  U (t),\\
     U (0) = \left( \begin{array}{l}
       u_0\\
       u_1
     \end{array} \right),
   \end{array} \right.  \]
where $U = \left( \begin{array}{l}
  u\\
  \partial_t u
\end{array} \right)$.
\newline \noindent Moreover, $\mathcal{A} $ is a monotone operator. Indeed, an integration by parts gives,
for any $u = \left( \begin{array}{l}
  u_0\\
  u_1
\end{array} \right) \in \mathcal{D}(\mathcal{A} )$,
\[ \Re (\mathcal{A}  u, u)_H = \int_{\partial \Omega_N} \overline{\partial_{\nu}
   u_0} u_1 d\sigma = - \int_{\partial \Omega_N} a |u_1 |^2 d\sigma \leqslant 0. \]
On the other hand, $\mathcal{A}  - I$ is an isomorphism from $\mathcal{D(A)}$ to $H$. Indeed, one easily obtains for
$\left( \begin{array}{l}
  f_0\\
  f_1
\end{array} \right) \in H$,
\[ (\mathcal{A}  - I) \left( \begin{array}{l}
     u_0\\
     u_1
   \end{array} \right) = \left( \begin{array}{l}
     f_0\\
     f_1
   \end{array} \right) \Leftrightarrow \left\{ \begin{array}{l}
     - u_0 + \Delta u_0 = f_0 + f_1,\\
     u_1 = u_0 + f,
   \end{array} \right. \]
and the bijectivity is granted by Lax-Milgram noting that the bilinear form
\[ (u, v) \in H_D^1 (\Omega)^2 \mapsto \int_{\Omega} (\partial_{x} u. \partial_{x} \bar{v}  +
   u \bar{v}) d x+ \int_{\partial \Omega_N} a u \bar{v} d\sigma \]
is coercive. Hence, Hille-Yoshida theorem gives that $\mathcal{A} $ generates a strongly
continuous semigroup on $H$ and hence the existence and unicity of solutions to
problem \eqref{ondesmelees}.

\subsubsection{ Proofs of Proposition \ref{spectral} and Theorem \ref{decroissancelog}}

We consequently focus on the equation
\[ (\mathcal{A}  -i \lambda I) \left( \begin{array}{l}
     u_0\\
     u_1
   \end{array} \right) = \left( \begin{array}{l}
     f_0\\
     f_1
   \end{array} \right) . \]
We will write $R (\mu) = (\mathcal{A}  - \mu I)^{- 1}$ when it is defined.

\noindent One has the following system
\begin{equation}\label{laplacienspectral}
  \left\{ \begin{array}{l}
    (\Delta + \lambda^2) u_0 = (i \lambda f_0 + f_1)\\
    (\partial_{\nu} + i \lambda a (x)) u_0 = - a (x) f_0\\
    u_0 = 0
  \end{array} \begin{array}{l}
    \text{ in } \Omega,\\
    \text{ on } \partial \Omega_N,\\
    \text{ on } \partial \Omega_D,
  \end{array} \right.
\end{equation}
 and  we introduce
$v (x_0, x) = e^{  \lambda x_0} u_0(x)$, $$X = (- 1, 1) \times \Omega, \partial X_N = (- 1, 1) \times \partial \Omega_N \text{ and } \partial X_D = (- 1,
1) \times \partial \Omega_D$$ so that $v$ is solution of
\begin{equation*}
\left\{ \begin{array}{l}
     \Delta_X  v = e^{\lambda x_0} (i\lambda f_0+f_1) \\
     (\partial_{\nu}  + i a (x) \partial_{x_0}) v = e^{\lambda x_0} (-a(x) f_0)\\
     v = 0
   \end{array} \begin{array}{l}
     \text{ in } X,\\
     \text{ on } \partial X_N,\\
     \text{ on } \partial X_D.
   \end{array} \right.
\end{equation*}
Applying Proposition \ref{interpolation} to $v_0:= e^{\lambda x_0} (i\lambda f_0+f_1)$ and $v_1:=e^{\lambda x_0} (-a(x) f_0)$, 
one may get the estimate, for  any $\lambda \in \R$,
\begin{equation}\label{continuation unique}
\| u_0 \|_{H^1 (\Omega)} \leqslant C e^{C| \lambda |} \left( \|
   f_0\|_{H^1 (\Omega)} + \| f_1 \|_{L^2 (\Omega)} + \| u_0
   \|_{L^2 (\partial \Omega^{\delta}_N)} \right), 
\end{equation}
where we have noted $\partial \Omega_N^{\delta}=\{x\in \partial \Omega_N; a(x)>\delta\}$.
\newline If $\| u_0 \|_{L^2 (\partial \Omega^{\delta}_N)} \leqslant \|
f_0 \|_{H^1 (\Omega)} + \| f_1 \|_{L^2 (\Omega)}$, one
consequently has
\begin{equation}\label{but}
  \| u_0 \|_{H^1 (\Omega)} \leqslant C e^{C| \lambda |} \left( \|
  f_0 \|_{H^1 (\Omega)} + \| f_1 \|_{L^2 (\Omega)} \right).
\end{equation}
Else, we have the following estimate
\begin{equation}\label{estimationfausse2}
  \| u_0 \|_{H^1 (\Omega)} \leqslant C e^{C| \lambda |} \| u_0 \|_{L^2  (\partial \Omega^{\delta}_N)} .
\end{equation}
Using that
\[ \int_{\Omega} \overline{u_0} (- \Delta - \lambda^2) u_0 d x = - \lambda^2 \| u_0
   \|_{L^2 (\Omega)} + \int_{\Omega} | \partial_{x} u_0 |^2 d x - \int_{\partial
   \Omega_N} \overline{u_0} \partial_{\nu} u_0 d \sigma \]
and, since $(\Delta + \lambda^2) u_0 = i \lambda f_0 + f_1$ and $\partial_{\nu} u_0 +
i a \lambda u_0 = f_0$, we get, taking the imaginary part of this identity,
\[ | \lambda | \int_{\partial \Omega_N} a|u_0 |^2 d\sigma\leqslant (| \lambda |
   \| f_0 \|_{H^1 (\Omega)} + \| f_1 \|_{L^2 (\Omega)}) \| u_0
   \|_{L^2 (\Omega)} . \]
Consequently \eqref{estimationfausse2} and Young inequality give us
\[ \| u_0 \|_{H^1 (\Omega)} \leqslant C e^{C|\lambda |} ( \| f_0 \|_{H^1 (\Omega)} + \| f_1
   \|_{L^2 (\Omega)} ) \]
and one also gets the estimate \eqref{but}.

\noindent Since $u_1 = f_0 + i \lambda u_0$, we get in both cases the aditional estimate
\begin{equation*}
   \| u_1 \|_{L^2 (\Omega)} \leqslant \| f_0 \|_{H^1
  (\Omega)} + | \lambda | \| u_0 \|_{H^1 (\Omega)} \leqslant C e^{C| \lambda |}
  \left( \| f_0 \|_{H^1 (\Omega)} + \| f_1 \|_{L^2 (\Omega)}
  \right).
\end{equation*}
We consequently have proved that, for any $\lambda$
sufficiently large,
\[ \| u_0 \|_{H^1 (\Omega)} + \| u_1
   \|_{L^2 (\Omega)} \leqslant C e^{C| \lambda |} \left( \|
   f_0 \|_{H^1 (\Omega)} + \| f_1 \|_{L^2 (\Omega)} \right) \]
and that $\mathcal{A}  - i\lambda I$ is one-to-one. 
\newline \noindent Since $\mathcal{D}(\mathcal{A} ) \hookrightarrow H$ is compact, the Fredholm alternative (see e.g. \cite[Th\'eor\`eme VI.6]{B2} ) gives us that $\mathcal{A} $ is onto (because $(\mathcal{A}  -i \lambda
I) u = f \Leftrightarrow (I + (i \lambda - 1) R (1)) u = R (1) f$ where $R (1)$ is a compact operator) and finally that $\mathcal{A}  - i \lambda I$ is an
isomorphism. Moreover,
\[| \lambda | \geqslant \lambda_1 \Rightarrow \| R (i
   \lambda) \|_{H \rightarrow H} \leqslant C e^{C| \lambda |} . \]
On the other hand, the spectrum of $\mathcal{A}$ is discrete.
Furthermore, for any $\lambda \in \R$, $\mathcal{A}  - i \lambda I$ is one-to-one. 
\newline \noindent Indeed, if 
\[ (\mathcal{A}  -i \lambda I) \left( \begin{array}{l}
     u_0\\
     u_1
   \end{array} \right) = 0  \]
then, using a special case of \eqref{laplacienspectral}, one has
$$ \int_\Omega (\Delta+\lambda^2)u_0 \overline{u_0} d x=0$$
so that, using an integration by parts and the boundary conditions of $u_0$, 
$$ \int_\Omega (-|\partial_x u_0|^2+\lambda^2 |u_0|^2) d x - i \lambda \int_{\partial\Omega_N} a |u_0|^2 d \sigma=0.$$
Taking the imaginary part of this expression, we get $\lambda \times a u_0=0$ in $\partial\Omega_N$ and then $u_0=0$ on $\partial \Omega_N^\delta$. 
\\Since $u_0$ also satisfies system \eqref{laplacienspectral} with $f_0=0$ and $f_1=0$, the unique continuation estimate \eqref{continuation unique} gives us $u_0=0$ and hence $(u_0,u_1)=0$.

Using again that $\mathcal{D}(\mathcal{A} )
\hookrightarrow H$ is compact and the Fredholm alternative, we get that $i\mathbb{R} \subset \rho
(\mathcal{A} )$. 
\newline \noindent Since $\lambda \in \rho (\mathcal{A} ) \mapsto R (\lambda)$ is continuous,
there consequenty exists some constant $C > 0$ so that
\[ | \lambda | \leqslant \lambda_1 \Rightarrow \| R (i
   \lambda) \|_{H \rightarrow H} \leqslant C e^{C| \lambda |},\]
which concludes our proof of Proposition \ref{spectral}. 
\newline \noindent Theorem \ref{decroissancelog} is then an immediate application of \cite[Theorem A]{BD}  (see also Th\'eor\`eme 3 in \cite{Bu}).
\section{Comments and further applications}
\paragraph{Regularity of the function $a$.} We begin by some considerations on the sufficient regularity of the function $a$.
Using paradifferential calculus, is it not hard to get that $a$ can be chosen in the Besov space
$B^{1/2}_{\infty,2}(\partial \Omega_N) \hookrightarrow L^\infty(\partial \Omega_N)$. Indeed, the main point is to get continuity estimate analogous to Equations \eqref{continuite-1/2}, \eqref{continuite-1/2bis}
in the proof of Corollary \ref{carlemanmelee} and this can be done in a standard way by the use of paraproduct decomposition (introduced in \cite{Bony}).\\
Moreover, a regularity result of Shamir  on the solutions of the Zaremba problem allow us to extend our Theorem \ref{decroissancelog}.
More precisely, \cite{Sh} yields that, given $f\in L^2(\Omega)$, $f_0\in L^2(\partial \Omega_D)$, $f_1\in L^2(\partial \Omega_N)$, the solution $u\in H^1(\Omega)$ of the system
\begin{equation*}
  \left\{ \begin{array}{l}
\Delta u = f\\
    u = f_0\\
    \partial_{\nu} u = f_1\\
  \end{array} \begin{array}{l}
    \text{ in } \Omega, \\
    \text{ on } \partial \Omega_D,\\
    \text{ on } \partial \Omega_N,\\
  \end{array} \right.
\end{equation*}
belongs in fact to $H^{3/2-\epsilon}(\Omega)$ for any $\epsilon>0$. 
Some additional calculations may show that the result of Theorem \ref{decroissancelog} result remains valid for $a\in\Con^{\rho}(\partial \Omega_N)$ with $\rho>0$ (and even for $a\in B^{0}_{\infty,2}(\partial \Omega_N)$).

\paragraph{Other geometric cases.}

On the other hand, the Carleman estimate given in Theorem~\ref{Carleman-mixte} is local in a \nhd of  $(-1,1)\times \Gamma$, thus we can use it in other geometric cases, patching this estimate with other Carleman estimates
either to prove a global Carleman estimate or to prove an interpolation inequality in the spirit of Proposition~\ref{interpolation}.

 In particular, Theorem \ref{decroissancelog} remains valid if $\Omega$ is replaced by $(M,g)$ a smooth compact riemannian manifold with boundary and 
$\Gamma$ is a smooth submanifold of $\partial M$ of codimension 1. Note that $\Gamma$ is not necessarily connected (see Figure 3).
\newline \noindent Indeed, it is sufficient to work on each connected component of $M$ and to notice that our proof remains valid on each component.

\begin{figure}[h!]
\centerline{
\begin{picture}(0,0)%
\includegraphics{figure2.pstex}%
\end{picture}%
\setlength{\unitlength}{5594sp}%
\begingroup\makeatletter\ifx\SetFigFont\undefined%
\gdef\SetFigFont#1#2#3#4#5{%
  \reset@font\fontsize{#1}{#2pt}%
  \fontfamily{#3}\fontseries{#4}\fontshape{#5}%
  \selectfont}%
\fi\endgroup%
\begin{picture}(2834,1975)(2132,-2159)
\put(2147,-1368){\makebox(0,0)[lb]{\smash{{\color[rgb]{0,.56,0}$\partial M_D $}}}}
\put(4951,-826){\makebox(0,0)[lb]{\smash{{\color[rgb]{0,.56,0}$\partial M_D $}}}}
\put(2746,-331){\makebox(0,0)[lb]{\smash{{\color[rgb]{1,0,0}$\Gamma $}}}}
\put(3601,-1051){\makebox(0,0)[lb]{\smash{{\color[rgb]{0,0,0}$M $}}}}
\put(3781,-2086){\makebox(0,0)[lb]{\smash{{\color[rgb]{0,0,1}$\partial M_N $}}}}
\end{picture}%
}
\caption{A configuration example where $\Gamma$ is not connected.}
\end{figure}

\paragraph{Control of the heat equation.}

With the Carleman estimate found (Theorem \ref{Carleman-mixte}), we can also prove some null controllability results for the heat equation with the Zaremba boundary condition.
We give the result without detailled proof. 

Let $\omega$ an open subset of $\Omega$ such that $\overline{\omega}\neq \Omega$ and $T>0$. Then, for all $u_0\in L^2(\Omega)$, there exists $f\in L^2((0,T)\times\omega)$ such that the solution $u$ of the following system 

\begin{equation*}
  \left\{ \begin{array}{l}
\d_tu-\Delta u = 1_{\omega} f \\
    u = 0 \\
    \partial_{\nu} u = 0 \\
u_{|t=0}=u_0
 \end{array} \begin{array}{l}
    \text{ in } \Omega \times \mathbb{R}^+,\\
    \text{ on } \partial \Omega_D \times \mathbb{R}^+,\\
    \text{ on } \partial \Omega_N \times \mathbb{R}^+,\\
    \text{ in } \Omega,
  \end{array} \right.
\end{equation*}
satisfies $u(T,.)=0$.

We refer the reader to the survey by Le Rousseau and Lebeau \cite[Sections 5-6]{LL} where a strategy of proof is explained in details.
\par First of all, it is sufficient to have the following interpolation inequality (see \cite[Theorem 5.3]{LL}). 
\newline Let $X=(-1,1)\times\Omega$ and $Y=(-1/2,1/2)\times\Omega$. Then, there exists $C>0$ and $\tau\in(0,1)$ such that, for any $v\in H^2(X)$ such that $v=0$ on $(-1,1)\times\partial\Omega_D$,
$\partial_\nu v=0$ on $(-1,1)\times\partial\Omega_N$ and $v(-1,.)=0$ on $\Omega$,
$$
\|v\|_{H^1(Y)}\le C \|v\|_{H^1(X)}^{1-\tau}
\left(
\|(\partial_{x_0}^2+\Delta) v \|_{L^2(X)}+\|\partial_{x_0}v(0,.)\|_{L^2(\omega)} \right) ^\tau.
$$
This interpolation estimate is analogous but different from the one of Proposition~\ref{interpolation}. Nevertheless, we can prove it
 in the same way: the main novelty is to obtain a local interpolation inequality in a \nhd of $(-1/2,1/2)\times \Gamma$ and this can be done exactly as in Section 3.2.
\par With this interpolation inequality and following \cite[Theorem 5.4]{LL}, we can then obtain an inequality on sums of eigenfunctions. 
\newline \noindent Let $(\phi_j)_{j\in \N^*}$ be a orthonormal basis of eigenfunctions of the Laplacian with the Zaremba boundary condition and $0<\mu_1\le \mu_2\le ...$
the associated eigenvalues. 
Then, there exists $C>0$ such that, for all complex sequences $(\alpha_j)_{j\in\N^*}$ and all $\mu>0$,
$$
\sum_{\mu_j\le \mu}|\alpha_j|^2\le Ce^{C\sqrt{\mu}}\int_\omega
\Big|
\sum_{\mu_j\le \mu}\alpha_j\phi_j
\Big|^2dx.
$$
Following the strategy of  \cite[Section 6]{LL}, one is now able to construct a control function $f$.

%
%
\appendix

\section{Symbolic Calculus}
\label{Appendix A}

We use the metric definition given by H\"{o}rmander and, if $g$ is a metric and $m$ a $g$-continuous function, 
the notation $S(m,g)$ of symbol spaces (see definitions 18.4.1, 18.4.7 and 18.4.2 in \cite{Hoermander:V3}).
\newline \noindent We denote $\est{\eta}^2_\eps=|\eta|^2+\eps^2$ and $g$ 
the metric $$g=dx'^2+\frac{d\xi_1^2}{\este{\xi'}^2}+\frac{d\xi''^2}{\este{\xi''}^2}.$$

\begin{lemma}
\label{lem: bonne metrique bon poids}
$g$ is a metric slowly varying, semi-classical $\sigma$-temperate, uniformly with respect to $\eps$ if $h\le \eps $. 
\newline The weights $\est{\xi'}$, $\este{\xi'}$ and $\este{\xi''}$ are $g$-continuous and semi-classical $\sigma, g$-temperate uniformly with respect to $\eps$ if $h\le \eps $. 
\newline Moreover, we have $$\mathfrak{h}^2(x',\xi'):=\sup_{(y',\eta')}\left(\frac{g_{(x',\xi')}(y',\eta')}{g^\sigma_{(x',\xi')}(y',\eta')}\right)=\este{\xi''}^{-2}.$$
\end{lemma}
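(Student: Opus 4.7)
The plan is to exploit the block-diagonal structure of $g$ with respect to the symplectic splitting $(x_1,\xi_1)\oplus(x'',\xi'')$ and then verify H\"ormander's conditions together with the weight conditions, checking that all constants are $\eps$-independent under the assumption $h\le \eps$.

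I would first handle the slow variation. The inequality $g_{(x',\xi')}(y'-x',\eta'-\xi')\le c$ reads $|x'-y'|^2+(\xi_1-\eta_1)^2\este{\xi'}^{-2}+|\xi''-\eta''|^2\este{\xi''}^{-2}\le c$. Peetre-type estimates of the form $\este{\eta'}\le\este{\xi'}+|\xi'-\eta'|$, together with the elementary bound $\este{\xi''}\le \este{\xi'}$, show that for $c$ small enough one has $\este{\eta'}\sim\este{\xi'}$ and $\este{\eta''}\sim\este{\xi''}$, with constants depending on neither $\eps$ nor $h$. This simultaneously yields the slow variation of $g$ and the $g$-continuity of the weights $\est{\xi'}$, $\este{\xi'}$ and $\este{\xi''}$.

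Next I would compute $g^\sigma$. The block-diagonal structure of $g$ together with Cauchy--Schwarz against $\sigma((y',\eta'),(z',\zeta'))=\eta'\cdot z'-\zeta'\cdot y'$ yields
\[
g^\sigma_{(x',\xi')}(y',\eta')=\este{\xi'}^2\, y_1^2+\este{\xi''}^2\,|y''|^2+\eta_1^2+|\eta''|^2,
\]
with equality attained on an explicit vector. The ratio $g/g^\sigma$ is then a ratio of diagonal positive quadratic forms in the same four variable groups, whose supremum over nonzero vectors equals the largest of the four coefficient ratios, namely $\max(\este{\xi'}^{-2},\este{\xi''}^{-2})$; since $\este{\xi''}\le\este{\xi'}$ this maximum is $\este{\xi''}^{-2}$, giving $\mathfrak{h}^2=\este{\xi''}^{-2}$.

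Finally, the $\sigma$-temperate conditions both for $g$ and for each of the weights reduce to the bounds $|\xi_1-\eta_1|,\,|\xi''-\eta''|\le g^\sigma_{(y',\eta')}(x'-y',\xi'-\eta')^{1/2}$ (immediate from the explicit formula for $g^\sigma$) combined with the Peetre inequalities $\este{\xi'}\le C(1+g^\sigma_{(y',\eta')}(x'-y',\xi'-\eta'))^{1/2}\este{\eta'}$ and its $\este{\xi''}$-analog. Uniformity in $\eps$ is automatic since every constant appearing above comes from ratios of $\este{\cdot}$-weights and from $\eps\le\este{\cdot}$, none of which depend on $\eps$. The semi-classical character itself reduces to the observation $h\mathfrak{h}=h\este{\xi''}^{-1}\le h/\eps\le 1$ under $h\le\eps$, which is precisely the semi-classical threshold needed to run the $S(m,g)$-calculus. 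The only real subtlety—the main (mild) obstacle—is tracking the $\eps$-uniformity of the Peetre comparisons; this is handled by the fact that $\eps$ enters only monotonically through the weights $\este{\cdot}$.
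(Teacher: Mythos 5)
Most of your proposal matches the paper's proof: the slow-variation/continuity step via Peetre-type inequalities and $\este{\xi''}\le\este{\xi'}$, the computation $g^\sigma=\este{\xi'}^2dx_1^2+\este{\xi''}^2dx''^2+d\xi'^2$, and the identification of $\mathfrak h^2$ as the largest coefficient ratio $\este{\xi''}^{-2}$ are all correct and essentially the paper's argument.

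The temperance step, however, contains a genuine gap. The Peetre inequality you invoke, $\este{\xi'}\le C\bigl(1+g^\sigma_{(y',\eta')}(x'-y',\xi'-\eta')\bigr)^{1/2}\este{\eta'}$ with $C$ independent of $\eps$, is false: take $x'=y'$, $\eta'=0$ and $|\xi'|=1$; then the left-hand side is of order $1$ while the right-hand side is of order $\eps$, so the best constant blows up like $\eps^{-1}$. In other words, the metric and weights are \emph{not} uniformly $\sigma$-temperate in the classical sense; uniformity in $\eps$ is exactly what fails there, not something "automatic". The point of the statement (and of the paper's proof) is that the \emph{semi-classical} temperance condition carries the extra factor $h^{-2}$ in front of $g^\sigma_{(x,\xi)}(x-y,\xi-\eta)$, and it is precisely there that the hypothesis $h\le\eps$ is used: one writes $\este{\eta}^2\le 2\este{\xi}^2+2|\xi-\eta|^2\le 2\este{\xi}^2\bigl(1+h^{-2}|\xi-\eta|^2\bigr)$, which is legitimate because $\este{\xi}\ge\eps\ge h$, and then bounds $|\xi'-\eta'|^2$ by $g^\sigma$. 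Your proposal instead spends the hypothesis $h\le\eps$ only on the observation $h\mathfrak h\le 1$; that controls the gain of the calculus but is a separate issue from temperance, so as written the uniform semi-classical $\sigma$-temperance of $g$, $\est{\xi'}$, $\este{\xi'}$, $\este{\xi''}$ is not established. Inserting the $h^{-2}$ factor and using $\este{\cdot}\ge\eps\ge h$ at that point repairs the argument and reproduces the paper's key estimate.
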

\vspace{0.8cm}
\begin{remark}
When  a symbol $a(x,\xi) $ is quantified in the semi-classical sense, the usual symbol is $a_h(x,\xi):=a(x,h\xi) $. 
\newline If $a\in S(m,g)$ then $a_h\in S(m_h,g_h)$ where $m_h(x,\xi):=m(x,h\xi)$ and $(g_h)_{(x,\xi)}(dx,d\xi):=g_{(x,h\xi)}(dx,hd\xi)$. 
\newline \noindent We verify that $$(g_h)^\sigma=h^{-2}(g^\sigma)_h$$
and the calculus is admissible if $g_h$ is temperate. 
\par In this case we say that {\it $g$ is semi-classical temperate}. The condition on $g$ read, there exists $C>0$ and $N>0$ such that for all $x,y,z,\xi,\eta,\zeta$,
$$
 g_{(x,\xi)}(z,\zeta)\le Cg_{(y,\eta)}(z,\zeta)(1+h^{-2}g^\sigma_{(x,\xi)}(x-y,\xi-\eta))^N
$$
Analogously, we say that {\it $m$ is semi-classical $\sigma,g$-temperate} if, there exists $C>0$ and $N>0$ such that, for all $x,y,\xi,\eta$,
$$
 m(x,\xi)\le Cm(y,\eta)(1+h^{-2}g^\sigma_{(x,\xi)}(x-y,\xi-\eta))^N
$$
On the other hand, if $\displaystyle{\mathfrak{h}^2_h(x,\xi):= \sup_{(y,\eta)} \left(\frac{(g_h)_{(x,\xi)}(y,\eta)}{(g_h)^\sigma_{(x,\xi)}(y,\eta)}\right)}$, one has
$$\mathfrak{h}_h(x,\xi)=h \mathfrak{h}(x,h\xi).$$
\end{remark}
\vspace{0.8cm}
\begin{proof}
 We have $g^\sigma=\este{\xi'}^2dx_1^2+\este{\xi''}^2dx''^2+d\xi'^2$.
If 
$$\frac{|\xi_1-\eta_1|^2}{\este{\xi'}^2}+\frac{|\xi''-\eta''|^2}{\este{\xi''}^2}<\delta$$
where $\delta$ is small enough, then $$\este{\xi'}^2\sim\este{\eta'}^2, \ \este{\xi''}^2\sim\este{\eta''}^2 \text{ and } \est{\xi'}^2\sim\est{\eta'}^2$$
uniformly with respect to $\eps$. We deduce that $g$ is slowly varying and that $\este{\xi'}$, $\este{\xi''}$ and $\est{\xi'}$ are $g$ continuous. 
\par To prove the temperance, the key point is the following estimate: there exists $C>0$ and $N>0$ such that  
\begin{equation}\label{temperance}
\forall \xi',\eta'\in\R^{n-1}, \ \este{\eta''}^2\le C\este{\xi''}^2(1+h^{-2}|\xi'-\eta'|)^N \textrm{ and } \este{\eta'}^2\le C\este{\xi'}^2(1+h^{-2}|\xi'-\eta'|)^N.
\end{equation}
Indeed, one has $$|\eta|^2+\eps^2\le 2(|\xi|^2+\eps^2)+2|\xi-\eta|^2\le 2 \este{\xi}^2(1+h^{-2}|\xi-\eta|^2)$$ if $ h\le \eps$. The temperance is then
a straightforward consequence of \eqref{temperance}.
\par Moreover, it is also easy to see that
$$
 \mathfrak{h}^2(x',\xi')=\sup_{(y',\eta')}\left(\frac{|y'|^2+\frac{\eta_1^2}{\este{\xi'}^2}+\frac{|\eta''|^2}{\este{\xi''}^2}}{\este{\xi'}^2y_1^2+\este{\xi''}^2|y''|^2+|\eta'|^2}\right)=\frac{1}{\este{\xi''}^2}.
$$
since $\este{\xi''}\le \este{\xi'}$ and equality is obtained for $y_1=1$, $y''=0$ and $\eta'=0$.

\end{proof}

In the sequel, we will need the following version of the sharp semi-classical G\aa{}rding inequality.

\begin{proposition}\label{Garding}
  Let $a \in S(1,g)$ such that $a\ge 0$.
  Then there exists $C_\eps>0$ and $h_\eps>0$ such that
  \[ \forall f \in L^2(\R^{n-1}), \forall h \in (0, h_\eps) ;
     \Re(\op (a) f| f)_0 \geqslant -C_\eps h | f |^2 , \]
where $(.,.)_0$  is the standard scalar product on $L^2(\R^{n-1})$.
\end{proposition}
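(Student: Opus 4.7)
The plan is to deduce this from the classical Fefferman--Hörmander sharp Gårding inequality, after a semi-classical rescaling which converts the problem to a standard (non-semiclassical) one in the Hörmander calculus.

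First, by Lemma~\ref{lem: bonne metrique bon poids}, under the assumption $h \leq \eps$, the metric $g$ is slowly varying and semi-classically $\sigma$-temperate uniformly in $\eps$, with Planck function $\mathfrak{h}(x', \xi') = \langle \xi'' \rangle_\eps^{-1}$ bounded uniformly by $\eps^{-1}$. I would then perform the standard semi-classical change of scale $\xi' = h \eta'$, replacing $a(x', \xi')$ by the rescaled symbol $\tilde{a}(x', \eta') := a(x', h \eta')$. As indicated in the Remark following Lemma~\ref{lem: bonne metrique bon poids}, $\tilde{a}$ belongs to $S(1, g_h)$ for the rescaled metric $g_h$, the rescaled Planck function satisfies $\mathfrak{h}_{g_h} = h\,\mathfrak{h}(x', h\eta') \leq h/\eps$, and the $S(1, g_h)$ semi-norms of $\tilde{a}$ are controlled by finitely many $S(1, g)$ semi-norms of $a$ uniformly in $\eps$ and $h$.

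Next, I would apply the sharp Gårding inequality of Hörmander (\cite[Theorem 18.6.14]{Hoermander:V3}) to the non-negative symbol $\tilde{a}$ in the Weyl calculus associated with the admissible metric $g_h$. This yields
$$
\Re(\tilde{a}^w(x', D_{x'}) u, u) \geq -C \|u\|^2_{\mathfrak{h}_{g_h}^{1/2}} \geq -C \sup(\mathfrak{h}_{g_h}) \, |u|^2 \geq -\frac{Ch}{\eps} |u|^2,
$$
with $C$ depending only on finitely many semi-norms of $a$ in $S(1, g)$, hence uniform in $\eps$. To transfer this to the left quantization $\op(a)$ used in the paper (which corresponds to $\tilde{a}(x', hD_{x'})$ after the rescaling), one uses the standard relation between left and Weyl quantizations: their difference is a symbol of order $h$ belonging to $S(\mathfrak{h}_{g_h}, g_h)$, so it contributes only an additional $O(h/\eps)$ bounded operator error of the same form as the main term.

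Setting $C_\eps := C/\eps$ (after absorbing the lower-order corrections) and $h_\eps := \eps$ gives the stated conclusion for all $h \in (0, h_\eps)$. The main obstacle is the careful bookkeeping of the $\eps$-dependence: one must check that no hidden factors of $\eps^{-1}$ enter the constant beyond the single factor coming from the uniform bound $\mathfrak{h} \leq \eps^{-1}$, which is precisely what the uniform slow variation and $\sigma$-temperance in Lemma~\ref{lem: bonne metrique bon poids} are designed to ensure.
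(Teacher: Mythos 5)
Your proposal is correct and follows essentially the same route as the paper: a semi-classical rescaling to the metric $g_h$ (using the uniformity in Lemma \ref{lem: bonne metrique bon poids} for $h\le\eps$), the sharp G\aa rding inequality in the Weyl calculus applied to the rescaled non-negative symbol, and then passage from the Weyl to the standard quantization at the cost of an $O(h)$ bounded remainder. The only differences are cosmetic: the paper normalizes by the factor $\eps/h$ so as to place the symbol in $S(1/\mathfrak{h}_h,g_h)$ before invoking G\aa rding, whereas you keep the symbol in $S(1,g_h)$ and track the gain $\mathfrak{h}_{g_h}\le h/\eps$ directly.
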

\begin{proof}
We first note that
$$ \mathfrak{h}_h(x',\xi')=\frac{h}{ \este{h\xi''}}\le \frac{h}{\eps}$$
and, since $a\in S(1,g)$, 
$$ \frac{\eps}{h} a_h\in S\left(\frac{1}{\mathfrak{h}_h},g_h\right).$$
\newline \noindent If $h\le \eps$ then $\mathfrak{h}_h\le 1$ and the standard sharp G\aa{}rding inequality (see \cite[Theorem 18.6.7]{Hoermander:V3}) applied to $\frac{\eps}{h} a_h$ directly
gives
$$\forall f \in L^2(\R^{n-1}), \forall h \in (0, \eps) ;
     (\op_W (a) f| f)_0 \geqslant -C_\eps h | f |^2 ,$$
where $\op_W$ denotes the Weyl quantified operator associated to $a$.
\newline \noindent Moreover, one may classicaly write
$$ \op_W (a)=\op (\tilde{a})$$
for some $\tilde{a}$ which satisfies 
$$\tilde{a}=a+ h b$$
where $ b\in S(1,g)$.
The result is then a straightforward consequence of the $L^2$ continuity of $\op(b)$.
\end{proof}

\begin{lemma}\label{lemme estimation norme} 
 Let $(a_\eps)_{\eps\in(0,1)}$ a family of $S(1,g)$ and $$M_\eps=\sup_{(x',\xi')\in\R^{n-1}\times\R^{n-1}}|a_\eps(x',\xi')|.$$
Then 
there exists $C_\eps>0$ such that
\begin{equation}\label{lemme estimation norme eq 0}
\forall u\in \S(\R^{n-1}), \ |\op(a_\eps)u|\le (M_\eps+C_\eps h^{1/2})|u|
\end{equation}
provided $h$ is sufficiently small.
\end{lemma}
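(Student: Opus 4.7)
The plan is the standard Gårding-type argument that bounds the $L^2$ operator norm by the sup norm of the symbol, up to a lower-order error. Set $b_\eps := M_\eps^2 - |a_\eps|^2$. Since $S(1,g)$ is an algebra under pointwise multiplication, $|a_\eps|^2 \in S(1,g)$ (with seminorms controlled by those of $a_\eps$, hence depending on $\eps$), and a constant belongs to $S(1,g)$ trivially, so $b_\eps \in S(1,g)$ with $\eps$-dependent seminorms. By construction $b_\eps \ge 0$ pointwise.

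The first main step is to invoke the sharp semi-classical Gårding inequality of Proposition~\ref{Garding} applied to $b_\eps$. This gives some $C_\eps > 0$ and $h_\eps > 0$ such that for all $h \in (0, h_\eps)$ and $u \in \S(\R^{n-1})$,
\[
\Re(\op(b_\eps) u | u)_0 \geq -C_\eps h\, |u|^2,
\]
which rearranges to $\Re(\op(|a_\eps|^2) u | u)_0 \leq M_\eps^2 |u|^2 + C_\eps h |u|^2$, since $\op(M_\eps^2) = M_\eps^2 \mathrm{Id}$.

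The second main step is to compare $(\op(a_\eps)^* \op(a_\eps) u | u)_0 = |\op(a_\eps) u|^2$ with $(\op(|a_\eps|^2) u | u)_0$. By the symbolic calculus in the Weyl–Hörmander class $S(1,g)$ (combined with the adjoint formula), one has
\[
\op(a_\eps)^* \op(a_\eps) = \op(|a_\eps|^2) + h\, \op(r_\eps),
\]
where $r_\eps \in S(1,g)$ with seminorms that may depend on $\eps$. The Calderón–Vaillancourt type continuity for $S(1,g)$ then yields $|\op(r_\eps) u| \leq C_\eps |u|$, so that $|(\op(r_\eps) u | u)_0| \leq C_\eps |u|^2$.

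Combining the two steps gives
\[
|\op(a_\eps) u|^2 = (\op(|a_\eps|^2) u | u)_0 + h (\op(r_\eps) u | u)_0 \leq M_\eps^2 |u|^2 + C_\eps h |u|^2,
\]
and taking square roots together with the elementary inequality $\sqrt{M_\eps^2 + C_\eps h} \leq M_\eps + \sqrt{C_\eps}\, h^{1/2}$ yields the announced bound \eqref{lemme estimation norme eq 0} (after renaming the constant). The only subtlety — and what I would expect to be the main point to verify carefully — is that the symbolic calculus remainders and the Gårding inequality are available uniformly in a way compatible with keeping the \emph{leading} constant exactly $M_\eps$; this is exactly what the sharp Gårding inequality (rather than a weak Gårding) buys us, and why the admissibility of the metric $g$ together with the constraint $h \leq \eps$ proved in Lemma~\ref{lem: bonne metrique bon poids} is essential.
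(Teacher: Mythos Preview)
Your proof is correct and follows essentially the same approach as the paper: apply the sharp G\aa rding inequality of Proposition~\ref{Garding} to the nonnegative symbol $M_\eps^2-|a_\eps|^2\in S(1,g)$, use the symbolic calculus to write $\op(a_\eps)^*\op(a_\eps)=\op(|a_\eps|^2)+h\,\op(r_\eps)$ with $r_\eps\in S(1,g)$, and combine. The only difference is cosmetic (you run G\aa rding first and the composition formula second, while the paper does the reverse), and your final square-root step makes explicit what the paper leaves implicit.
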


\begin{proof}The method of proof is classical. We give here a proof in our context.

By the symbolic calculus, we have  
$$ \op(a_\eps)^*\op(a_\eps)=\op(|a_\eps|^2)+\op(c)$$
where $c\in hS(1,g)$  and
\begin{equation}\label{lemme estimation norme, eq 1}
\forall u\in \S(\R^{n-1}), \ |(\op(c)u|u)_0|\le C_\eps h|u|^2.
\end{equation}
By assumption  $M_\eps^2-|a_\eps|^2\ge 0$ and $M_\eps^2-|a_\eps|^2\in S(1,g)$. Proposition \ref{Garding} consequently gives some $C_\eps>0$ such that 
$$
\forall u\in \S(\R^{n-1}), \ M_\eps^2|u|^2-(\op(|a_\eps|^2)u|u)_0+C_\eps h|u|^2\ge 0
$$
thus by \eqref{lemme estimation norme, eq 1} we have also
$$
 M_\eps^2|u|^2-|\op(a_\eps)u|^2+C_\eps h|u|^2\ge 0
$$
which gives \eqref{lemme estimation norme eq 0}.

\end{proof}

\begin{lemma}\label{estimation symbole nul en 0}
 Let $(a_\eps)_{\eps\in(0,1)}$ a family of $S(1,g)$. We assume that there exists $C>0$ such that 
$$\forall \eps\in (0,1),\ \forall (x',\xi'), a_{\eps}(0,\xi')=0  \text{ and } \ |\partial_{x'}a_\eps(x',\xi')|\le \frac{C}{\eps}.$$
Then there exists $C_1>0$ and $h_\eps, K_\eps>0$ such that for all $v_0\in H^{1/2}(\R^{n-1}) $ supported in $B_\kappa:=\{x'\in \R^{n-1},\ |x'|\le \kappa \}$,
\begin{equation}\label{estimation symbole nul en 0, eq : 1}
\forall h\le h_\eps, \ |\op(a_\eps)z_0|\le C_1\frac{\kappa}{\eps}|z_0|+K_\eps h^{1/2}|v_0|_{1/2}
\end{equation}
 with $z_0=\op(\lambda_+^{1/2})v_0$.
\end{lemma}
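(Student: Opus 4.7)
The plan is to split $\op(a_\eps)$ into a piece localized near $\supp v_0\subset B_\kappa$, on which the elementary bound $|a_\eps(x',\xi')|\le C|x'|/\eps$ (obtained from $a_\eps(0,\xi')=0$ and $|\partial_{x'}a_\eps|\le C/\eps$ by integration) can be used to control the operator by $\kappa/\eps$, and a ``far from $\supp v_0$'' piece which is negligible by a disjoint support argument.

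Concretely I would pick $\tilde\chi\in\Cinf_0(\R^{n-1})$ with $\tilde\chi=1$ on $B_{2\kappa}$ and $\supp\tilde\chi\subset B_{3\kappa}$, and use that multiplication by a function of $x'$ alone commutes with the semiclassical quantization to write
\[ \op(a_\eps)z_0 = \op(\tilde\chi\, a_\eps)z_0 + \op((1-\tilde\chi)\,a_\eps)z_0.\]
For the first (main) term, $\tilde\chi a_\eps\in S(1,g)$ satisfies $\sup|\tilde\chi a_\eps|\le 3C\kappa/\eps$, so Lemma~\ref{lemme estimation norme} applied directly gives
\[ |\op(\tilde\chi\, a_\eps)z_0| \le \bigl(3C\kappa/\eps + C_\eps h^{1/2}\bigr)|z_0|,\]
which is the expected leading contribution, with a universal constant in front of $\kappa/\eps$.

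For the remainder, I would use that $\lambda_+^{1/2}$ depends only on $\xi'$, so $\op(a_\eps)\op(\lambda_+^{1/2})=\op(a_\eps\lambda_+^{1/2})$ exactly, and hence
\[ \op((1-\tilde\chi)\,a_\eps)z_0 = \op\bigl((1-\tilde\chi(x'))\,a_\eps(x',\xi')\,\lambda_+^{1/2}(\xi')\bigr)v_0.\]
Taking a further cutoff $\chi_0\in\Cinf_0$ with $\chi_0=1$ on $B_\kappa$ and $\supp\chi_0\subset B_{2\kappa}$, we have $v_0=\chi_0 v_0$, and the operator reads $(1-\tilde\chi(x'))\op(a_\eps\lambda_+^{1/2})\chi_0(y')$, whose $x$ and $y$ supports are disjoint (separated by $\ge\kappa$). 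The standard semiclassical disjoint support result used throughout the paper (see, e.g., \cite[Lemma~2.2]{LRR1}) then implies that this operator lies in $\bigcap_N h^N\Psi^{-N}$, so
\[ |\op((1-\tilde\chi)\,a_\eps)z_0| \le C_{\eps,N}\,h^N\,|v_0|_{1/2}\]
for every $N\in\N$; taking $N=1$ suffices to produce the $K_\eps h^{1/2}|v_0|_{1/2}$ piece.

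Combining the two estimates, and absorbing the $C_\eps h^{1/2}|z_0|$ term into $K_\eps h^{1/2}|v_0|_{1/2}$ via the obvious bound $|z_0|\le C|v_0|_{1/2}$ (which follows from $|\lambda_+^{1/2}(\xi')|\le C\est{\xi'}^{1/2}$), yields the desired inequality. The only point requiring vigilance is that the constant in front of $\kappa/\eps$ stays genuinely independent of $\eps$: this is exactly the reason Lemma~\ref{lemme estimation norme} is stated with the sup-norm of the symbol as the leading constant, so that all the $\eps$- and $\kappa$-dependence (which enters through the seminorms of $\tilde\chi a_\eps$ in $S(1,g)$ and of the kernels above) is pushed into the subordinate $h^{1/2}$ correction, and the $C_1$ of the statement is nothing but the universal constant $C$ from the hypothesis on $\partial_{x'}a_\eps$.
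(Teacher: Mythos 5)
Your argument is correct and is essentially the paper's own proof: the same near/far decomposition of the symbol around the support of $v_0$, the same use of the bound $\sup|a_\eps\tilde\chi|\le C\kappa/\eps$ (from $a_\eps(0,\xi')=0$ and the derivative hypothesis) fed into Lemma \ref{lemme estimation norme}, the same disjoint-support/null-expansion treatment of the far piece (made slightly cleaner by your exact multiplier identity $\op(a_\eps)\op(\lambda_+^{1/2})=\op(a_\eps\lambda_+^{1/2})$, which spares you the paper's commutator remainder $a_1\in hS(1,g)$), and the same final absorption via $|z_0|\le C_\eps|v_0|_{1/2}$. One cosmetic slip: with $\tilde\chi=1$ on $B_{2\kappa}$ and $\supp\chi_0\subset B_{2\kappa}$ the supports of $1-\tilde\chi$ and $\chi_0$ touch at $|x'|=2\kappa$ (they are not ``separated by $\ge\kappa$''), so you should take $\tilde\chi=1$ on a neighbourhood of $\supp\chi_0$ -- exactly the paper's condition $\supp\chi\subset\{\tilde\chi=1\}$ -- before invoking the disjoint-support lemma.
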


\begin{proof}

Let $\chi$ and $\tilde\chi$ in $\Cinf(\R^{n-1})$ such that $\chi=\tilde\chi=1$ on $B_\kappa$, supported in $B_{2\kappa}$ and $\supp \chi\subset \{\tilde\chi=1\}$.
We have $\lambda_+^{1/2}\in S(\este{\xi'}^{1/2}, g)$ hence, by symbolic calculus,

\begin{align}
 \op(a_\eps)z_0&=\op(a_\eps)\tilde\chi z_0+
\op(a_\eps)(1-\tilde\chi)\op(\lambda_+^{1/2})\chi v_0\nonumber\\
&=\op(a_\eps\tilde\chi) z_0+\op(a_1) z_0+\op(a_2) v_0,\label{estimation symbole nul en 0, eq : 2}
\end{align}
 where $a_1\in h S(1,g)$ and $a_2\in h S(\este{\xi'}^{1/2},g)$ (since the asymptotic expansion of $a_2$ is null).

\noindent We have, since $a_1\in h S(1,g)$,

\begin{equation}\label{estimation symbole nul en 0, eq : 3}
|\op(a_1)z_0|\le C_\eps h|z_0|.
\end{equation}
One has the estimate
\begin{equation}\label{estimation symbole nul en 0, eq : 5}
 \sup_{(x',\xi')}|a_\eps(x',\xi')\chi(x')|=
\sup_{(x',\xi')}\left|\chi(x')x'\int_0^1\partial_{x'}a_\eps(tx',\xi')dt\right|\le 2C\frac{\kappa}{\eps}
\end{equation}
and $a_\eps\chi\in S(1,g)$. We can apply  Lemma \ref{lemme estimation norme} and get, by  \eqref{estimation symbole nul en 0, eq : 5},
\begin{equation}\label{estimation symbole nul en 0, eq : 6}
 |\op(a_\eps\chi)z_0|\le (2C \kappa/\eps+C_\eps h^{1/2})|z_0|.
\end{equation}
On the oher hand, by symbolic calculus, we have 
\begin{equation*}
\op(\est{\xi'}^{-1/2})\op(a_2)=\op(a_3)
\end{equation*}
where $a_3\in  h S(\este{\xi'}^{1/2}\est{\xi'}^{-1/2},g)\subset h S(1,g)$. Consequently, 
\begin{equation}\label{estimation symbole nul en 0, eq : 7}
 |\op(a_2)v_0|=|\op(a_3)v_0|_{1/2}\le C_\eps h|v_0|_{1/2}.
\end{equation}
Finally, we have
\begin{equation}\label{estimation symbole nul en 0, eq : 8}
 |z_0|=|\op(\lambda_+^{1/2})v_0|\le C_\eps|v_0|_{1/2}.
\end{equation}
Following \eqref{estimation symbole nul en 0, eq : 2}, \eqref{estimation symbole nul en 0, eq : 3}, \eqref{estimation symbole nul en 0, eq : 6}, \eqref{estimation symbole nul en 0, eq : 7} and \eqref{estimation symbole nul en 0, eq : 8}, 
we get the estimate \eqref{estimation symbole nul en 0, eq : 1}.
\end{proof}

\section{Symbol reduction and roots properties}

We recall that
$$p_{\VP}(x,\xi)=\xi_n^2+2i\partial_{x_n}\VP\xi_n+q_2(x,\xi')+2iq_1(x,\xi')$$
where
$q_2(x,\xi')=-(\partial_{x_n}\VP(x))^2+r(x,\xi')-r(x,\partial_{x'} \VP(x))$, $q_1(x,\xi')=\tilde r(x,\xi',\partial_{x'} \VP(x))$
and $$\mu(x,\xi')=q_2(x,\xi')+\frac{q_1(x,\xi')^2}{(\partial_{x_n}\VP(x))^2}.$$
\begin{lemma}\label{Lemme sur les racines}
 $ $
\begin{itemize}
 \item
\begin{itemize}
 \item If $\mu(x,\xi')<0$, the roots of of $p_{\VP}(x,\xi',\xi_n)$ with respect to $\xi_n$ have negative imaginary parts.
 \item On the other hand, if $\mu(x,\xi')>-(\partial_{x_n}\VP(x))^2$ the two roots of $p(x,\xi',\xi_n)$ with respect to $\xi_n$ have different imaginary parts. 

If we denote by $\rho_1(x,\xi')$ and $\rho_2(x,\xi')$ the roots such that $\Im (\rho_1(x,\xi'))>\Im (\rho_2(x,\xi'))$ then
\begin{equation*}
\exists C>0, \ \de>0\text{ such that } |\xi'|\ge C \Rightarrow \Im (\rho_1(x,\xi'))\ge \de |\xi'|,
\end{equation*}
\begin{equation}\label{racines separee}
 \Im (\rho_1(x,\xi'))>-\partial_{x_n}\VP(x) >\Im (\rho_2(x,\xi')).
\end{equation}
Moreover, if $\alpha>0$  there exists $\delta>0$ such that for all $(x,\xi')\in \R^{n-1} \times \R^{n-1}$ satisfying $\mu(x,\xi')\ge-(1-\alpha)(\partial_{x_n}\VP(x))^2$,
\begin{equation}\label{racines elliptiques}
 |\rho_1(x,\xi')+i\partial_{x_n}\VP(x)|\ge\delta \est{\xi'}  \textrm{ and }   |\Im\rho_2(x,\xi')|\ge\delta\est{\xi'}.
\end{equation}
\end{itemize}
\item  If $r(0,\xi')=|\xi'|^2$, $|\partial_{x'}\VP(0)|\le \eps \partial_{x_n}\VP(0)$ and, for some $\alpha>0$, $\mu(0,\xi')\ge -(1-\alpha) (\partial_{x_n}\VP(0))^2$  
then, there exists $\delta>0$ such that
\begin{equation}\label{racines x=0,3}
|\xi'|\ge \delta \partial_{x_n}\VP(0) 
\end{equation}
and, for sufficiently small $\eps$, the following formula is valid 
\begin{equation}\label{racines x=0,1}
\rho_1(0,\xi')+i\partial_{x_n}\VP(0)=i \partial_{x_n}\VP(0) \rho\left(\frac{\xi'}{\partial_{x_n}\VP(0)}\right)
\end{equation}
where, setting $V:=\frac{\partial_{x'}\VP(0)}{\partial_{x_n}\VP(0)}$, we have noted
$$\rho(\eta'):=\sqrt{|\eta'|^2-|V|^2+2i\eta'.V} \ .$$ 
Moreover, one has the following estimates
\begin{eqnarray}\label{racines x=0,2}
&\forall |\eta'|\ge \eps,& \quad |\eta'|+\eps \ge \Re(\rho(\eta'))\ge  |\eta'| -\eps,\\
&\forall |\eta'|\ge 2\eps,& \quad |\Im(\rho(\eta'))|\le 2\eps. \nonumber
\end{eqnarray}
\end{itemize}
\end{lemma}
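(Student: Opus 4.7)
The plan is to solve $p_\VP(x,\xi',\cdot) = 0$ explicitly as a quadratic in $\xi_n$: the usual formula gives
$$\rho_{1,2}(x,\xi') = -i\d_{x_n}\VP(x) \pm \sqrt{-A - 2iB},$$
where I set $A(x,\xi') := (\d_{x_n}\VP)^2 + q_2$ and $B(x,\xi') := q_1$. Writing the principal branch of the complex square root as $u+iv$ with $u \ge 0$, the relations $u^2-v^2 = -A$ and $2uv = -2B$ give $v^2 = \tfrac{1}{2}(\sqrt{A^2+4B^2}+A)$ and $u^2 + v^2 = \sqrt{A^2+4B^2}$. Since \eqref{nonzero} together with continuity lets us assume $\d_{x_n}\VP > 0$ on $B_\kappa$ (refining $\kappa$ if needed), one has $\Im\rho_{1,2} = -\d_{x_n}\VP \pm v$, so the whole analysis reduces to comparing $v^2$ with $(\d_{x_n}\VP)^2$.

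For the first two items I will prove the algebraic equivalence $v^2 < (\d_{x_n}\VP)^2 \Leftrightarrow \mu < 0$ by a direct squaring argument (valid since $2(\d_{x_n}\VP)^2 - A > 0$ when $\mu<0$) followed by substituting $A = (\d_{x_n}\VP)^2 + q_2$; this immediately yields $\Im\rho_{1,2} < 0$ under $\mu<0$. Similarly, $\mu > -(\d_{x_n}\VP)^2$ rewrites as $A(\d_{x_n}\VP)^2 + B^2 > 0$, and a short case split on the sign of $A$ gives $v^2 > 0$, hence the separation \eqref{racines separee} upon the natural choice $v > 0$. The large-$|\xi'|$ lower bound $\Im\rho_1 \ge \delta|\xi'|$ follows from $r(x,\xi') \ge c|\xi'|^2$ and the boundedness of $\VP$ and its derivatives on $B_{\kappa_0}$: the quantity $A$ grows quadratically in $|\xi'|$, and so does $v^2 \ge A/2$.

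For the ellipticity estimates \eqref{racines elliptiques}, I exploit $|\rho_1 + i\d_{x_n}\VP|^2 = u^2+v^2 = \sqrt{A^2+4B^2}$. Combining the large-$|\xi'|$ asymptotic with the stronger hypothesis $\mu \ge -(1-\alpha)(\d_{x_n}\VP)^2$, equivalently $A(\d_{x_n}\VP)^2 + B^2 \ge \alpha(\d_{x_n}\VP)^4$, another case split (on $A \ge 0$ vs.\ $A < 0$) yields $A^2 + 4B^2 \ge c_\alpha(\d_{x_n}\VP)^4$ uniformly on $B_\kappa$, where $\d_{x_n}\VP$ is bounded below by compactness. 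The lower bound on $|\Im\rho_2| = \d_{x_n}\VP + v$ is then immediate from $v \ge 0$ and its linear growth at infinity.

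For the special case at $x = 0$, the strategy is to rescale with $\eta' := \xi'/\d_{x_n}\VP(0)$ and $V := \d_{x'}\VP(0)/\d_{x_n}\VP(0)$ (so $|V| \le \eps$). The hypothesis $\mu(0,\xi') \ge -(1-\alpha)(\d_{x_n}\VP(0))^2$ becomes $|\eta'|^2 - |V|^2 + (\eta'\cdot V)^2 \ge \alpha$, and Cauchy--Schwarz forces $|\eta'|^2 \ge \alpha/2$ once $\eps$ is small, giving \eqref{racines x=0,3}. The identity \eqref{racines x=0,1} is obtained by squaring both sides (they both equal $-(\d_{x_n}\VP(0))^2[|\eta'|^2 - |V|^2 + 2i\eta'\cdot V]$) and fixing the sign from the requirement $\Im(\rho_1 + i\d_{x_n}\VP(0)) > 0$, matched to $\Re\rho(\eta') > 0$. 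Finally, the bounds \eqref{racines x=0,2} follow from elementary Cauchy--Schwarz estimates on $z := |\eta'|^2 - |V|^2 + 2i\eta'\cdot V$ (namely $|\eta'|^2 - |V|^2 \le |z| \le |\eta'|^2 + |V|^2$ when $|\eta'|\ge|V|$), inserted into the identities $(\Re\rho)^2 = (|z|+\Re z)/2$ and $(\Im\rho)^2 = (|z|-\Re z)/2$. The whole argument is algebraic; the main obstacle will be the bookkeeping of signs and of the branch of the complex square root, and ensuring that the labeling $\rho_1$ versus $\rho_2$ is globally consistent through the several cases.
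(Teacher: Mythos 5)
Your proposal is correct, and it reaches the conclusions by a route that is genuinely more computational than the paper's. For the qualitative statements the paper never writes the roots explicitly: it uses the Vieta relations $\rho_1+\rho_2=-2i\partial_{x_n}\VP$ and $\rho_1\rho_2=q_2+2iq_1$, parametrizes $\rho_1=a+ib$, and argues by contradiction ($b\ge 0\Rightarrow\mu\ge 0$, and equal imaginary parts $\Rightarrow\mu\le-(\partial_{x_n}\VP)^2$); then for \eqref{racines elliptiques} it only invokes the asymptotics $\rho_{1,2}=\pm i\sqrt{r}+O(1)$ for $|\xi'|$ large together with a soft continuity--compactness argument on $|\xi'|\le C$, using that $\rho_1+i\partial_{x_n}\VP$ and $\Im\rho_2$ do not vanish on the closed set $\{\mu\ge-(1-\alpha)(\partial_{x_n}\VP)^2\}$. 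You instead solve the quadratic and reduce everything to the branch-independent identities $v^2=\tfrac12(\sqrt{A^2+4B^2}+A)$ and $u^2+v^2=\sqrt{A^2+4B^2}$ with $A=(\partial_{x_n}\VP)^2+q_2=r(x,\xi')-r(x,\partial_{x'}\VP)$, $B=q_1$; this yields the clean equivalence $\mu<0\Leftrightarrow v^2<(\partial_{x_n}\VP)^2$ (your check that $2(\partial_{x_n}\VP)^2-A>0$ legitimizes the squaring), the separation \eqref{racines separee} from $v>0$, and, via $A(\partial_{x_n}\VP)^2+B^2\ge\alpha(\partial_{x_n}\VP)^4\Rightarrow A^2+4B^2\ge c_\alpha(\partial_{x_n}\VP)^4$, an explicit constant in \eqref{racines elliptiques} where the paper's argument is purely qualitative. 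What you gain is uniformity with explicit constants and no contradiction step; what the paper's route buys is brevity and no bookkeeping of the square-root branch (your flagged labeling issue is harmless precisely because only $v^2$ and $u^2+v^2$ enter). For the part at $x=0$ the two proofs essentially coincide; your rescaled hypothesis $|\eta'|^2-|V|^2+(\eta'\cdot V)^2\ge\alpha$ is the correct rearrangement, your sign-fixing of \eqref{racines x=0,1} matches the paper's (just make explicit that $\Re z=|\eta'|^2-|V|^2>0$ for $\eps$ small, so the principal branch is defined), and your bound $(\Im\rho)^2\le\tfrac12(|z|-\Re z)\le|V|^2$ even slightly sharpens the paper's estimate $|\Im\rho(\eta')|\le 2\eps$. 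Also note that the assumption $\partial_{x_n}\VP>0$ on the relevant ball, which you justify from \eqref{nonzero} and the boundary condition, is likewise implicit in the paper's computations.
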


\begin{proof}
The first point is proved in \cite[Lemme 3]{LR2} using some geometric transformation. For the reader's convenience, we however give another elementary proof.
\\For simplicity, we do not write the variables $(x,\xi')$ and we define the two roots $\rho_1$ and $\rho_2$ such that $\Im \rho_1 \ge \Im \rho_2$. Using equation $p_\VP=0$, we have $$\rho_1+\rho_2=-2i\partial_{x_n}\VP$$ and consequently, 
there exists $a,b \in \R$ such that
$$ \rho_1=a+ib, \quad \rho_2=-a -i (2 \partial_{x_n} \varphi+b).$$ 
It is sufficient to show that $b<0$. Moreover, since $\rho_1 \rho_2= q_2+2i q_1$ and $q_1, q_2$ are real-valued, one gets
$$q_2=-a^2+b(2\partial_{x_n}\varphi+b), \quad q_1=-a(2\partial_{x_n}\varphi+b)$$
and consequently, if $b \ge 0$,
$$q_2+ \frac{q_1^2}{(\partial_{x_n} \varphi)^2}=a^2 \left( \left(2+ \frac{b}{\partial_{x_n} \varphi}\right)^2-1\right)+b(2\partial_{x_n}\varphi+b) \ge 0;$$
a contradiction with our assumption.
\\We now prove that $\rho_1$ and $\rho_2$ have different imaginary parts if $\mu(x,\xi')>-(\partial_{x_n}\VP(x))^2$.  
Assuming that $\Im \rho_1=\Im \rho_2$, by equation $p_\VP=0$ there also exists $a\in\R$ such that $$\rho_1=a-i\partial_{x_n}\VP \text{ and } \rho_2=-a-i\partial_{x_n}\VP.$$
Thus $q_2+2iq_1=\rho_1\rho_2=-(\partial_{x_n}\VP)^2-a^2$ hence $q_1=0$ and $\mu=-(\partial_{x_n}\VP)^2-a^2\le -(\partial_{x_n}\VP)^2$; a contradiction with the assumption.
\newline \noindent Consequently, since $\rho_1+\rho_2=-2i\partial_{x_n}\VP$ and $\Im \rho_1>\Im \rho_2$, we obtain \eqref{racines separee}.

Moreover, we have $$q_2(x,\xi')=r(x,\xi')+O(1) \text{ and } q_1(x,\xi')=O(\est{\xi'}).$$ It is easy to deduce that $$\rho_1(x,\xi')=i\sqrt{r(x,\xi')}+O(1) \text{ and } \rho_2(x,\xi')=-i\sqrt{r(x,\xi')}+O(1).$$
Hence for $|\xi'|>C$ where $C$ is large enough, we have $$\Im(\rho_1)\ge \de|\xi'|, \ |\rho_1(x,\xi')+i\partial_{x_n}\VP(x)|\ge \delta|\xi'| \text{ and } |\Im\rho_2(x,\xi')|\ge \delta|\xi'|$$
where $\delta>0$ is sufficiently small. But we have already proved that $\rho_1(x,\xi')+i\partial_{x_n}\VP(x)\not=0$ and $\Im\rho_2(x,\xi')\not=0$ thus, by compactness argument on $|\xi'|\le C$, we get \eqref{racines elliptiques}.

\par If $r(0,\xi')=|\xi'|^2$ then 
\begin{eqnarray*}p_{\VP}(0,\xi',\xi_n)&=&(\xi_n+i\partial_{x_n}\VP(0))^2+(\xi'+i\partial_{x'}\VP(0)).(\xi'+i\partial_{x'}\VP(0))\\
 &=& (\xi_n+i\partial_{x_n}\VP(0))^2+|\xi'|^2-|\partial_{x'}\VP(0)|^2+2i\xi'.\partial_{x'}\VP(0).
\end{eqnarray*}
On the other hand, if $\mu(0,\xi')\ge -(1-\alpha) (\partial_{x_n}\VP(0))^2$, one has, writing $\eta'=\frac{\xi'}{\partial_{x_n}\VP(0)}$,
$$|\eta'|^2-|V|^2\ge |\eta'.V|^2+\alpha \quad \text{ so that } \quad |\eta'|\ge \sqrt{\alpha}.$$
This proves \eqref{racines x=0,3}.
\newline \noindent Hence, for sufficiently small $\eps$ and since $|\partial_{x'}\VP(0)|\le \eps \partial_{x_n}\VP(0)$, we get 
$$|\xi'|^2-|\partial_{x'}\VP(0)|^2+2i\xi'.\partial_{x'}\VP(0) \not\in \R_-$$
and we consequently deduce
$$\rho_1(0,\xi')+i \partial_{x_n}\VP(0)=i\sqrt{|\xi'|^2-|\partial_{x'}\VP(0)|^2+2i\xi'.\partial_{x'}\VP(0)},$$
which immediately give \eqref{racines x=0,1}.
\newline Using that, for any $z\in \C$ with $\Re(z)\ge 0$,
$$\Re(\sqrt{z})\ge \sqrt{\Re(z)} \text{ and } 2\Re(\sqrt{z})\Im(\sqrt{z})=\Im(z),$$
we finally have, for $|\eta'|\ge \eps$ and since $|V|\le \eps$,
$$\Re(\rho(\eta'))\ge \sqrt{|\eta'|^2-|V|^2}\ge |\eta'|-|V|\ge |\eta'|-\eps$$
and, for $|\eta'|\ge 2 \eps$,
$$|\Im(\rho(\eta'))|\le \frac{|V| |\eta'|}{|\eta'|-\eps} \le \eps \frac{|\eta'|}{ |\eta'|-1/2|\eta'|}\le 2 \eps.$$
On the other hand, one has
$$|\Re(\rho(\eta'))|\le \left( (|\eta'|^2-|V|^2)^2+4(\eta'.V)^2\right)^{1/4}\le (|\eta'|^2+|V|^2)^{1/2} \le |\eta'|+|V|$$
which concludes the proof since $|V|\le \eps$.
\end{proof}

\begin{lemma}\label{Lemme forme normale}
Let $p(x,\xi)$ a $\Cinf$ positive definite quadratic form in $T^*(U)$ where $U$ is a neighborhood of $0\in \R^n$. 
\newline Let $f$ a smooth function defined in $U$ satisfying $f(0)=0$ and $df\neq 0$. 
\newline Then there exists a change of variables such that, in the new variables $(y,\eta)$, we have locally near 0
\begin{description}
\item[] $f(x)>0 \Leftrightarrow y_n>0$ and $f(x)=0 \Leftrightarrow y_n=0$,
\item $q(y,\eta)=\eta_n^2+r(y,\eta')  \textrm{ if }\eta=(\eta',\eta_n)$,
 \end{description}
with $q$ the symbol $p$ written in the new variables.
\newline Moreover, if $k$ a smooth function defined on a \nhd of $0$ in the submanifold $S:=f^{-1}(\{0\})$ such that $k(0)=0$ and $dk(0)\in T_0^* S\setminus \{0\}$ 
we can choose the new variables $(y',\eta')$ such that
\begin{description}
\item $ (k(x)>0\textrm{ on } f(x)=0\Leftrightarrow y_1>0 \textrm{ on }y_n=0)$ and $(k(x)=0\textrm{ on } f(x)=0\Leftrightarrow y_1=0 \textrm{ on }y_n=0)$,
\item $r(0,\eta')=|\eta'|^2$.
 \end{description}
\end{lemma}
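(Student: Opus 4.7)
The plan is to use geodesic normal (Fermi) coordinates around the hypersurface $S := \{f=0\}$ based on the Riemannian metric $g$ dual to the symbol $p$. Concretely, let $g$ be the Riemannian metric on $U$ whose cometric on $T^*U$ is $p(x,\xi)$ (this is well defined and smooth since $p$ is $\Cinf$ and positive definite). Since $df(0)\neq 0$, $S$ is a smooth hypersurface through $0$. I set $y_n$ to be the signed $g$-distance to $S$, with sign chosen positive where $f>0$. Then $y_n$ is smooth in a neighborhood of $0$, satisfies $y_n=0\Leftrightarrow f=0$ and $y_n>0\Leftrightarrow f>0$, and obeys the eikonal equation $p(x,dy_n)\equiv 1$ near $S$.

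Second, I pick smooth coordinates $(y_1,\dots,y_{n-1})$ on $S$ near $0$ (subject to extra constraints for the ``moreover'' part, addressed below) and extend them to a neighborhood of $S$ in $U$ by declaring them to be constant along the unit-speed $g$-geodesics issuing orthogonally from $S$. This is the standard Fermi coordinate construction; the tubular neighborhood theorem ensures $(y_1,\dots,y_n)$ form a smooth local chart near $0$. By the Gauss lemma, in these coordinates the metric has no cross-terms between $dy_n$ and $dy^\alpha$, and the $\partial_{y_n}$-component has unit length, so
\[
g = dy_n^2 + h_{\alpha\beta}(y)\,dy^\alpha dy^\beta,
\]
with $h$ a smoothly varying positive definite metric on the tangential directions. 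Dualizing, the symbol of $p$ written in the new variables becomes $q(y,\eta) = \eta_n^2 + r(y,\eta')$ with $r(y,\eta') = h^{\alpha\beta}(y)\eta_\alpha\eta_\beta$, which establishes the first part of the lemma.

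For the refined statement involving $k$: the sign condition $y_1>0 \Leftrightarrow k>0$ on $S$ forces $\{y_1=0\}=\{k=0\}$ near $0$ in $S$, so necessarily $y_1 = \phi\cdot k$ with $\phi$ smooth and positive. Since $dk(0)\neq 0$ in $T_0^*S$, the set $\Sigma:=\{k=0\}\cap S$ is a smooth codimension-$1$ submanifold of $S$ through $0$. I choose local coordinates $(\tilde y_2,\dots,\tilde y_{n-1})$ on $\Sigma$ near $0$, and use $(k,\tilde y_2,\dots,\tilde y_{n-1})$ as coordinates on $S$ near $0$ (via the implicit function theorem). Then I apply a linear change: set $y_1 := c\,k$ with $c := |dk(0)|_{r(0,\cdot)}^{-1}$ so that $|dy_1(0)|_{r} = 1$ and the sign condition is preserved; and pick $y_\alpha$ ($\alpha=2,\dots,n-1$) as linear combinations of the $\tilde y_\beta$'s by Gram--Schmidt so that $(dy_1(0),\dots,dy_{n-1}(0))$ is $r(0,\cdot)$-orthonormal in $T_0^*S$. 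This yields $h_{\alpha\beta}(0)=\delta_{\alpha\beta}$, equivalently $r(0,\eta')=|\eta'|^2$. I then extend these tangential coordinates off $S$ by the Fermi construction as in the previous paragraph; this does not affect their values or differentials at the origin, so both the sign condition and the normalization $r(0,\eta')=|\eta'|^2$ survive.

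The only non-elementary step is the standard Riemannian fact that Fermi coordinates diagonalize the metric as $dy_n^2 + h(y)$, which I shall invoke without reproving; the remainder is the implicit function theorem, Gram--Schmidt orthonormalization, and elementary sign/scaling manipulations, so I do not anticipate any serious obstacle.
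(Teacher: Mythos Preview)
Your argument is correct, and for the first half it coincides with the paper's: the paper simply cites H\"ormander's Corollary~C.5.3 for the block form $\eta_n^2+r(y,\eta')$, which is precisely the Fermi/geodesic-normal-coordinate construction you spell out via the eikonal equation and the Gauss lemma.

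For the ``moreover'' part the two arguments diverge mildly. The paper \emph{iterates} the same construction: it applies the normal-form lemma once more inside $S$, now with the induced metric $r(\cdot,0,\cdot)$ and the function $k$, obtaining coordinates $(z_1,z'')$ on $S$ in which $r(z',0,\zeta')=\zeta_1^2+s_1(z',\zeta'')$ with the $\zeta_1^2$ coefficient equal to $1$ on all of $S$; only then does it perform a linear change in $z''$ to force $s_1(0,\zeta'')=|\zeta''|^2$. You instead normalize only at the single point $0$, by rescaling $k$ and running Gram--Schmidt. Both routes yield the required conclusion $r(0,\eta')=|\eta'|^2$; the paper's intermediate form is slightly stronger but that extra strength is not used anywhere.

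One small fix is needed in your Gram--Schmidt step: as written, you take $y_\alpha$ ($\alpha\ge 2$) to be linear combinations of the $\tilde y_\beta$'s alone, but this cannot in general make $dy_1(0)=c\,dk(0)$ orthogonal to the $dy_\alpha(0)$'s, since $dk(0)$ need not be orthogonal to $\mathrm{span}\{d\tilde y_\beta(0)\}$. Allow $y_\alpha$ to be a linear combination of $y_1,\tilde y_2,\dots,\tilde y_{n-1}$ (ordinary Gram--Schmidt starting from the already-normalized $y_1$); this leaves $y_1=c\,k$ untouched, hence preserves the sign condition, and repairs the gap.
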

\begin{proof}
It is classical (see \cite[Corollary C.5.3]{Hoermander:V3} for instance) that we can find change of variables such that 
$$(f(x)>0 \Leftrightarrow y_n>0), \quad (f(x)=0 \Leftrightarrow y_n=0)\text{ and } q(y,\eta)= \eta_n^2+r(y,\eta')$$
 with $r$ a homogeneous polynomial of order 2 in $\eta'$.
Moreover, using Taylor formula, one may write $$r(y,\eta')=r(y',0,\eta')+y_n r_1(y,\eta').$$
By the same method, we can choose variables $z'=(z_1,z'')$ on $S$ such that, denoting $s$ the function $r$ written in the variables $(z',\zeta')$,
$$ (k(y')>0\Leftrightarrow z_1>0),\quad (k(y')=0\Leftrightarrow z_1=0)\text{ and } s(z',0,\zeta')=\zeta_1^2+s_1(z',\zeta''),$$
where we have set $\zeta'=(\zeta_1,\zeta'')$. 
\newline Finally, by a linear change of variables in $z''$ (which does not pertub the other term), we can write $s_1(0,\zeta'')=|\zeta''|^2$ and consequently
get
$$s(0,\zeta')=|\zeta'|^2.$$
\end{proof}

\begin{remark}
$ $\newline 
\begin{itemize}
 \item A $\Cinf$ positive definite quadratic form in $T^*(U)$ is a $\Cinf$ map such that for all $x\in U$, $p(x,.)$ is a positive definite quadratic form.
\item We note that this Lemma takes a more standard form if $k$ can be defined in a \nhd of $0\in \R^n$. Indeed, one has the following result. 
\par
\par \noindent If $f$ and $k$ are smooth function defined in a neighborhood of $0$, satisfying $f(0)=k(0)=0$ and such that $df(0)$ and $dk(0)$ are independent, then there exists
a change of variables such that, in the new variable $(y,\eta)$ and locally near $0$, 
\begin{description}
 \item[]$f(x)>0 \Leftrightarrow y_n>0$,
\item $ k(x)>0\textrm{ on } f(x)=0\Leftrightarrow y_1>0 \textrm{ on }y_n=0$,
\item $q(y,\eta)=\eta_n^2+r(y,\eta')  \textrm{ where }\eta=(\eta',\eta_n)$,
\item $r(0,\eta')=|\eta'|^2$.
 \end{description}
\end{itemize}
\end{remark}

\section{A norm estimate}
\label{Appendix C}
\begin{lemma}\label{holderpetit}
Let $\rho \in (0,1)$, $a\in \Con^\rho(\R^{n-1})$ such that $a_{|x_1=0}=0$ and $\chi$ a smooth function supported in the unit ball $B_1$. Then there exists $C>0$ such that, for any $\rho'\le \rho$ and any $\lambda\in(0,1)$,
$$\|a \chi(./\lambda)\|_{\Con^{\rho'}} \le C \lambda^{\rho-\rho'} \|a\|_{\Con^{\rho}}$$
where, for any $x\in \R^{n-1}$, $\chi(./\lambda)(x)=\chi(x/\lambda).$
\end{lemma}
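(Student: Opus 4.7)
The plan is entirely elementary: split the Hölder norm into its $L^\infty$ part and its seminorm part, and in each case exploit that the cutoff is supported in $B_\lambda$ together with the fact that $a$ vanishes on $\{x_1=0\}$. The key quantitative input is the pointwise bound $|a(x)|\le [a]_{\Con^\rho}|x_1|^\rho\le [a]_{\Con^\rho}\lambda^\rho$ for $|x|\le \lambda$, which follows from applying the Hölder estimate between $x$ and $(0,x_2,\dots,x_{n-1})$.

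For the $L^\infty$ part, $a\chi(\cdot/\lambda)$ is supported in $B_\lambda$, so the bound above gives $\|a\chi(\cdot/\lambda)\|_\infty \le C\lambda^\rho\|a\|_{\Con^\rho}\le C\lambda^{\rho-\rho'}\|a\|_{\Con^\rho}$ since $\lambda\le1$.

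For the seminorm $[\,a\chi(\cdot/\lambda)\,]_{\Con^{\rho'}}$, I would distinguish two ranges of $|x-y|$. If $|x-y|\ge \lambda$, use $|(a\chi(\cdot/\lambda))(x)-(a\chi(\cdot/\lambda))(y)|\le 2\|a\chi(\cdot/\lambda)\|_\infty\le C\lambda^\rho\|a\|_{\Con^\rho}$ and divide by $|x-y|^{\rho'}\ge \lambda^{\rho'}$. If $|x-y|<\lambda$, write
\[
(a\chi(\cdot/\lambda))(x)-(a\chi(\cdot/\lambda))(y) = \bigl(a(x)-a(y)\bigr)\chi(x/\lambda) + a(y)\bigl(\chi(x/\lambda)-\chi(y/\lambda)\bigr).
\]
The first term is controlled by $[a]_{\Con^\rho}|x-y|^\rho\|\chi\|_\infty$, and $|x-y|^{\rho-\rho'}\le\lambda^{\rho-\rho'}$. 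For the second term, we may assume (otherwise it vanishes) that $x$ or $y$ lies in $B_\lambda$; by $|x-y|<\lambda$ this forces $|y|\le 2\lambda$, whence $|a(y)|\le C[a]_{\Con^\rho}\lambda^\rho$, while the mean value theorem gives $|\chi(x/\lambda)-\chi(y/\lambda)|\le \|\nabla\chi\|_\infty |x-y|/\lambda$. Multiplying, this term is bounded by $C[a]_{\Con^\rho}\lambda^{\rho-1}|x-y|^{1-\rho'}|x-y|^{\rho'}$, and again $|x-y|^{1-\rho'}\le \lambda^{1-\rho'}$ yields the required factor $\lambda^{\rho-\rho'}$.

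There is no genuine obstacle here; the only point to watch is the second case when $|x-y|<\lambda$, where one must verify that if $\chi(x/\lambda)-\chi(y/\lambda)\ne 0$, then both $x$ and $y$ are automatically within $2\lambda$ of the origin so the pointwise bound $|a(y)|\le C\lambda^\rho\|a\|_{\Con^\rho}$ still applies. Combining the two seminorm estimates with the sup-norm bound gives $\|a\chi(\cdot/\lambda)\|_{\Con^{\rho'}}\le C\lambda^{\rho-\rho'}\|a\|_{\Con^\rho}$, as claimed.
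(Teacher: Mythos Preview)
Your proof is correct and uses the same elementary ingredients as the paper's own argument: the pointwise bound $|a(x)|\le C\lambda^\rho\|a\|_{\Con^\rho}$ on the support of the cutoff coming from $a_{|x_1=0}=0$, the product decomposition of the increment, and the mean value inequality for $\chi$. The only cosmetic difference is the case splitting: the paper organizes the seminorm estimate according to whether $x$, $y$ lie in $B_\lambda$ (handling the mixed case by picking the point $t\in[x,y]$ with $|t|=\lambda$), whereas you split according to whether $|x-y|\gtrless\lambda$; both routes are equally short and lead to the same bound.
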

\begin{proof}
We first estimate $|(a\chi(./\lambda))(x)-(a\chi(./\lambda))(y)|$ depending on the positions of $x$ and $y$.
\begin{itemize}
\item If $x, y \notin B_\lambda$, one clearly has
$$|(a\chi(./\lambda))(x)-(a\chi(./\lambda))(y)|\le C \lambda ^{\rho-\rho'}  \|a\|_{\Con^\rho} \ |x-y|^{\rho'}.$$
\item If $x, y \in B_\lambda$, one has
$$
(a\chi(./\lambda))(x)-(a\chi(./\lambda))(y)=a(x)\left( \chi\left(\frac{x}{\lambda}\right)-\chi\left(\frac{y}{\lambda}\right)\right)+(a(x)-a(y)) \chi\left(\frac{y}{\lambda}\right) 
$$
which gives, since $|x-y|\le 2 \lambda$ and $\chi$ is smooth, 
\begin{eqnarray*}
|(a\chi(./\lambda))(x)-(a\chi(./\lambda))(y)| &\le& C \left(|a(x)|\frac{|x-y|}{\lambda} +\|a\|_{\Con^\rho} |x-y|^\rho\right)\\
& \le& C \left(|a(x)| \frac{|x-y|^{\rho'}}{\lambda^{\rho'}}  +\lambda ^{\rho-\rho'} \|a\|_{\Con^\rho}  |x-y|^{\rho'}\right).
\end{eqnarray*}
Using now that $a_{|x_1=0}=0$, one has moreover
$$|a(x)| \le |x_1|^\rho \|a\|_{\Con\rho} \  \le C \lambda^\rho\ \|a\|_{\Con^\rho}$$
and finally gets
$$\forall x, y \in B_\lambda, \  |(a\chi(./\lambda))(x)-(a\chi(./\lambda))(y)|\le C \lambda ^{\rho-\rho'}  \|a\|_{\Con^\rho} \ |x-y|^{\rho'}.$$
\item If now $x \in B_\lambda$, $y\notin B_\lambda$, one has 
$$|(a\chi(./\lambda))(x)-(a\chi(./\lambda))(y)|=|(a\chi(./\lambda))(x)|=|(a\chi(./\lambda))(x)-(a\chi(./\lambda))(t)|$$
where $t \in [x,y]$ and $|t|=\lambda$. Using the second case, we also get
$$|(a\chi(./\lambda))(x)-(a\chi(./\lambda))(y)| \le C \lambda ^{\rho-\rho'}  \|a\|_{\Con^\rho} \ |x-y|^{\rho'}$$
since $|x-t|\le |x-y|$.
\end{itemize}
Finally, it is obvious that
$$\|a \chi(./\lambda)\|_{L^\infty}\le \|\chi\|_{L^{\infty}} \|a\|_{L^\infty(B_\lambda)} \le C \|a\|_{L^\infty(B_\lambda)}$$
and since, for any $ x\in B_\lambda$, $ |a(x)| \le C \lambda ^{\rho}  \|a\|_{\Con^\rho} $,
$$\|a \chi(./\lambda)\|_{L^\infty}\le C \lambda^{\rho-\rho'}  \|a\|_{\Con^\rho}.$$
The proof is complete.
\end{proof}

\end{document}